\newtheorem{theorem}{Theorem}[section]
\newtheorem{lemma}[theorem]{Lemma}
\newtheorem{proposition}[theorem]{Proposition}
\newtheorem{corollary}[theorem]{Corollary}
\theoremstyle{definition}
\theoremstyle{remark}
\newtheorem{remark}[theorem]{Remark}
\numberwithin{equation}{section}
\DeclareMathOperator{\dive}{div}
\DeclareMathOperator{\curl}{curl}
\begin{document}

\title[Two-parameter relaxation of the incompressible Navier-Stokes equations]{Convergence of a two-parameter hyperbolic relaxation system toward the incompressible Navier-Stokes equations}



\author[Qian Huang]{Qian Huang}
\address{Institute of Applied Analysis and Numerical Simulation, University of Stuttgart, 70569 Stuttgart, Germany}
\email{qian.huang@mathematik.uni-stuttgart.de; hqqh91@qq.com}

\author[Christian Rohde]{Christian Rohde}
\address{Institute of Applied Analysis and Numerical Simulation, University of Stuttgart, 70569 Stuttgart, Germany}
\email{christian.rohde@mathematik.uni-stuttgart.de}

\author[Ruixi Zhang]{Ruixi Zhang}
\address{Department of Mathematical Sciences, Tsinghua University, Beijing 100084, China}
\email{zhangrx24@mails.tsinghua.edu.cn; 1553548358@qq.com}

\subjclass[2020]{Primary 35Q30 · 76D05}

\keywords{Incompressible Navier-Stokes equations, relaxation approximations, artificial compressibility, hyperbolic singular perturbations, energy estimates, modulated energy}

\date{}

\dedicatory{}

\begin{abstract}
We investigate a two-parameter hyperbolic relaxation approximation to the incompressible Navier-Stokes equations, incorporating a first-order relaxation and the artificial compressibility method. With vanishingly small perturbations of initial velocity, we rigorously prove the simultaneous convergence of fluid velocity and pressure toward the Navier-Stokes limit in the three-dimensional case by constructing an intermediate affine system to obtain the necessary error estimates for the pressure. Furthermore, we extend the velocity convergence analysis to the case of $\mathcal O(1)$ initial velocity perturbations, and establish the global-in-time recovery of the velocity field using a modulated energy structure and delicate bootstrap arguments in both two- and three-dimensional settings.
\end{abstract}

\maketitle

\section{Introduction}

Consider the incompressible Navier-Stokes equations
\begin{subequations} \label{eq:ns}
\begin{empheq}[left=\empheqlbrace]{align}
  &\nabla\cdot u = 0, \label{eq:nsa} \\
  &\partial_t u + \nabla \cdot (u \otimes u) = -\nabla p + \Delta u, \label{eq:nsb} \\
  &u(0,\cdot) =u_0 \label{eq:nsc}
\end{empheq}
\end{subequations}
for $(t,x)\in [0,T_0]\times \mathbb T^d$, where $T_0\in (0,\infty]$ denotes the lifespan of the solution and $\mathbb T^d=\mathbb R^d / \mathbb Z^d$ ($d=2,3$) is the $d$-dimensional unit periodic square. The unknowns $u(t,x) \in \mathbb{R}^d$ and $p(t,x) \in \mathbb{R}$ represent the velocity field and the scalar pressure of the fluid, respectively. We impose the condition $\int_{\mathbb{T}^d} p(t,x) \, dx = 0$ for all $t$ to ensure the uniqueness of the pressure. The viscosity coefficient is set as unity. By introducing the stress-like tensor $U = u\otimes u -\nabla u \in \mathbb R^{d\times d}$, the momentum equation can be rewritten as
\[
  \partial_t u + \nabla \cdot U + \nabla p = 0.
\]
For convenience, let $p_0=p(0,\cdot)$ and $U_0=U(0,\cdot)$. In the two-dimensional case $\mathbb T^2$, it is a classical result that a unique smooth solution exists globally for any smooth initial data \cite{kato1972,ladyzhenskaya1969}. In contrast, for {$\mathbb T^3$}, while the global existence of Leray-Hopf weak solutions is well established \cite{leray1934}, only the local existence of smooth solutions on a finite time interval $[0, T_0]$ is guaranteed for arbitrary smooth $u_0$ in $\mathbb{T}^3$. Currently, the global existence of smooth solutions in 3D ($T_0=\infty$) can only be ensured with sufficiently small initial data, either in smooth settings or in critical spaces such as $H^{1/2}(\mathbb{T}^3)$ or $L^3(\mathbb{T}^3)$ \cite{Fujita1964,kato1984}. 

Driven by the foundational role of (\ref{eq:ns}) in fluid mechanics, various approximation strategies have been developed to address its inherent mathematical challenges. A significant paradigm involves the hydrodynamic limit of kinetic equations, where (\ref{eq:ns}) is recovered from the Boltzmann equation under diffusive scaling \cite{de1989CPAM, golse2004}. This theoretical bridge motivates the use of relaxation models, such as velocity-discrete BGK (Bhatnagar-Gross-Krook) systems \cite{bianchini2019,JY2003}, which effectively replace the global constraint of the pressure with localized evolution laws \cite{aregba2000,jinxin1995}.

Within this framework, we focus on a new approximation to (\ref{eq:ns}) on $(t,x)\in [0,T_{\epsilon,\delta})\times \mathbb T^d$, proposed in \cite{HRYZ25} as a first-order hyperbolic relaxation system:
\begin{subequations} \label{eq:relax}
\begin{empheq}[left=\empheqlbrace]{align}
  &\epsilon \partial_t p^{\epsilon,\delta} + \nabla \cdot u^{\epsilon,\delta} =0, \label{eq:relax_a} \\
  &\partial_t u^{\epsilon,\delta} + \nabla \cdot U^{\epsilon,\delta} + \nabla p^{\epsilon,\delta} = 0, \label{eq:relax_b} \\
  &\delta \partial_t U^{\epsilon,\delta} + \nabla u^{\epsilon,\delta} = u^{\epsilon,\delta}\otimes u^{\epsilon,\delta} - U^{\epsilon,\delta}, \label{eq:relax_c} \\
  &u^{\epsilon,\delta}(0,\cdot)=u^{\epsilon,\delta}_0, \ p^{\epsilon,\delta}(0,\cdot)=p^{\epsilon,\delta}_0, \ U^{\epsilon,\delta}(0,\cdot)=U^{\epsilon,\delta}_0,
\end{empheq}
\end{subequations}
where $\epsilon,\delta>0$ are two small parameters. Here $T_{\epsilon,\delta}$ denotes the lifespan of the solution $(p^{\epsilon,\delta},u^{\epsilon,\delta},U^{\epsilon,\delta})\in \mathbb R\times \mathbb R^d \times \mathbb R^{d\times d}$. Clearly, (\ref{eq:relax}) formally recovers (\ref{eq:ns}) as both parameters $\epsilon$ and $\delta$ vanish. 
This formulation can be viewed as a synergistic extension of two classical approximation paradigms. On one hand, it incorporates the artificial compressibility (AC) method \cite{chorin1967jcp,chorin1967ams,chorin1968,temam1969I,temam1969II}, which relaxes the rigid divergence-free constraint through the evolution equation (\ref{eq:relax_a}), thereby circumventing the computational burden associated with the global Poisson equation for pressure. On the other hand, the stress-like tensor $U^{\epsilon,\delta}$ follows the relaxation dynamics (\ref{eq:relax_c}), a structure inspired by the first-order model proposed in \cite{Brenier2003} via the diffusive scaling:
\begin{equation} \label{eq:relax_para}
\left\{ \
\begin{aligned}
  &\nabla\cdot u^\delta = 0, \\
  &\partial_t u^\delta + \nabla \cdot U^\delta + \nabla p^\delta = 0, \\
  &\delta \partial_t U^\delta + \nabla u^\delta = u^\delta\otimes u^\delta - U^\delta, \\
  &u^\delta(0,\cdot)=u_0^\delta, \ U^\delta(0,\cdot) = U_0^\delta,
\end{aligned}
\right.
\end{equation}
with $\delta>0$ and $U^\delta: (t,x) \mapsto \mathbb R^{2\times 2}$. 
It is worth mentioning that the convergence of $u^\delta$ in (\ref{eq:relax_para}) toward a solution $u$ of (\ref{eq:ns}) has been established in various functional settings \cite{Brenier2003,hachicha2014, ilyin2018,paicu2007,racke2012}. 
By integrating these two approaches, the fully hyperbolic system (\ref{eq:relax}) ensures finite propagation speeds for all physical disturbances, aligning more closely with the principles of causality. It also becomes compatible with the rich library of numerical schemes for hyperbolic conservation systems, thus benefiting multiple downstream tasks like statistics of turbulent flows \cite{friedrich2012,Lundgren}.

The purpose of this work is to provide rigorous convergence analyses of the two-parameter approximation system (\ref{eq:relax}) toward the Navier-Stokes system (\ref{eq:ns}) on $\mathbb T^d$ ($d=2,3$). Such a theoretical foundation is indispensable for validating the application of the model in numerical simulations. Note that the existence of the solutions to (\ref{eq:relax}) is guaranteed by the well-established theory \cite{kato1975}. While one-parameter limits for relaxation or artificial compressibility models have been extensively documented \cite{klainerman1981, temam2001ns, Yong1999}, the simultaneous two-parameter limit $(\epsilon, \delta) \to 0$ for \eqref{eq:relax} remains far less explored. We refer to \cite{brenier2005,xu2009,Yong2001} for some related works on two-parameter limit problems.

For the analysis of (\ref{eq:relax}), the first technical challenge is to derive a convergence result for the pressure, which requires a simultaneous control of errors for both velocity and pressure. While it seems natural to pass to the limit in a way as $(u, p)^{\epsilon, \delta} \to (u, p)^{\epsilon \text{ or } \delta} \to (u, p)$, this sequential approach turns out to be unfeasible: Existing studies on (\ref{eq:relax_para}) never show the convergence of $p^\delta$, hence providing no useful error rates for ``dropping $\delta$” \cite{Brenier2003,hachicha2014,paicu2007}; For the analysis of AC in the limit $\epsilon\to 0$, the convergence of pressure can be ensured, but it generally lacks the quantitative error bounds essential for our convergence analysis \cite{donatelli2010,kagei2021,temam2001ns}. We recently overcome this barrier, for the two-dimensional case, by introducing an intermediate affine system \cite{HRYZ25}. In this novel approach, the discrepancies of the auxiliary system with (\ref{eq:ns}) and (\ref{eq:relax}) can both be bounded, thereby leading to convergence results of the pressure. Our first observation of the present work is that this methodology can be effectively extended to the three-dimensional setting $\mathbb{T}^3$. Although the transition to 3D introduces additional complexity, most notably that the vorticity becomes a vector field, the basic energy-based techniques remain applicable.

However, all the existing results in \cite{HRYZ25} require vanishingly small amplitude of initial velocity perturbations $|u^{\epsilon,\delta}_0-u_0|$, typically of order $\mathcal O((\epsilon+\delta)^\alpha)$ with $\alpha>0$. A more fundamental challenge arises when the initial velocity perturbation is of order $\mathcal O(1)$, a regime that is crucial for ensuring the robustness of model (\ref{eq:relax}) against significant initial inconsistencies often encountered in numerical start-ups. The transition from $o(1)$ to $\mathcal O(1)$ perturbations renders the conventional energy-based arguments insufficient. The energy estimates in $H^s$ ($s\ge 2$), leveraging the symmetric hyperbolic structure of (\ref{eq:relax}), typically yield a guaranteed existence time that scales inversely with the $L^\infty$-norm of the initial perturbation. Consequently, unlike the small-perturbation case where the lifespan can be extended to match $T_0$ of (\ref{eq:ns}) (as revealed for $\mathbb{T}^3$ in the results of Section~\ref{subsec:main_small}), the $\mathcal O(1)$ perturbation limits the convergence to a significantly shorter time interval. This limitation stems from the fact that the dissipation in (\ref{eq:relax}) is solely provided by the damping of $U^{\epsilon,\delta}$. Since the relation between $U^{\epsilon,\delta}$ and $u^{\epsilon,\delta}$ is only recovered in the limit, this damping mechanism is not powerful enough to suppress nonlinear effects.

To fill the gap in the $\mathcal O(1)$ regime, we develop a refined analysis by invoking the modulated energy functional introduced in \cite{Brenier2003}. This approach provides a robust dissipation mechanism involving $\nabla \omega^{\epsilon,\delta}$ (with $\omega^{\epsilon,\delta} = \curl u^{\epsilon,\delta}$), which remains effective in both two and three dimensions. While the $L^2$-convergence of the velocity can be managed through the more standard energy estimates, the control of the curl part becomes the decisive factor for establishing higher-order convergence. However, the presence of the small compressibility in our system (\ref{eq:relax_a}) introduces an additional layer of complexity that is absent in the original divergence-free model (\ref{eq:relax_para}) in \cite{Brenier2003}. To resolve this issue, we apply the natural Hodge-type decomposition and implement a carefully-designed bootstrap argument that works effectively for the regime where $\epsilon = o(\delta)$ and the initial divergence of velocity is sufficiently small relative to the initial curl. This allows us to rigorously establish the global-in-time velocity convergence in the $\mathcal O(1)$ regime over the entire lifespan $T_0$ of the limit solution. Specifically, for any $T < T_0$ (including $T_0 = \infty$), $u^{\epsilon,\delta}$ exists and approaches $u$ on $[0, T]$ as $\epsilon$ and $\delta$ vanish under the appropriate scaling. Notably, we reveal a structural richness on $\mathbb T^3$ by identifying two distinct ``extreme cases" for the convergence rates, while for $\mathbb T^2$, only one such construction exists.

The remainder of the paper is organized as follows. The main results and preparations are collected in Section~\ref{sec:main}. Section~\ref{sec:small} presents the proofs of Theorem~\ref{thm:uconverg_H1}, Corollary~\ref{cor:convergence} and Theorem~\ref{thm:pressure_3d} in the vanishing-initial-perturbation regime on $\mathbb T^3$. Section~\ref{sec:large} is devoted to the proofs of Theorems~\ref{thm:critical_3d}, \ref{thm:critical_3d_inhomo} and \ref{thm:critical_2d} in the more challenging, $\mathcal O(1)$-initial-perturbation regime. Conclusions are drawn in Section \ref{sec:concl}.

\textbf{Notations.} All positive constants, like $C$, $C'$, $C_T$, etc, will be clarified in each statement and proof on their dependence and whether they are generic or fixed. In any case, these constants do not depend on $(\epsilon,\delta)$. The standard $L^p$- ($p=2,3,4,6,\infty$) and $H^s$- ($s=1,2,3$) norms are both for $\mathbb T^d$ (the value of $d$ is specified in each result and proof). We adopt the following shorthand and norm definitions for the continuous-time Sobolev spaces $C^k H^s = C^k([0,T]; H^s(\mathbb{T}^d))$:
\[
    \|u\|_{C^0 H^s} := \sup_{t \in [0, T]} \|u(t, \cdot)\|_{H^s}, \quad
    \|u\|_{C^0 H^s \cap C^1 H^{s-1}}^2 := \sup_{t \in [0, T]} \left( \|u(t, \cdot)\|_{H^s}^2 + \|\partial_t u(t, \cdot)\|_{H^{s-1}}^2 \right).
\]
For the differential operators, we denote the gradient of a vector field $u = (u_1, \dots, u_d)^\top$ by the matrix $(\nabla u)_{ij} = \partial_{x_j} u_i$, and $\nabla^2 u$ represents the tensor of all second-order partial derivatives. The divergence of a vector field is defined as $\text{div} \, u = \nabla \cdot u = \sum_{i=1}^d \partial_{x_i} u_i$, and $\Delta = \sum_{i=1}^d \partial_{x_i}^2$ denotes the Laplacian. For $d=3$, $\text{curl} \, u = \nabla \times u$, while for $d=2$, it is understood as the scalar $\partial_{x_1} u_2 - \partial_{x_2} u_1$. For any two matrices $A, B \in \mathbb{R}^{d \times d}$, $A \otimes B$ denotes the tensor product, and $A : B = \sum_{i,j} A_{ij} B_{ij}$ represents the Frobenius inner product. Finally, we use the standard Landau notation $f = \mathcal{O}(g)$ to denote that $|f| \leq C|g|$ as the parameters $\epsilon, \delta$ vanish, where $C>0$ is a generic constant independent of $(\epsilon,\delta)$.

\section{Main results} \label{sec:main}
In this section, we present our main results. It is convenient to consider a smooth solution $(p,u,U)$ of (\ref{eq:ns}) that exists on $[0,T_0]$ (with $T_0\le \infty$). Let 
$(u^{\epsilon,\delta},p^{\epsilon,\delta},U^{\epsilon,\delta})$ be a series of solutions of the initial-value problem (\ref{eq:relax}) on $[0,T_{\epsilon,\delta})\times \mathbb T^d$ with smooth initial data $(u_0^{\epsilon,\delta},p_0^{\epsilon,\delta},U_0^{\epsilon,\delta})$, indexed with sufficiently small $\epsilon$ and $\delta$. Our main goal is to prove convergence results $u^{\epsilon,\delta}\to u$ as $\epsilon,\delta\to 0$, in some proper sense, under perturbations of initial data. Most of the results are derived for three-dimensional flows on $\mathbb T^3$, and only Theorem~\ref{thm:critical_2d} is stated for two-dimensional flows on $\mathbb T^2$.

\subsection{Vanishing amplitude of initial velocity perturbations on $\mathbb T^3$} \label{subsec:main_small}

Consider the regime where
\begin{equation} \label{eq:small_Linf} 
    \|u_0^{\epsilon,\delta}-u_0 \|_{L^\infty(\mathbb T^3)}=\mathcal O((\epsilon+\delta)^\alpha) \quad \text{with} \quad \alpha>0.
\end{equation}
In this case, we can verify the (uniformly) local existence of $u^{\epsilon,\delta}$ on $[0,T]$, with $T>0$ independent of $\epsilon,\delta$, and the convergence $u^{\epsilon,\delta}\to u$ in $L^\infty({[0,T]};H^1(\mathbb T^3))$. Precisely, we have the following.

\begin{theorem} \label{thm:uconverg_H1}
    Assume that there exist constants $a \in [0,2)$ and $C_0>0$, both independent of $\epsilon$ and $\delta$, such that
    \begin{subequations} \label{eq:uH1_assump}
    \begin{align}
        &{\| u_0^{\epsilon,\delta}-u_0\|}_{H^1(\mathbb T^3)}^2 + (\epsilon+\delta)^a \left\|\nabla^2 \left(u_0^{\epsilon,\delta}-u_0\right)\right\|_{L^2(\mathbb T^3)}^2 \le C_0(\epsilon+\delta), \label{eq:H1_assump_u} \\
        &\left\| \left(p_0^{\epsilon,\delta},U_0^{\epsilon,\delta}\right) \right\|_{H^1(\mathbb T^3)}^2 + (\epsilon+\delta)^a \left\| \nabla^2\left(p_0^{\epsilon,\delta},U_0^{\epsilon,\delta}\right) \right\|_{L^2(\mathbb T^3)}^2 \le C_0.
    \end{align}
    \end{subequations}
    Then, if $T_0<\infty$, the solution $(u^{\epsilon,\delta},p^{\epsilon,\delta},U^{\epsilon,\delta})$ of the initial value problem for \eqref{eq:relax} exists on $[0,T_0]$ for sufficiently small $\epsilon$ and $\delta$.
    Otherwise, if $T_0=\infty$, the existing time $T_{\epsilon,\delta}$ is uniformly bounded from below and satisfies
    \begin{equation} \label{eq:Texist}
        T_{\epsilon,\delta} \ge C'(2-a)|\log(\epsilon+\delta)| \to \infty \quad \text{as} 
        \quad \epsilon+\delta\to 0,
    \end{equation}
    where $C'$ depends only on $C_0$ and the smooth solution $(p,u,U)$. 
    Moreover, for any $T>0$ such that both $u^{\epsilon,\delta}$ and $u$ exist on $[0,T]$, we have
    \begin{equation} \label{eq:u_converg}
        \sup_{t\in[0,T]} {\| u^{\epsilon,\delta}(t,\cdot)-u(t,\cdot)\|}_{H^1(\mathbb T^3)}^2 \le C_T(\epsilon+\delta)
    \end{equation}
    and
    \begin{equation} \label{eq:u_Linf_bound}
       \sup_{t\in[0,T]} \| u^{\epsilon,\delta}(t,\cdot)\|_{L^\infty(\mathbb T^3)} \le C_T    
    \end{equation}
    for sufficiently small $\epsilon+\delta \ll 1$, where $C_T$ depends only on $C_0$, $T$ and $(p,u,U)$.
\end{theorem}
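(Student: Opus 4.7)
The plan is to work on the error system for $(w, q, V) := (u^{\epsilon,\delta} - u,\, p^{\epsilon,\delta} - p,\, U^{\epsilon,\delta} - U)$, where $(p, U)$ is reconstructed from the smooth Navier-Stokes solution via $U := u \otimes u - \nabla u$. Substituting the triple $(p,u,U)$ into \eqref{eq:relax} produces exact error equations whose only inhomogeneities are the consistency residuals $-\epsilon\,\partial_t p$ in \eqref{eq:relax_a} and $-\delta\,\partial_t U$ in \eqref{eq:relax_c}, both of size $\mathcal O(\epsilon + \delta)$ in every Sobolev norm controlled by $(p,u,U)$. The nonlinear part of the $V$-equation reduces to $w \otimes u + u \otimes w + w \otimes w$, linear in $w$ except for a single self-interaction.

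My next step is to run weighted energy estimates in $H^s(\mathbb T^3)$ for $s = 0, 1, 2$. Testing the error equations against $(q, w, V)$ and integrating by parts, the symmetric hyperbolic structure inherited from \eqref{eq:relax} annihilates the bilinear pairings $\int q\,\nabla\cdot w + \int w \cdot \nabla q$ and $\int w \cdot \nabla\cdot V + \int V : \nabla w$, leaving only the relaxation damping $\|V\|_{L^2}^2$. Applying the same identity to $\partial^\alpha(w, q, V)$ for $|\alpha| \leq 2$, controlling the commutators from $w \otimes u$ and $w \otimes w$ by Moser-type product estimates, and absorbing tangential-gradient contributions into the damping gives
\begin{equation*}
    \frac{1}{2}\frac{d}{dt}\mathcal E_s + \frac{1}{2}\|V\|_{H^s}^2 \leq K\bigl(1 + \|w\|_{L^\infty}^2\bigr)\mathcal E_s + C_\ast(\epsilon + \delta),
    \qquad \mathcal E_s := \epsilon\|q\|_{H^s}^2 + \|w\|_{H^s}^2 + \delta\|V\|_{H^s}^2,
\end{equation*}
where $K, C_\ast$ depend only on $(p,u,U)$. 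The hypothesis \eqref{eq:uH1_assump} translates into $\mathcal E_1(0) \leq C_0(\epsilon+\delta)$ together with $\mathcal E_2(0) \leq C_0(\epsilon+\delta)^{1-a}$; the latter is merely bounded, not small, as soon as $a \geq 1$.

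The decisive step is a continuation/bootstrap argument closing the a priori bound on $\|w\|_{L^\infty}$. Assuming $\|w\|_{L^\infty}\leq 1$ on some interval $[0, T^\ast]$, Gronwall yields $\mathcal E_1(t) \leq C(\epsilon+\delta)\,e^{2Kt}$ and $\mathcal E_2(t) \leq C(\epsilon+\delta)^{1-a}\,e^{2Kt}$. Combining the Sobolev embedding $H^{3/2+\sigma}(\mathbb T^3) \hookrightarrow L^\infty(\mathbb T^3)$ with the interpolation $\|w\|_{H^{3/2+\sigma}} \leq C_\sigma \|w\|_{H^1}^{1/2-\sigma}\|w\|_{H^2}^{1/2+\sigma}$ produces
\begin{equation*}
    \|w(t)\|_{L^\infty}^2 \leq C_\sigma\,(\epsilon+\delta)^{(2-a)/2 - a\sigma}\,e^{2K t}.
\end{equation*}
For $\sigma$ chosen small, the right-hand side is strictly below $1$ whenever $t \leq C'(2-a)|\log(\epsilon+\delta)|$ and $\epsilon+\delta$ is sufficiently small, which strictly improves the bootstrap hypothesis. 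The usual continuation argument then delivers \eqref{eq:Texist}; the $\mathcal E_1$-bound is exactly \eqref{eq:u_converg}, and $\|u^{\epsilon,\delta}\|_{L^\infty} \leq \|u\|_{L^\infty} + \|w\|_{L^\infty}$ gives \eqref{eq:u_Linf_bound}. In the case $T_0 < \infty$, the same bootstrap applied on $[0, T_0]$ closes for all sufficiently small $\epsilon+\delta$, because $C'(2-a)|\log(\epsilon+\delta)|$ eventually exceeds $T_0$.

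The principal difficulty is precisely this bootstrap. In three dimensions $H^2$ is critical for $L^\infty$, and $\mathcal E_2(0) \sim (\epsilon+\delta)^{1-a}$ is not small once $a \geq 1$, so one cannot close an energy estimate uniformly in time without extra care. The interpolation $\|w\|_{L^\infty}^2 \lesssim \mathcal E_1^{1/2}\,\mathcal E_2^{1/2}$ is what converts the imbalance between the genuinely small $\mathcal E_1$ and the merely bounded $\mathcal E_2$ into the logarithmically long existence time $\sim (2-a)|\log(\epsilon+\delta)|$ appearing in \eqref{eq:Texist}.
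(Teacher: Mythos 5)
Your proposal is correct and reaches the same conclusions, but it packages the energy analysis differently from the paper. You keep two separate energies $\mathcal E_1$ and $\mathcal E_2$ (the latter with initial size $(\epsilon+\delta)^{1-a}$) and close a bootstrap on $\|w\|_{L^\infty}\leq 1$, feeding the Gronwall-controlled $\mathcal E_1,\mathcal E_2$ back into the coefficient $K(1+\|w\|_{L^\infty}^2)$ via the interpolation $\|w\|_{L^\infty}^2 \lesssim \mathcal E_1^{1/2-\sigma}\mathcal E_2^{1/2+\sigma}$. The paper instead rescales the residuals to $W=(q,v,V)=(\sqrt\epsilon(p^{\epsilon,\delta}-p),\,u^{\epsilon,\delta}-u,\,\sqrt\delta(U^{\epsilon,\delta}-U))$ and works with the single weighted functional $E(t)=\|W\|_{H^1}^2+(\epsilon+\delta)^a\|\nabla^2 W\|_{L^2}^2$, which is $\mathcal O(\epsilon+\delta)$ at $t=0$ by construction; the cubic nonlinearity then enters as a coefficient $(\epsilon+\delta)^{-a/2}E^2$, producing a Riccati-type ODE that is integrated by separation of variables — no explicit bootstrap on $\|w\|_{L^\infty}$ is needed, because the $a$-dependent weight embeds the same interplay between the $H^1$ and $H^2$ levels into the ODE itself. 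Both routes extract the logarithmic existence time from the exponent imbalance $(2-a)/2>0$; the paper's choice is more compact and avoids the (harmless but avoidable) $\sigma$-loss you incur by interpolating through $H^{3/2+\sigma}$ rather than invoking Agmon's inequality $\|w\|_{L^\infty}\leq C\|w\|_{H^1}^{1/2}\|w\|_{H^2}^{1/2}$ directly (stated as \eqref{eq:interp_inf} in the paper). Two small caveats: your claim that $\mathcal E_2(0)$ is ``merely bounded'' for $a\geq 1$ understates the situation (it is unbounded for $a>1$, which is fine since you never need it small), and in deriving the $\mathcal E_2$ inequality you should note that terms like $\nabla^2 u\otimes w$ are controlled using the smoothness of $u$ (they give $K\|w\|_{H^1}^2\lesssim K\mathcal E_1$, not $K\mathcal E_2$, which is still harmless once $\mathcal E_1$ is estimated first).
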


The proof of Theorem~\ref{thm:uconverg_H1} is presented in Section~\ref{subsec:thm:uconverg_H1}, which will be heavily relied on the following interpolation inequalities on $\mathbb T^3$. These results are standard \cite{Adams2003,temam2001ns} and their proofs are omitted.

\begin{lemma} \label{lem:interp_ineq}
    Let  $w\in H^2(\mathbb T^3)$. Then there is a constant $C\ge 0$ that depends on the domain $\mathbb T^3$ only such that the inequalities 
    \begin{subequations}
    \begin{align}
        {\|w\|}_{L^3} &\le C \|w\|_{L^2}^{\frac{1}{2}} \|w\|_{H^1}^{\frac{1}{2}}, \label{eq:interp_L3} \\
        {\|w\|}_{L^4} &\le C \|w\|_{L^2}^{\frac{1}{4}} \| w\|_{H^1}^{\frac{3}{4}}, \\
        {\|w\|}_{L^\infty} &\le C \|w\|_{H^1}^{\frac{1}{2}} \|w\|_{H^2}^{\frac{1}{2}}, \label{eq:interp_inf}\\
        {\|\nabla w\|}_{L^2} &\le \|w\|_{L^2}^{\frac{1}{2}} \|\nabla^2 w\|_{L^2}^{\frac{1}{2}}, \label{eq:interp_nabla_L2} \\
        {\|\nabla w\|}_{L^4}^2 &\le C\|w\|_{L^\infty} \|\nabla^2 w\|_{L^2}, \label{eq:interp_d}
    \end{align}
    \end{subequations}
   hold.
\end{lemma}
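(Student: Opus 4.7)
The plan is to derive each of the five interpolation inequalities from three ingredients standard on the smooth closed manifold $\mathbb{T}^3$: the critical Sobolev embedding $H^1(\mathbb{T}^3)\hookrightarrow L^6(\mathbb{T}^3)$, Fourier series decomposition (Plancherel), and integration by parts (with no boundary terms).

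First I would dispatch (a) and (b). Starting from the $L^6$ embedding $\|w\|_{L^6}\le C\|w\|_{H^1}$ and applying Hölder's inequality to interpolate between $L^2$ and $L^6$, the identities $\tfrac{1}{3}=\tfrac{1/2}{2}+\tfrac{1/2}{6}$ and $\tfrac{1}{4}=\tfrac{1/4}{2}+\tfrac{3/4}{6}$ give respectively $\|w\|_{L^3}\le \|w\|_{L^2}^{1/2}\|w\|_{L^6}^{1/2}$ and $\|w\|_{L^4}\le \|w\|_{L^2}^{1/4}\|w\|_{L^6}^{3/4}$, from which (a) and (b) follow at once.

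Inequality (d) comes from integration by parts: since $\mathbb{T}^3$ has no boundary, $\|\nabla w\|_{L^2}^2 = -\int_{\mathbb{T}^3} w\,\Delta w \le \|w\|_{L^2}\|\Delta w\|_{L^2}$, and via Plancherel $\|\Delta w\|_{L^2}^2 = \sum_k |2\pi k|^4|\hat{w}_k|^2 = \|\nabla^2 w\|_{L^2}^2$ when $\nabla^2 w$ is measured in the Frobenius norm, delivering the clean constant $1$. For (e) I would start from $\|\nabla w\|_{L^4}^4 = \int |\nabla w|^2\,\nabla w\cdot\nabla w$ and integrate by parts in one $\nabla w$ factor. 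The resulting two terms both contain $w$ multiplied either by $|\nabla w|^2\Delta w$ or by $|\nabla w||\nabla^2 w||\nabla w|$; Cauchy--Schwarz bounds each by $C\|w\|_{L^\infty}\|\nabla^2 w\|_{L^2}\|\nabla w\|_{L^4}^2$, and dividing through yields (e).

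The main obstacle is (c), the three-dimensional Agmon inequality, because the Sobolev embedding $H^{3/2}(\mathbb{T}^3)\hookrightarrow L^\infty$ just fails at the critical index $s=3/2$, so one cannot simply interpolate $H^{3/2}$ between $H^1$ and $H^2$. My plan is the Littlewood--Paley--flavored dyadic splitting: expand $w=\sum_k \hat{w}_k e^{2\pi i k\cdot x}$, bound $\|w\|_{L^\infty}\le \sum_k |\hat{w}_k|$, then split the lattice sum into $|k|\le R$ and $|k|>R$ and apply Cauchy--Schwarz with weights $(1+|k|^2)$ and $(1+|k|^2)^2$ respectively. Using $\sum_{|k|\le R}(1+|k|^2)^{-1}\lesssim R$ and $\sum_{|k|>R}(1+|k|^2)^{-2}\lesssim R^{-1}$ in three dimensions gives $\|w\|_{L^\infty}\le C(R^{1/2}\|w\|_{H^1}+R^{-1/2}\|w\|_{H^2})$, and optimizing in $R\sim \|w\|_{H^2}/\|w\|_{H^1}$ produces (c). This Fourier-side optimization is the only place where the sharp critical-exponent obstruction must be handled by hand; everything else reduces to Hölder and integration by parts.
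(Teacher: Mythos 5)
The paper omits the proof of this lemma entirely, citing \cite{Adams2003,temam2001ns} as standard references, so there is no in-paper argument to compare against. Your derivation is correct and follows the canonical route: (a)--(b) by H\"older interpolation between $L^2$ and the critical embedding $H^1(\mathbb T^3)\hookrightarrow L^6$ with the correct exponents; (c) by the Fourier-side dyadic split and optimization over the cutoff $R$ (noting that $\|w\|_{H^2}\ge\|w\|_{H^1}$ ensures $R\ge 1$ so the counting estimates apply); (d) by integration by parts and the Plancherel identity $\|\Delta w\|_{L^2}=\|\nabla^2 w\|_{L^2}$, which correctly yields the sharp constant $1$ as stated; and (e) by integrating by parts in $\int|\nabla w|^2\,\partial_i w\,\partial_i w$ and applying H\"older with exponents $(\infty,4,4,2)$ before dividing through by $\|\nabla w\|_{L^4}^2$.
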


It is thus seen from (\ref{eq:interp_inf}) that the assumption (\ref{eq:H1_assump_u}) in Theorem~\ref{thm:uconverg_H1} truly falls into the regime (\ref{eq:small_Linf}):  $\|u^{\epsilon,\delta}_0-u_0\|_{L^\infty}^2\le C\|u^{\epsilon,\delta}_0-u_0\|_{H^1} \|u^{\epsilon,\delta}_0-u_0\|_{H^2}\le C(\epsilon+\delta)^{1-a/2}$ with $a<2$.
Note that the constraint $a<2$ is essential here for a strictly positive lower bound of the lifespan $T_{\epsilon,\delta}$ in (\ref{eq:Texist}).

A similar result for $u^{\epsilon,\delta}\to u$ in $L^\infty([0,T];L^2(\mathbb T^3))$ (given a proper $T>0$) can be stated as follows.

\begin{corollary} \label{cor:convergence}    
Assume that there exist constants $a \in [0,\frac{4}{3})$ and $C_0>0$, both independent of $\epsilon$ and $\delta$, such that
    \begin{subequations}
    \begin{align}
        &{\| u_0^{\epsilon,\delta}-u_0\|}_{L^2(\mathbb T^3)}^2 + (\epsilon+\delta)^a \left\|\nabla^2 \left(u_0^{\epsilon,\delta}-u_0\right)\right\|_{L^2(\mathbb T^3)}^2 \le C_0(\epsilon+\delta), \\
        &\left\| \left(p_0^{\epsilon,\delta},U_0^{\epsilon,\delta}\right) \right\|_{L^2(\mathbb T^3)}^2 + (\epsilon+\delta)^a \left\| \nabla^2\left(p_0^{\epsilon,\delta},U_0^{\epsilon,\delta}\right) \right\|_{L^2(\mathbb T^3)}^2 \le C_0.
    \end{align}
    \end{subequations}
    Then, if $T_0<\infty$, the solution $(u^{\epsilon,\delta},p^{\epsilon,\delta},U^{\epsilon,\delta})$ exists on $[0,T_0]$ for sufficiently small $\epsilon$ and $\delta$. Otherwise, if $T_0=\infty$, the existing time $T_{\epsilon,\delta}$ is uniformly bounded from below and satisfies
    \[
        T_{\epsilon,\delta} \ge C' (2-a) |\log(\epsilon+\delta)| \to \infty \quad \text{as} 
        \quad \epsilon+\delta\to 0,
    \]
    where $C'$ depends only on $C_0$ and the smooth solution $(p,u,U)$.
    Moreover, for any $T>0$ such that both $u^{\epsilon,\delta}$ and $u$ exist on $[0,T]$, we have
    \[
        \sup_{t\in[0,T]} {\| u^{\epsilon,\delta}(t,\cdot)-u(t,\cdot)\|}_{L^2(\mathbb T^3)}^2 \le C_T(\epsilon+\delta)
    \]
    and
    \[
       \sup_{t\in[0,T]} \| u^{\epsilon,\delta}(t,\cdot)\|_{L^\infty(\mathbb T^3)} \le C_T  
    \]
    for sufficiently small $\epsilon+\delta \ll 1$, where $C_T$ depends only on $C_0$, $T$ and $(p,u,U)$.
\end{corollary}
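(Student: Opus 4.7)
My plan is to deduce Corollary~\ref{cor:convergence} by adapting the scheme used to prove Theorem~\ref{thm:uconverg_H1}, but running the main energy estimate at the $L^2$-level for the velocity error $w := u^{\epsilon,\delta}-u$ rather than at $H^1$-level. The crucial preliminary is to verify that the Corollary's hypotheses still place us in the small-$L^\infty$ perturbation regime (\ref{eq:small_Linf}). Indeed, from the interpolation (\ref{eq:interp_nabla_L2}) together with the assumed bounds on $\|u_0^{\epsilon,\delta}-u_0\|_{L^2}^2$ and $(\epsilon+\delta)^{a}\|\nabla^2(u_0^{\epsilon,\delta}-u_0)\|_{L^2}^2$, one obtains
\[
\|\nabla(u_0^{\epsilon,\delta}-u_0)\|_{L^2}^2 \le \|u_0^{\epsilon,\delta}-u_0\|_{L^2}\,\|\nabla^2(u_0^{\epsilon,\delta}-u_0)\|_{L^2} \le C_0(\epsilon+\delta)^{1-a/2},
\]
and then, by (\ref{eq:interp_inf}), $\|u_0^{\epsilon,\delta}-u_0\|_{L^\infty}^2 \le C_0(\epsilon+\delta)^{1-3a/4}$; the threshold $a<4/3$ is precisely what makes this vanish as $\epsilon+\delta\to 0$.

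With this in hand I would derive an $L^2$-energy inequality for the error triple $(w,q,W):=(u^{\epsilon,\delta}-u,\,p^{\epsilon,\delta}-p,\,U^{\epsilon,\delta}-U)$, mirroring the algebraic structure used in Theorem~\ref{thm:uconverg_H1}: test the error equations against $w$, $\epsilon q$ and $\delta W$ respectively, exploit the cancellation between pressure and divergence coming from the symmetric hyperbolic structure of (\ref{eq:relax}), and close the transport nonlinearity $\nabla\cdot(u^{\epsilon,\delta}\otimes u^{\epsilon,\delta}-u\otimes u)$ through an $L^\infty$-bound on $u^{\epsilon,\delta}$. This yields a Gronwall-type inequality of the schematic form
\[
\tfrac{d}{dt}\bigl(\|w\|_{L^2}^2+\epsilon\|q\|_{L^2}^2+\delta\|W\|_{L^2}^2\bigr) \le C\bigl(1+\|u^{\epsilon,\delta}\|_{L^\infty}\bigr)\bigl(\|w\|_{L^2}^2+\epsilon\|q\|_{L^2}^2+\delta\|W\|_{L^2}^2\bigr)+C(\epsilon+\delta),
\]
and since the initial energy is $\mathcal O(\epsilon+\delta)$, integrating produces the advertised $L^2$-convergence. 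The required $L^\infty$-control on $u^{\epsilon,\delta}$ is obtained by propagating the (possibly degenerate) initial bound $\|\nabla^2 u_0^{\epsilon,\delta}\|_{L^2}^2 \le C_0(\epsilon+\delta)^{1-a}$ via an auxiliary $H^2$-energy estimate for $u^{\epsilon,\delta}$, and then applying the interpolation $\|u^{\epsilon,\delta}-u\|_{L^\infty}^2\le C\|u^{\epsilon,\delta}-u\|_{H^1}\|u^{\epsilon,\delta}-u\|_{H^2}$, in which the $L^2$-smallness factor offsets the $H^2$-blow-up. The lifespan statement (\ref{eq:Texist}) and the continuation up to $T_0$ then follow from the same bootstrap/continuation argument as in Theorem~\ref{thm:uconverg_H1}.

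The principal obstacle I expect is the sub-range $a\in[1,4/3)$, where $\|\nabla^2 u_0^{\epsilon,\delta}\|_{L^2}$ genuinely diverges, so a naive propagation of $H^2$-regularity would destroy the constants. The key to a clean closure is that this $H^2$-blow-up is always weighed against the $L^2$-smallness of $w$ via the interpolation $\|w\|_{L^\infty}^2\le C\|w\|_{H^1}\|w\|_{H^2}$, and $a<4/3$ is exactly the threshold at which the resulting $\mathcal O((\epsilon+\delta)^{(4-3a)/4})$ control on $\|u^{\epsilon,\delta}-u\|_{L^\infty}$ still vanishes—allowing both the bootstrap and the nonlinear closure in the energy inequality above to go through.
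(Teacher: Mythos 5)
Your proposal captures the substance of the paper's argument and correctly identifies both the decisive interpolation inequality $\|v\|_{L^\infty}^2\le C\|v\|_{H^1}\|v\|_{H^2}$ and the role of the threshold $a<4/3$ (via $\|u_0^{\epsilon,\delta}-u_0\|_{L^\infty}^2 = \mathcal O((\epsilon+\delta)^{1-3a/4})$). However, there is one structural choice where you diverge from the paper, and it is worth comparing. The paper keeps everything in the error variable $W=(q,v,V)$ and works with a \emph{single} combined weighted energy
\[
E(t)=\|W(t,\cdot)\|_{L^2}^2+(\epsilon+\delta)^a\|\nabla^2 W(t,\cdot)\|_{L^2}^2,
\]
which yields \emph{one} scalar ODE inequality
$\frac{d}{dt}E\le C\big(\sqrt{(\epsilon+\delta)E}+E+(\epsilon+\delta)^{-3a/4}E^2\big)$.
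The exponent $3a/4$ comes out cleanly because the weighted AM--GM bound
$E\ge(\epsilon+\delta)^{3a/4}\|v\|_{L^2}^{1/2}\|\nabla^2 v\|_{L^2}^{3/2}$ together with Lemma~\ref{lem:interp_ineq} controls the nonlinear terms $A_0=\|v\|_{L^4}^4$ and $A_2=(\epsilon+\delta)^a\|v\|_{L^\infty}^2\|\nabla^2 v\|_{L^2}^2$ by $C(\epsilon+\delta)^{-3a/4}E^2$. Putting $a'=3a/2\in[0,2)$, the lifespan and convergence then drop out of the same integral comparison as in Theorem~\ref{thm:uconverg_H1}. You instead propose an $L^2$-estimate for the error plus a \emph{separate} auxiliary $H^2$-estimate for $u^{\epsilon,\delta}$ itself, closed by a bootstrap through the interpolation. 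That route can be made to work, but it is more delicate: closing a raw $H^2$-estimate for the full $u^{\epsilon,\delta}$ requires the very $L^\infty$-control you are trying to establish, so you have two coupled quantities to propagate instead of one, and the bookkeeping is heavier (the paper's combined $E$ sidesteps this circularity entirely by bundling both scales into one Gronwall quantity on the error). Two smaller corrections: the quadratic source $\|v\otimes v\|_{L^2}^2\le\|v\|_{L^\infty}^2\|v\|_{L^2}^2$ produces a factor $\|v\|_{L^\infty}^2$ in the $L^2$ differential inequality, not the linear $(1+\|u^{\epsilon,\delta}\|_{L^\infty})$ you wrote; and the $\|v\|_{H^1}$ appearing in your interpolation is not directly controlled by your $L^2$- or $H^2$-energies, so you must additionally interpolate $\|\nabla v\|_{L^2}\le\|v\|_{L^2}^{1/2}\|\nabla^2 v\|_{L^2}^{1/2}$ (which the paper does implicitly inside the combined $E$).
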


The proof of Corollary~\ref{cor:convergence} is given in Section~\ref{subsec:cor:convergence}. Note that in this case, it holds that $\|u^{\epsilon,\delta}_0-u_0\|_{H^1}^2 \le C(\epsilon+\delta)^{1-a/2}$ due to (\ref{eq:interp_nabla_L2}), and hence $\|u^{\epsilon,\delta}_0-u_0\|_{L^\infty}^2 \le C(\epsilon+\delta)^{1-3a/4}$ with $3a/4<1$. Therefore, it still belongs to the regime (\ref{eq:small_Linf}), but has different orders of $(\epsilon+\delta)$ as compared with (\ref{eq:H1_assump_u}).

\begin{remark}
The results of Theorem~\ref{thm:uconverg_H1} and Corollary~\ref{cor:convergence} show that the approximate system (\ref{eq:relax}) is well chosen in the following sense. For sufficiently small $\epsilon,\delta$, the maximal life span $T_{\epsilon,\delta}$ of solutions to (\ref{eq:relax}) is bounded from below by the life span $T_0<\infty$ of the solutions to the original initial value problem (\ref{eq:ns}) for the Navier-Stokes system. This is relevant for practical computations that employ a numerical method  for (\ref{eq:relax}) to approximate the Navier-Stokes system and use small but finite values for $\epsilon$ and $\delta$.
\end{remark}

Under the small $L^\infty(\mathbb T^d)$-perturbation (of the initial velocity) regime, it is even possible to obtain a convergence result on the pressure sequence $p^{\epsilon,\delta}$ in $L^\infty([0,T];H^1(\mathbb T^d))$. This is more challenge since a direct energy method in the proof of Theorem~\ref{thm:uconverg_H1} can at best yield $\sup_{t\in[0,T]}\| (p^{\epsilon,\delta}-p)(t,\cdot) \|_{L^2}=\mathcal O(1)$ (see Section~\ref{subsec:thm:uconverg_H1}). Therefore, we need to work with stricter requirements on the $H^1(\mathbb T^d)$-norm of the initial-velocity perturbation and an additional constraint 
$\delta=o(\sqrt\epsilon)$. A result for $\mathbb T^2$ has been stated in Theorem 2.4 of \cite{HRYZ25}, whereas for $\mathbb T^3$, we now have the following.

\begin{theorem} \label{thm:pressure_3d} 
Assume that there exist constants $a\in [0,1)$ and $C_0>0$, both independent of $\epsilon$ and $\delta$, such that
    \begin{align}
        & \delta \le C_0\sqrt\epsilon, \label{eq:p_scaling} \\
        &\| u^{\epsilon,\delta}_0 - u_0\|_{H^1(\mathbb T^3)} + \sqrt\epsilon \|p^{\epsilon,\delta}_0 - p_0\|_{H^1(\mathbb T^3)}+ \sqrt\delta \|U^{\epsilon,\delta}_0 - U_0\|_{H^1(\mathbb T^3)} \le C_0(\epsilon+\delta), \label{eq:3dp_assump_1} \\
        &\|\nabla \dive u^{\epsilon,\delta}_0\|_{L^2(\mathbb T^3)} + \delta \left(\| \nabla^2 p^{\epsilon,\delta}_0\|_{L^2(\mathbb T^3)} + \|\nabla \dive U^{\epsilon,\delta}_0\|_{L^2(\mathbb T^3)} \right) \le C_0(\epsilon+\delta), \label{eq:3dp_assump_2} 
    \end{align}
    and
    \begin{equation} \label{eq:3dp_assump_3}
        {\|\nabla^2(u^{\epsilon,\delta}_0, \sqrt\epsilon p^{\epsilon,\delta}_0, \sqrt\delta U^{\epsilon,\delta}_0)\|}_{L^2(\mathbb T^3)} \le C_0(\epsilon+\delta)^{-a}.
    \end{equation}
    Then, for any $T>0$ such that both $u^{\epsilon,\delta}$ and $u$ exist on $[0,T]$, we have
    \[
        \sup_{t\in[0,T]} \left( {\| u^{\epsilon,\delta}(t,\cdot)-u(t,\cdot)\|}_{H^1(\mathbb T^3)} + \sqrt\epsilon \|p^{\epsilon,\delta}(t,\cdot)-p(t,\cdot)\|_{H^1(\mathbb T^3)} \right) \le C_T(\epsilon+\delta)
    \]
    for $\epsilon+\delta\ll 1$, where $C_T$ depends only on $C_0$, $a$, $T$, and the smooth solution $(p,u,U)$. In particular, if $\delta=o(\sqrt\epsilon)$ (that is, $\delta/\sqrt\epsilon \to 0$ as $\epsilon\to 0$), 
    this implies
    \[ 
        \sup_{t\in[0,T]} \|p^{\epsilon,\delta}(t,\cdot)-p(t,\cdot)\|_{H^1(\mathbb T^3)} \to 0 \quad \text{as} \quad \epsilon\to 0.
    \]
\end{theorem}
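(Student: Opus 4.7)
The plan is to follow the \emph{intermediate affine system} methodology introduced for the two-dimensional case in Theorem~2.4 of \cite{HRYZ25}, now extending it to $\mathbb T^3$. The motivation is that a direct weighted $H^1$-energy comparison of \eqref{eq:relax} with \eqref{eq:ns} yields only $\|u^{\epsilon,\delta}-u\|_{H^1}^2+\epsilon\|p^{\epsilon,\delta}-p\|_{H^1}^2=\mathcal O(\epsilon)$, which is too weak: under \eqref{eq:p_scaling} one has $\epsilon+\delta=\mathcal O(\sqrt\epsilon)$, so such a rate cannot reach $\mathcal O((\epsilon+\delta)^2)$ and gives no pressure convergence at all. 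To remedy this I would insert a linear intermediate system $(\tilde p,\tilde u,\tilde U)$ obtained by freezing the quadratic nonlinearity of \eqref{eq:relax} at the smooth NS velocity:
\begin{align*}
\epsilon\partial_t\tilde p+\nabla\cdot\tilde u&=0,\qquad \partial_t\tilde u+\nabla\cdot\tilde U+\nabla\tilde p=0,\\
\delta\partial_t\tilde U+\nabla\tilde u+\tilde U&=u\otimes u,
\end{align*}
starting from $(p^{\epsilon,\delta}_0,u^{\epsilon,\delta}_0,U^{\epsilon,\delta}_0)$. The proof then splits into two legs, each producing the $\mathcal O(\epsilon+\delta)$-rate in the appropriate weighted $H^1$ norm, after which a triangle inequality delivers the theorem; the limiting pressure statement under $\delta=o(\sqrt\epsilon)$ is immediate from $\sqrt\epsilon\|p^{\epsilon,\delta}-p\|_{H^1}=\mathcal O(\epsilon+\delta)$.

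For Leg~1 (intermediate vs.\ Navier-Stokes), using $\nabla\cdot u=0$ and $u\otimes u=U+\nabla u$ from \eqref{eq:ns}, the differences $(\tilde q,\tilde v,\tilde V)=(\tilde p-p,\tilde u-u,\tilde U-U)$ satisfy the \emph{linear} symmetric-hyperbolic system
\[
\epsilon\partial_t\tilde q+\nabla\cdot\tilde v=-\epsilon\partial_t p,\quad \partial_t\tilde v+\nabla\cdot\tilde V+\nabla\tilde q=0,\quad \delta\partial_t\tilde V+\nabla\tilde v+\tilde V=-\delta\partial_t U,
\]
driven by smooth sources of sizes $\mathcal O(\epsilon)$ and $\mathcal O(\delta)$. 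I would perform a weighted $H^1$-energy estimate with weights $(\sqrt\epsilon,1,\sqrt\delta)$ on $(\tilde q,\tilde v,\tilde V)$ to get the baseline bound, then upgrade it by an auxiliary argument on $\dive\tilde v$ explained below. For Leg~2 (relaxation vs.\ intermediate), the differences $(q',v',V')$ vanish initially and solve the same linear system with the additional semilinear source $u^{\epsilon,\delta}\otimes u^{\epsilon,\delta}-u\otimes u=v'\otimes u^{\epsilon,\delta}+u\otimes v'+\tilde v\otimes u^{\epsilon,\delta}+u\otimes \tilde v$ in the $V'$-equation. The $v'$-quadratic terms absorb into the damping $\|V'\|^2$ via the uniform $L^\infty$-bound for $u^{\epsilon,\delta}$ provided by Theorem~\ref{thm:uconverg_H1} (which itself needs \eqref{eq:3dp_assump_3} through the interpolation \eqref{eq:interp_inf}), while the $\tilde v$-source is $\mathcal O(\epsilon+\delta)$ from Leg~1, so the same weighted $H^1$-energy estimate closes.

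The main obstacle is sharpening Leg~1 from $\mathcal O(\epsilon)$ to $\mathcal O((\epsilon+\delta)^2)$. The naive energy inequality on the above linear system, after Young on $-\epsilon\partial_t p$ and $-\delta\partial_t U$, delivers only $\epsilon\|\tilde q\|^2+\|\tilde v\|^2+\delta\|\tilde V\|^2=\mathcal O(\epsilon+\delta^2)$, which is a factor $\sqrt\epsilon$ off in the velocity part. The essential ingredient is the \emph{well-preparedness} encoded in \eqref{eq:3dp_assump_2}: since $\dive u^{\epsilon,\delta}_0$ has zero mean on $\mathbb T^3$, Poincar\'e gives $\|\dive u^{\epsilon,\delta}_0\|_{L^2}=\mathcal O(\epsilon+\delta)$, and this initial smallness should propagate uniformly in $t$ through a damped wave-type relation satisfied by $\dive\tilde v$, obtained by differentiating the $\tilde q$-equation in time and eliminating $\partial_t\tilde v$ via the $\tilde v$-equation. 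Combined with a Hodge decomposition $\tilde v=\nabla\phi+\tilde v_{\mathrm{sol}}$ and a separate (simpler) estimate for the solenoidal part $\tilde v_{\mathrm{sol}}$, this upgrades the velocity error to $\mathcal O(\epsilon+\delta)$. Throughout, the scaling $\delta\le C_0\sqrt\epsilon$ is repeatedly used to rebalance cross-terms such as $\delta\|\partial_t U\|$ against $\sqrt\epsilon$-weighted quantities, and the mild regularity allowance \eqref{eq:3dp_assump_3} with $a<1$ provides just enough second-derivative control to secure the uniform $L^\infty$-bound on $u^{\epsilon,\delta}$ via \eqref{eq:interp_inf} throughout $[0,T]$.
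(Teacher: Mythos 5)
Your high-level plan coincides with the paper's: same intermediate affine system $(\tilde p,\tilde u,\tilde U)$ started from the relaxation data, the same split into a linear leg (intermediate vs.\ Navier--Stokes) and a nonlinear leg (relaxation vs.\ intermediate), and the same recognition that the divergence/curl parts must be treated separately via \eqref{eq:hodge}. However, two ingredients you rely on do not hold up as stated, and these are not cosmetic.

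\textbf{Leg~2: the $L^\infty$-bound on $u^{\epsilon,\delta}$.} You invoke Theorem~\ref{thm:uconverg_H1} to get $\|u^{\epsilon,\delta}\|_{L^\infty}\le C_T$, citing that \eqref{eq:3dp_assump_3} feeds into \eqref{eq:interp_inf}. But \eqref{eq:3dp_assump_3} only gives $\|\nabla^2(u^{\epsilon,\delta}_0-u_0)\|_{L^2}\lesssim(\epsilon+\delta)^{-a}$, so to satisfy the Theorem~\ref{thm:uconverg_H1} hypothesis \eqref{eq:H1_assump_u} one would need a parameter $a'$ with $(\epsilon+\delta)^{a'-2a}\lesssim(\epsilon+\delta)$, i.e.\ $a'\ge 1+2a$, which contradicts $a'<2$ as soon as $a\ge\tfrac12$. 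Since Theorem~\ref{thm:pressure_3d} allows $a$ up to $1$, your route fails in the range $a\in[\tfrac12,1)$. The paper does not invoke Theorem~\ref{thm:uconverg_H1} here at all: Step~III uses the \emph{boosted} energy $E(t)=\|W\|_{H^1}^2+(\epsilon+\delta)^{2a+2}\|\nabla^2W\|_{L^2}^2$ so that $\|v\|_{L^\infty}^2\le C(\epsilon+\delta)^{-a-1}E(t)$ and the quadratic term is controlled by a self-consistent continuation argument. In addition, the paper expands the nonlinear source around $u'$ (the intermediate velocity), not $u^{\epsilon,\delta}$, precisely so that only the bounds on $u'$ (pre-established as \eqref{eq:uprime_Linf}, \eqref{eq:uprime_h2} in Step~II) are needed; your decomposition $v'\otimes u^{\epsilon,\delta}+u\otimes v'+\tilde v\otimes u^{\epsilon,\delta}+u\otimes\tilde v$ makes $\|u^{\epsilon,\delta}\|_{L^\infty}$ and $\|\nabla u^{\epsilon,\delta}\|_{L^4}$ appear where no a priori control exists.

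\textbf{Leg~1: the upgrade to $\mathcal O(\epsilon+\delta)$.} You correctly identify that the naive weighted energy estimate on the linear difference system is too weak and that a div/curl split is needed, but the actual mechanism is not a single ``damped wave for $\dive\tilde v$.'' The paper's Step~I derives the scalar third-order equation \eqref{eq:plinear} for $p'$, introduces $f=\delta\partial_tp'+p'-p$ and $g=\sqrt{\epsilon\delta}\,\partial_t\nabla p'$, performs a weighted energy estimate on $(f,g)$, and then recovers both $\nabla(p'-p)$ and $\dive u'=-\epsilon\partial_t p'$ by reading the $f$-equation as a first-order ODE in time, using the explicit kernel $e^{-t/\delta}$. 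This ODE-in-time structure (yielding the Duhamel formula \eqref{eq:Apprime_p} for $\mathcal A\in\{\nabla,\partial_t\}$) is what actually produces the $\mathcal O(\epsilon+\delta)$ rate and is not captured by your description. Moreover, the solenoidal/curl part is not ``simpler'': Step~II requires introducing the initial-layer-corrected variable $X=\Omega'-\Omega-e^{-t/\delta}(\Omega'(0)-\Omega(0))$ and handling a $\delta^{-3/2}$-singular source via the $h=(\|(\xi,X)\|_{L^2}^2+\delta)^{1/2}$ trick; without this correction the exponential driving term destroys the estimate. These are essential ideas your sketch leaves out, so as written the argument does not close.
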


The proof of (\ref{thm:pressure_3d}) is given in Section~\ref{subsec:thm:pressure_3d}, where the technique of an intermediate affine system, firstly developed in our previous work \cite{HRYZ25}, is successfully extended to the 3-D condition. Here we point out that $\|u^{\epsilon,\delta}_0-u_0\|_{L^\infty}^2\le C(\epsilon+\delta)^{1-a}$ (with $a<1$) holds by (\ref{eq:3dp_assump_1}), (\ref{eq:3dp_assump_3}) and (\ref{eq:interp_inf}), thus again verifying the regime (\ref{eq:small_Linf}).

\subsection{Order-one initial velocity perturbations}

Now we move on to the much more difficult regime with larger perturbation
\begin{equation} \label{eq:O1_Linf} 
    \|u_0^{\epsilon,\delta}-u_0 \|_{L^\infty(\mathbb T^d)}=\mathcal O(1).
\end{equation}
The goal is still to show that, as $\epsilon,\delta$ both tend to 0, the velocity sequence $u^{\epsilon,\delta}$ of (\ref{eq:relax}) converges to the smooth solution $u$ of (\ref{eq:ns}) in $L^\infty([0,T_0];H^1(\mathbb T^d))$. For the sake of simplicity, let us only consider the case $T_0<\infty$; nevertheless, if the limit solution $u$ of (\ref{eq:ns}) exists globally, $u^{\epsilon,\delta}$ exists and approaches $u$ on $[0, T]$ for any $T < \infty$ as $\epsilon$ and $\delta$ vanish under the appropriate scaling.

Under the 3-D condition (on $\mathbb T^3$), we indeed establish two types of convergence results, formulated below as Theorems~\ref{thm:critical_3d} and \ref{thm:critical_3d_inhomo}, respectively. Both only work under (slightly different) additional constraints that $\epsilon$ is smaller than $\delta$. The proofs will be presented in Sections~\ref{subsec:thm:critical_3d} and \ref{subsec:thm:critical_3d_inhomo}, respectively.

\begin{theorem} \label{thm:critical_3d}
 Assume that there exists a constant $C_0>0$ independent of $\epsilon$ and $\delta$ such that
\begin{subequations}\label{ass1}
\begin{align}
    &\epsilon \le C_0 \delta^2, \\
    &\|u^{\epsilon,\delta}_0 - u_0\|_{L^2(\mathbb T^3)}^2 \le C_0\delta^{\frac{3}{2}}, \\
    &\left\|\curl\left(u^{\epsilon,\delta}_0 - u_0\right)\right\|_{L^2(\mathbb T^3)}^2 + \delta\left\|\curl\left(u^{\epsilon,\delta}_0 - u_0\right)\right\|_{H^1(\mathbb T^3)}^2 \le C_0\delta^{\frac{1}{2}}, \label{eq:ass1_c}\\
    &\|\dive u^{\epsilon,\delta}_0\|_{L^2(\mathbb T^3)}^2 + \delta \|\dive u^{\epsilon,\delta}_0\|_{H^1(\mathbb T^3)}^2 \le C_0\delta^{-\frac{1}{2}}\epsilon,
\end{align}
\end{subequations}
and
\begin{subequations}\label{ass2}
\begin{align}
    &\|p^{\epsilon,\delta}_0\|_{H^1(\mathbb T^3)}^2 + \delta\|p^{\epsilon,\delta}_0\|_{H^2(\mathbb T^3)}^2 \le C_0\delta^{-\frac{1}{2}}, \\
    &{\|U^{\epsilon,\delta}_0\|}_{H^1(\mathbb T^3)}^2 + \delta \|\dive U^{\epsilon,\delta}_0\|_{H^1(\mathbb T^3)}^2 \le C_0\delta^{-\frac{1}{2}}. \label{eq:ass2_b} 
\end{align}
\end{subequations}   
Then, for sufficiently small $\delta\ll 1$, the solution $(u^{\epsilon,\delta},p^{\epsilon,\delta},U^{\epsilon,\delta})$ exists on $[0,T_0]$. Moreover, for $t\in[0,T_0]$, the following bounds hold true:
    \begin{subequations} \label{eq:bounds_critical}
    \begin{align}
        \|u^{\epsilon,\delta}(t,\cdot)-u(t,\cdot)\|_{L^2(\mathbb T^3)}^2&\le \mathcal C\delta^{\frac{3}{2}}, \label{eq:bounds_critical_u} \\
        {\|\curl (u^{\epsilon,\delta}-u)(t,\cdot)\|}_{L^2(\mathbb T^3)}^2 + \delta {\|\curl (u^{\epsilon,\delta}-u)(t,\cdot)\|}_{H^1(\mathbb T^3)}^2  &\le \mathcal C\delta^{\frac{1}{2}}, \label{eq:bounds_critical_curl} \\
        {\|\dive u^{\epsilon,\delta}(t,\cdot)\|}_{L^2(\mathbb T^3)}^2 + \delta {\|\dive u^{\epsilon,\delta}(t,\cdot)\|}_{H^1(\mathbb T^3)}^2 &\le \mathcal C\delta^{-\frac{1}{2}}\epsilon, \label{eq:bounds_critical_div}
    \end{align}
    \end{subequations}
where the constant $\mathcal C$ depends only on $C_0$, $T_0$ and the smooth solution $(p,u,U)$.
\end{theorem}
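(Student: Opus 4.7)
The plan is to combine a Brenier-type modulated energy functional with a Hodge-type decomposition of $u^{\epsilon,\delta}-u$ and a continuity/bootstrap argument that exploits the scaling $\epsilon\le C_0\delta^2$ and the hierarchy of scales $\delta^{3/2}$ for the $L^2$ error, $\delta^{1/2}$ for the curl error, and $\epsilon/\delta^{1/2}$ for the divergence. The three bounds in \eqref{eq:bounds_critical} are coupled, so I would propagate them jointly and use the constraint $\epsilon\le C_0\delta^2$ to absorb every appearance of $\epsilon$ into a small power of $\delta$.

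First I would introduce a modulated energy of the form
\[
    \mathcal E(t) = \frac12\int_{\mathbb T^3}\!\Bigl(|u^{\epsilon,\delta}-u|^2 + \epsilon|p^{\epsilon,\delta}-p|^2 + \delta\bigl|U^{\epsilon,\delta}-(u^{\epsilon,\delta}\otimes u^{\epsilon,\delta}-\nabla u^{\epsilon,\delta})\bigr|^2\Bigr)\,dx,
\]
and differentiate it in time using \eqref{eq:relax} and \eqref{eq:ns}. Following the calculation of \cite{Brenier2003}, the damping in \eqref{eq:relax_c} produces a dissipation proportional to $\|U^{\epsilon,\delta}-(u^{\epsilon,\delta}\otimes u^{\epsilon,\delta}-\nabla u^{\epsilon,\delta})\|_{L^2}^2$, which after testing against $\nabla u^{\epsilon,\delta}$ yields an effective dissipation in $\|\nabla(u^{\epsilon,\delta}-u)\|_{L^2}^2$. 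The residual cross terms — coming from the pressure coupling, the convective nonlinearity $u\otimes u$, and the mismatch between $u$ and $u^{\epsilon,\delta}$ — would be absorbed into $\mathcal E$ itself by Grönwall, provided $\|u^{\epsilon,\delta}\|_{L^\infty}$ stays bounded, which is supplied by the bootstrap below. This delivers \eqref{eq:bounds_critical_u}.

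Next I would take $\curl$ of \eqref{eq:relax_b}, subtract the corresponding equation satisfied by $\omega=\curl u$, and test the difference against $\omega^{\epsilon,\delta}-\omega$ and, after multiplying by $\delta$, against $-\Delta(\omega^{\epsilon,\delta}-\omega)$. Using the structural identity that $\curl\dive U^{\epsilon,\delta}$ differs from $\curl\dive(u^{\epsilon,\delta}\otimes u^{\epsilon,\delta}-\nabla u^{\epsilon,\delta})$ only by modulated quantities controlled in the previous step, I expect an inequality of the form
\[
    \frac{d}{dt}\!\Bigl(\|\omega^{\epsilon,\delta}-\omega\|_{L^2}^2+\delta\|\omega^{\epsilon,\delta}-\omega\|_{H^1}^2\Bigr)+D(t)\le R(t),
\]
where $D(t)$ controls $\|\nabla(\omega^{\epsilon,\delta}-\omega)\|_{L^2}^2+\delta\|\nabla^2(\omega^{\epsilon,\delta}-\omega)\|_{L^2}^2$ and $R(t)$ collects the convection-type remainders. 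Using the $L^2$ bound just obtained, the interpolation estimates of Lemma~\ref{lem:interp_ineq}, and the bootstrap assumption on the curl, each term in $R(t)$ would be split into a piece absorbed by $D(t)$ and a piece of size $\delta^{1/2}$ times the bootstrap bound, yielding \eqref{eq:bounds_critical_curl}. For the divergence component I would exploit \eqref{eq:relax_a}, namely $\dive u^{\epsilon,\delta}=-\epsilon\,\partial_t p^{\epsilon,\delta}$: a direct $H^1$-energy estimate on the symmetric hyperbolic system \eqref{eq:relax}, with all coefficients of order $\delta^{-1/2}$ thanks to \eqref{eq:ass2_b}, controls $\|\partial_t p^{\epsilon,\delta}\|_{H^1}$ in terms of the initial data, and multiplying by $\epsilon$ together with $\epsilon\le C_0\delta^2$ propagates \eqref{eq:bounds_critical_div}.

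Finally I would close the argument by a standard continuity bootstrap on $[0,T_0]$: let $T^*$ be the maximal time on which doubled versions of \eqref{eq:bounds_critical} hold, and show that on $[0,T^*)$ the three estimates derived above strictly improve the bootstrap assumptions, forcing $T^*=T_0$ for $\delta$ small enough. The main obstacle will be the curl step: since $\|u^{\epsilon,\delta}_0-u_0\|_{L^\infty}$ is only $\mathcal O(1)$, the transport of $\omega^{\epsilon,\delta}$ by $u^{\epsilon,\delta}$ produces quadratic terms that are not a priori small, and the delicate balance between the $\delta^{1/2}$-scale of the curl error, the $\epsilon/\delta^{1/2}$-scale of the divergence, and the $\nabla\omega^{\epsilon,\delta}$-dissipation coming from the modulated energy must be arranged so that every nonlinearity is dominated — the purely relaxation-type damping of $U^{\epsilon,\delta}$ is by itself too weak, and this is precisely where the scaling $\epsilon\le C_0\delta^2$ becomes indispensable.
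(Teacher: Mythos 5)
Your high-level plan (modulated energy, Hodge splitting, bootstrap, $\epsilon\le C_0\delta^2$ as the key scaling) is in the right spirit, but there are two concrete gaps that would prevent you from recovering the sharp $\delta^{3/2}$ bound in \eqref{eq:bounds_critical_u}.

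First, the modulated energy you propose,
\[
    \mathcal E(t) = \frac12\int_{\mathbb T^3}\Bigl(|u^{\epsilon,\delta}-u|^2 + \epsilon|p^{\epsilon,\delta}-p|^2 + \delta\bigl|U^{\epsilon,\delta}-(u^{\epsilon,\delta}\otimes u^{\epsilon,\delta}-\nabla u^{\epsilon,\delta})\bigr|^2\Bigr)\,dx,
\]
cannot deliver $\|u^{\epsilon,\delta}-u\|_{L^2}^2\le\mathcal C\delta^{3/2}$ under the hypotheses as stated, simply because $\mathcal E(0)$ is of the wrong order. Assumption \eqref{eq:ass2_b} gives $\|U^{\epsilon,\delta}_0\|_{L^2}^2\le C_0\delta^{-1/2}$, so the term $\delta\|U^{\epsilon,\delta}_0-(u^{\epsilon,\delta}_0\otimes u^{\epsilon,\delta}_0-\nabla u^{\epsilon,\delta}_0)\|_{L^2}^2$ is generically of order $\delta^{1/2}$, not $\delta^{3/2}$. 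Grönwall then only yields $\|u^{\epsilon,\delta}-u\|_{L^2}^2\le\mathcal C\delta^{1/2}$, which is the bound of Theorem~\ref{thm:critical_3d_inhomo} but falls short of \eqref{eq:bounds_critical_u}. The resolution requires an \emph{initial layer correction}: replacing $V=\sqrt\delta(U^{\epsilon,\delta}-U)$ by $V_1 = V - e^{-t/\delta}V(0,\cdot)$ (and the analogous correction for $\partial_t U^{\epsilon,\delta}$) so that the modulated quantity vanishes at $t=0$, and the cost of the correction reappears only as an integrable forcing $\sim\delta^{-1/4}e^{-t/\delta}$ on the right-hand side. Together with an energy built from $W_1=(q,v,V_1)$ weighted by $1,\delta^2,\delta^3$ on successively differentiated quantities, this is what gives $E^{\epsilon,\delta}(0)=\mathcal O(\delta^{3/2})$ and the claimed $L^2$ rate. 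Without this correction your scheme secretly requires $U^{\epsilon,\delta}_0$ to be $o(\delta^{1/4})$-close to $u^{\epsilon,\delta}_0\otimes u^{\epsilon,\delta}_0-\nabla u^{\epsilon,\delta}_0$, which is strictly stronger than \eqref{eq:ass2_b}.

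Second, your curl step is structurally off. Testing the vorticity equation against $\omega^{\epsilon,\delta}-\omega$ and $-\Delta(\omega^{\epsilon,\delta}-\omega)$ does not extract a clean dissipation for the \emph{damped wave} equation \eqref{eq:omega_ed}: $\delta\partial_t^2\omega^{\epsilon,\delta}$ paired with $-\Delta(\omega^{\epsilon,\delta}-\omega)$ produces a term with no useful sign and an $\mathcal O(\delta)$ factor that is too weak to control the transport nonlinearities when $\|u^{\epsilon,\delta}_0-u_0\|_{L^\infty}=\mathcal O(1)$. The dissipation $\|\nabla(\omega^{\epsilon,\delta}-\omega)\|_{L^2}^2$ actually comes from the modulated energy
\[
    F^{\epsilon,\delta}(t)=\int_{\mathbb T^3}\Bigl(|\omega^{\epsilon,\delta}-\omega+\delta\partial_t\omega^{\epsilon,\delta}|^2+|\delta\partial_t\omega^{\epsilon,\delta}|^2+2\delta|\nabla\omega^{\epsilon,\delta}|^2\Bigr)\,dx,
\]
whose evolution identity \eqref{eq:modu_energy_evolve} produces a negative definite term $-\|\nabla(\omega^{\epsilon,\delta}-\omega)\|_{L^2}^2-\delta\|\partial_t\omega^{\epsilon,\delta}\|_{L^2}^2$. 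The additional combination $\omega^{\epsilon,\delta}+\delta\partial_t\omega^{\epsilon,\delta}$ (not simply $\omega^{\epsilon,\delta}$) is precisely what makes this work. Your multiplier choice would not reproduce this, and in the absence of any small-data assumption on the vorticity perturbation the terms $T^{\epsilon,\delta}_{\mathrm I}, T^{\epsilon,\delta}_{\mathrm{II}}$ — in particular the $\omega^{\epsilon,\delta}\cdot\nabla u^{\epsilon,\delta}$ contribution — can only be absorbed after the Hodge splitting of $A^{\epsilon,\delta}$ and exactly because the divergence and $L^2$ errors are already known to be small in the bootstrap.
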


\begin{theorem} \label{thm:critical_3d_inhomo}
Assume that there exists a constant $C_0>0$ independent of $\epsilon$ and $\delta$ such that
\begin{subequations}
\begin{align}
    &\|u^{\epsilon,\delta}_0-u_0\|_{L^2(\mathbb T^3)}^2 \le C_0\delta^{\frac{1}{2}}, \\
    &\left\|\curl\left(u^{\epsilon,\delta}_0 - u_0\right)\right\|_{L^2(\mathbb T^3)}^2 + \delta\left\|\curl\left(u^{\epsilon,\delta}_0 - u_0\right)\right\|_{H^1(\mathbb T^3)}^2 \le C_0\delta^{\frac{1}{2}}, \\
    & \|\dive u^{\epsilon,\delta}_0\|_{L^2(\mathbb T^3)}^2 + \delta \|\dive u^{\epsilon,\delta}_0\|_{H^1(\mathbb T^3)}^2 \le C_0\delta^{-\frac{1}{2}}\epsilon,
\end{align}
\end{subequations}    
and
\begin{subequations}
\begin{align}
    &\|p^{\epsilon,\delta}_0\|_{H^1(\mathbb T^3)}^2 + \delta\|p^{\epsilon,\delta}_0\|_{H^2(\mathbb T^3)}^2 \le C_0\delta^{-\frac{1}{2}}, \\
    & \|U^{\epsilon,\delta}_0\|_{H^1(\mathbb T^3)}^2 + \delta \|\dive U^{\epsilon,\delta}_0\|_{H^1(\mathbb T^3)}^2 \le C_0\delta^{-\frac{1}{2}}.
\end{align}
\end{subequations}
Then, there exists a positive constant $\mu_*\ll 1$, which depends only on $C_0$, $T_0$ and the smooth solution $u$, such that for sufficiently small $\epsilon$ and $\delta$ satisfying
\begin{equation} \label{eq:relation_3d_inhomo}
    \epsilon \le \mu_* \delta,
\end{equation}
the solution $(u^{\epsilon,\delta},p^{\epsilon,\delta},U^{\epsilon,\delta})$ exists on $[0,T_0]$, and the following bounds hold true for $t\in[0,T_0]$:
\begin{subequations} \label{eq:bounds_critcoro}
\begin{align}
    &\|u^{\epsilon,\delta}(t,\cdot)-u(t,\cdot)\|_{L^2(\mathbb T^3)}^2\le \mathcal C\delta^{\frac{1}{2}}, \label{eq:bounds_critcoro_u} \\
    &\|\curl (u^{\epsilon,\delta}-u)(t,\cdot)\|_{L^2(\mathbb T^3)}^2 + \delta \|\curl (u^{\epsilon,\delta}-u)(t,\cdot)\|_{H^1(\mathbb T^3)}^2 \le \mathcal C\delta^{\frac{1}{2}}, \label{eq:bounds_critcoro_curl} \\
    &\|\dive u^{\epsilon,\delta}(t,\cdot)\|_{L^2(\mathbb T^3)}^2 + \delta \|\dive u^{\epsilon,\delta}(t,\cdot)\|_{H^1(\mathbb T^3)}^2 \le \mathcal C\delta^{-\frac{1}{2}}\epsilon, \label{eq:bounds_critcoro_div}
\end{align}
\end{subequations}
where the constant $\mathcal C$ depends only on $C_0$, $T_0$ and the smooth solution $u$.
\end{theorem}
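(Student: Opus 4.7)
The plan is to follow the three-step strategy developed for Theorem~\ref{thm:critical_3d}---a modulated energy for the $L^2$ control of $u^{\epsilon,\delta}-u$, a Hodge-type splitting into vortical and acoustic parts, and a continuity/bootstrap argument---while adjusting the weights to reflect the weaker scaling $\epsilon \le \mu_* \delta$ (instead of $\epsilon \le C_0 \delta^2$) and the enlarged initial deficit $\|u_0^{\epsilon,\delta}-u_0\|_{L^2}^2 \le C_0 \delta^{1/2}$. I would take the inequalities \eqref{eq:bounds_critcoro} with $\mathcal C$ replaced by $2\mathcal C$ as bootstrap assumptions on a maximal subinterval $[0,T^*]\subset[0,T_0]$---valid at $t=0$ by hypothesis---and then try to recover them with the smaller constant $\mathcal C$.

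For \eqref{eq:bounds_critcoro_u}, I would differentiate the modulated energy
\[
\mathcal E_0(t) := \tfrac12 \|u^{\epsilon,\delta}-u\|_{L^2}^2 + \tfrac{\epsilon}{2}\|p^{\epsilon,\delta}-p\|_{L^2}^2 + \tfrac{\delta}{2}\|U^{\epsilon,\delta}-U\|_{L^2}^2
\]
along \eqref{eq:relax}. The dissipative contribution $-\delta^{-1}\|U^{\epsilon,\delta}-U\|_{L^2}^2$ produced by the relaxation of $U^{\epsilon,\delta}$, after inserting the representation $U = u\otimes u - \nabla u$, should absorb the cubic nonlinearities via Lemma~\ref{lem:interp_ineq} and the bootstrap bounds. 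The remaining source terms generated by $\delta\partial_t U$ and the acoustic mismatch are at most of order $\delta + \epsilon$, while cross-terms contribute linearly in $\mathcal E_0$. Grönwall's inequality then yields $\mathcal E_0(t)\lesssim \mathcal E_0(0) + \delta \lesssim \delta^{1/2}$, consistent with the larger initial deficit.

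For \eqref{eq:bounds_critcoro_curl}--\eqref{eq:bounds_critcoro_div}, I would apply $\curl$ and $\dive$ to \eqref{eq:relax_b} and use \eqref{eq:relax_a} and \eqref{eq:relax_c} to obtain damped evolution equations for $\curl(u^{\epsilon,\delta}-u)$ and for the acoustic pair $(\epsilon p^{\epsilon,\delta}, \dive u^{\epsilon,\delta})$. Performing weighted $L^2$ and $H^1$ energy estimates, the $\delta^{-1}$-damping coming from \eqref{eq:relax_c} provides control on one additional derivative of the vortical part and reproduces the powers of $\delta$ stated in \eqref{eq:bounds_critcoro}. The $L^\infty$ bounds on $u^{\epsilon,\delta}$ and $\nabla u^{\epsilon,\delta}$ needed to handle convective nonlinearities follow from Sobolev embeddings and Lemma~\ref{lem:interp_ineq} combined with the bootstrap; crucially, the contribution of the acoustic component to $\|\nabla u^{\epsilon,\delta}\|$ scales like $(\epsilon/\delta)^{1/2}$, which is controlled by $\mu_*^{1/2}$.

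The main obstacle will be closing the bootstrap under the merely linear scaling $\epsilon = \mathcal O(\delta)$. In Theorem~\ref{thm:critical_3d} the stronger assumption $\epsilon \le C_0 \delta^2$ made the acoustic sector a harmless perturbation, whereas here the couplings between acoustic and vortical modes carry factors $\epsilon/\delta$ rather than $\epsilon/\delta^2$ and are no longer automatically small. Each such offending term must be reorganised so that this ratio factors out explicitly and can then be absorbed into the vortical dissipation by requiring $\epsilon/\delta \le \mu_*$; the smallness threshold $\mu_*$ is thereby determined constructively from $C_0$, $T_0$, and the norms of the limit solution $u$. Once $\mu_*$ is fixed in this way, the recovered bounds are strictly sharper than the bootstrap assumptions, the continuity method forces $T^*=T_0$, and the theorem follows.
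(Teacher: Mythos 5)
Your high-level plan---curl/div splitting, a vorticity-based dissipation, and a bootstrap with $\epsilon\le\mu_*\delta$ isolated as the small parameter---matches the paper's strategy, and your remark that the offending couplings should factor out as $\epsilon/\delta$ is exactly the right diagnosis. However, three essential ingredients are either misplaced or missing, and without them the bootstrap does not close.

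First, the quantity you call a ``modulated energy,'' $\mathcal E_0=\tfrac12\|u^{\epsilon,\delta}-u\|_{L^2}^2+\tfrac{\epsilon}{2}\|p^{\epsilon,\delta}-p\|_{L^2}^2+\tfrac{\delta}{2}\|U^{\epsilon,\delta}-U\|_{L^2}^2$, is just $\tfrac12\|W\|_{L^2}^2$ for the rescaled residual $W=(q,v,V)$ from \eqref{eq:res}. The paper emphasizes that this direct energy cannot carry the $\mathcal O(1)$-perturbation regime: the term $\int V/\sqrt\delta:(v\otimes v)\,dx$ forces a $\|v\|_{L^\infty}$-dependence that is only controlled if you already have \eqref{eq:bounds_critcoro_curl}--\eqref{eq:bounds_critcoro_div}, and the damping $-\|V\|_{L^2}^2/\delta$ dissipates the stress residual, not $\nabla v$. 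In the paper's proof, \eqref{eq:bounds_critcoro_u} is not obtained from this energy at all---it is simply \emph{deduced} from the curl and divergence bounds via the Hodge identity \eqref{eq:hodge}, the conservation of $\int_{\mathbb T^3} v\,dx$, and Poincar\'e. Re-deriving it independently through $\mathcal E_0$ risks a circularity in the constants (the output Gr\"onwall factor depends on $M$, which in turn must be chosen after).

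Second, ``weighted $L^2$ and $H^1$ energy estimates'' for $\curl(u^{\epsilon,\delta}-u)$ is genuinely underspecified, and this is where the proof actually lives. The curl of \eqref{eq:relax_b} produces a damped wave equation \eqref{eq:omega_ed} for $\omega^{\epsilon,\delta}$, and the quantity that yields the crucial parabolic-type dissipation $-\int\bigl(|\nabla(\omega^{\epsilon,\delta}-\omega)|^2+\delta|\partial_t\omega^{\epsilon,\delta}|^2\bigr)dx$ is Brenier's modulated functional $F^{\epsilon,\delta}=\int\bigl(|\omega^{\epsilon,\delta}-\omega+\delta\partial_t\omega^{\epsilon,\delta}|^2+|\delta\partial_t\omega^{\epsilon,\delta}|^2+2\delta|\nabla\omega^{\epsilon,\delta}|^2\bigr)dx$ defined in \eqref{eq:modulated_energy}, not a generic weighted combination of $\|\omega^{\epsilon,\delta}-\omega\|_{L^2}^2$ and $\delta\|\nabla(\omega^{\epsilon,\delta}-\omega)\|_{L^2}^2$. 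Without the precise cross-term $\omega^{\epsilon,\delta}-\omega+\delta\partial_t\omega^{\epsilon,\delta}$, the second-order-in-time structure of \eqref{eq:omega_ed} does not collapse to a signed dissipation, and $T^{\epsilon,\delta}_\text{I}$, $T^{\epsilon,\delta}_\text{II}$ cannot be absorbed.

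Third, and specific to this theorem versus Theorem~\ref{thm:critical_3d}: the bootstrap here no longer contains smallness of $\|u^{\epsilon,\delta}-u\|_{L^2}$ (it is merely $\mathcal O(\delta^{1/2})$, not $o(\delta^{1/2})$), so the estimates of $\Pi_4$ and $\Pi_5^{(2)}$ in $T^{\epsilon,\delta}_\text{II}$ used in Theorem~\ref{thm:critical_3d} no longer work verbatim. The paper replaces them by putting $u^{\epsilon,\delta}-u$ in $L^6$ via the compensation bound \eqref{eq:v_L6}, $\|u^{\epsilon,\delta}-u\|_{L^6}^2\le C\delta^{1/2}+CF^{\epsilon,\delta}(t)$, which couples the velocity error directly to the modulated energy through Hodge and the div-smallness hypothesis. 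This produces the new $C(F^{\epsilon,\delta})^3$ term in \eqref{eq:dFdt_II_new} and is the technical novelty of this proof; your proposal has no analogue of it. Finally, the divergence estimate in the paper is not obtained from your $\mathcal E_0$ but from a separate energy $E^{\epsilon,\delta}$ in \eqref{eq:energy_crit_inhomo} built on $\partial_t u^{\epsilon,\delta}$, $\partial_t p^{\epsilon,\delta}$ and the \emph{initial-layer-corrected} variable $U_1^{\epsilon,\delta}$ from \eqref{eq:init_correct_U1}; without that correction the $\mathcal O(\delta^{-5/4})$ singularity of $\nabla\partial_t U^{\epsilon,\delta}_0$ cannot be neutralized.
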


The main difference between the two theorems lies in the order of the $L^2(\mathbb T^3)$-norm of the initial velocity perturbation $u^{\epsilon,\delta}_0-u_0$ with respect to small parameters  ($\delta^{3/4}$ vs. $\delta^{1/4}$), while the $H^1(\mathbb T^3)$- and $H^2(\mathbb T^3)$-norms of $u^{\epsilon,\delta}_0-u_0$ have the same order, being $\delta^{1/4}$ and $\delta^{-1/4}$, respectively (Note that the $H^1$- and $H^2$-norms must be recovered using (\ref{eq:hodge})). It is hence clear that (\ref{eq:O1_Linf}) holds in both cases. Nevertheless, the rationality for choosing these specific orders will be discussed in Remark~\ref{rem:orders}.

Our final result is an analogous convergence theorem for two-dimensional velocity $u^{\epsilon,\delta}$ of (\ref{eq:relax}) on $\mathbb T^2$. Let $(p,u,U)$ be a smooth solution of (\ref{eq:ns}) on $[0,T_0]\times \mathbb T^2$ with $T_0<\infty$. Unlike in the 3-D case, now we have only one choice of the order of initial perturbations; see a brief explanation in Remark~\ref{rem:orders_2d}. 

\begin{theorem} \label{thm:critical_2d}
Assume that there exists a constant $C_0\ge 1$ independent of $\epsilon$ and $\delta$ such that
\begin{subequations}
\begin{align}
    &\| u_0^{\epsilon,\delta}-u_0 \|_{L^2(\mathbb T^2)}^2 \le C_0\delta, \\
    &\|\curl u^{\epsilon,\delta}_0\|_{L^2(\mathbb T^2)}^2 + \delta \| \curl u^{\epsilon,\delta}_0 \|_{H^1(\mathbb T^2)}^2 \le C_0, \\
    &\|\dive u^{\epsilon,\delta}_0\|_{L^2(\mathbb T^2)}^2 + \delta \| \dive u^{\epsilon,\delta}_0 \|_{H^1(\mathbb T^2)}^2 \le C_0\delta^{-1}\epsilon,
\end{align}
\end{subequations}
and
\begin{subequations}
\begin{align}
    &\epsilon \|p^{\epsilon,\delta}_0\|_{L^2(\mathbb T^2)}^2 + \delta^2 \|p_0^{\epsilon,\delta}\|_{H^1(\mathbb T^2)}^2 + \delta^3 \|p_0^{\epsilon,\delta}\|_{H^2(\mathbb T^2)}^2 \le C_0\delta, \\ 
    &\|U^{\epsilon,\delta}_0\|_{L^2(\mathbb T^2)}^2 + \delta \| U^{\epsilon,\delta}_0 \|_{H^1(\mathbb T^2)}^2 + \delta^2 \|\dive U^{\epsilon,\delta}_0 \|_{H^1(\mathbb T^2)}^2 \le C_0.
\end{align}
\end{subequations}
Then, there exists a positive constant $\mu_*\ll 1$, which depends only on $C_0$, $T_0$ and the smooth solution $u$, such that for sufficiently small $\epsilon$ and $\delta$ satisfying
\begin{equation} \label{eq:relation_2d}
    \epsilon \le \mu_* \delta,
\end{equation}
the solution $(u^{\epsilon,\delta},p^{\epsilon,\delta},U^{\epsilon,\delta})$ exists on $[0,T_0]$, and the following bounds hold true for $t\in[0,T_0]$:
\begin{subequations} \label{eq:bounds_critcoro_2d}
\begin{align}
    &\|u^{\epsilon,\delta}(t,\cdot)-u(t,\cdot)\|_{L^2(\mathbb T^2)}^2\le \mathcal C\delta, \label{eq:bounds_crit2d_u} \\
    &\|\curl u^{\epsilon,\delta}(t,\cdot)\|_{L^2(\mathbb T^2)}^2 + \delta \|\curl u^{\epsilon,\delta}(t,\cdot)\|_{H^1(\mathbb T^2)}^2 \le \mathcal C, \label{eq:bounds_crit2d_curl} \\
    &\|\dive u^{\epsilon,\delta}(t,\cdot)\|_{L^2(\mathbb T^2)}^2 + \delta \|\dive u^{\epsilon,\delta}(t,\cdot)\|_{H^1(\mathbb T^2)}^2 \le \mathcal C\delta^{-1}\epsilon, \label{eq:bounds_crit2d_div}
\end{align}
\end{subequations}
where the constant $\mathcal C$ depends only on $C_0$, $T_0$ and the smooth solution $u$.
\end{theorem}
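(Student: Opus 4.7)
The proof will rest on a continuation-bootstrap framework combined with a modulated energy in the spirit of Brenier~\cite{Brenier2003} and a Helmholtz splitting of $u^{\epsilon,\delta}$ into its curl and divergence parts. Local-in-time existence of smooth solutions to (\ref{eq:relax}) is standard from symmetric hyperbolic theory. Fix a constant $\mathcal{C}\gg C_{0}$ and set
\[
  T^{*}=\sup\Bigl\{\,t\in[0,T_{\epsilon,\delta})\cap[0,T_{0}]:\ \eqref{eq:bounds_critcoro_2d}\text{ holds on }[0,t]\text{ with $\mathcal{C}$ replaced by $2\mathcal{C}$}\Bigr\}.
\]
The aim is to improve the bootstrap on $[0,T^{*}]$, thereby forcing $T^{*}=T_{0}$. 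On $[0,T^{*}]$ the planar Hodge identity $\|\nabla u^{\epsilon,\delta}\|_{L^{2}}^{2}\simeq\|\curl u^{\epsilon,\delta}\|_{L^{2}}^{2}+\|\dive u^{\epsilon,\delta}\|_{L^{2}}^{2}$ together with \eqref{eq:relation_2d} yields a uniform $H^{1}$-bound on $u^{\epsilon,\delta}$, and from the boot-strapped $H^{2}$-bound of order $\mathcal{C}/\delta$ the Brezis--Gallouet inequality gives the logarithmic control $\|u^{\epsilon,\delta}\|_{L^{\infty}}^{2}\lesssim(1+\mathcal{C})\log(1/\delta)$.

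For the $L^{2}$ velocity error \eqref{eq:bounds_crit2d_u} I would introduce the modulated energy
\[
  \tilde\eta(t)=\tfrac12\|u^{\epsilon,\delta}-u\|_{L^{2}}^{2}+\tfrac{\delta}{2}\|U^{\epsilon,\delta}+\nabla u-u\otimes u\|_{L^{2}}^{2}+\tfrac{\epsilon}{2}\|p^{\epsilon,\delta}-p\|_{L^{2}}^{2}.
\]
Differentiating along (\ref{eq:ns}) and (\ref{eq:relax}), the divergence-pressure cross term generated by (\ref{eq:relax_a}) is annihilated exactly by the time derivative of the $\epsilon$-pressure piece, while the relaxation in (\ref{eq:relax_c}) produces the dissipation $\|U^{\epsilon,\delta}+\nabla u-u\otimes u\|_{L^{2}}^{2}$. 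The remaining bilinear terms $\int (U^{\epsilon,\delta}+\nabla u-u\otimes u):(v\otimes u+u\otimes v+v\otimes v)$, with $v=u^{\epsilon,\delta}-u$, are controlled by the boundedness of $u$ in $L^{\infty}$ for the linear-in-$v$ pieces and by the planar Ladyzhenskaya inequality $\|v\|_{L^{4}}^{2}\lesssim\|v\|_{L^{2}}\|\nabla v\|_{L^{2}}$ for the quadratic piece, the factor $\|\nabla v\|_{L^{2}}$ being absorbed using the boot-strapped $H^{1}$-bound. Gronwall together with $\tilde\eta(0)\lesssim\delta$ delivers \eqref{eq:bounds_crit2d_u}.

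For \eqref{eq:bounds_crit2d_curl}, applying $\curl$ to (\ref{eq:relax_b}) kills $\nabla p^{\epsilon,\delta}$ and yields $\partial_{t}\omega^{\epsilon,\delta}+W=0$ with $W=\curl\nabla\!\cdot\!U^{\epsilon,\delta}$; applying $\curl\nabla\!\cdot$ to (\ref{eq:relax_c}) closes the system as
\[
  \delta\partial_{t}W+W=-\Delta\omega^{\epsilon,\delta}+\curl\nabla\!\cdot\!(u^{\epsilon,\delta}\otimes u^{\epsilon,\delta}).
\]
An energy estimate on this damped-wave system (with a companion estimate for the spatial gradient to supply the $\delta$-weighted $H^{1}$-bound) gives \eqref{eq:bounds_crit2d_curl}; the critical transport contribution is tamed by the planar cancellation $\int\omega^{\epsilon,\delta}\,u^{\epsilon,\delta}\!\cdot\!\nabla\omega^{\epsilon,\delta}=-\tfrac12\int\dive u^{\epsilon,\delta}\,(\omega^{\epsilon,\delta})^{2}$, which is small by \eqref{eq:bounds_crit2d_div}. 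The divergence bound \eqref{eq:bounds_crit2d_div} is then produced symmetrically: $\dive$ of (\ref{eq:relax_b}) combined with (\ref{eq:relax_a}) yields a wave-type coupling between $(\dive u^{\epsilon,\delta},\sqrt{\epsilon}\,\nabla p^{\epsilon,\delta})$ forced by $\dive\nabla\!\cdot\!U^{\epsilon,\delta}$; an energy estimate for this system and its spatial gradient closes using the small initial divergence data and the already-controlled curl quantities, and the factor $\epsilon/\delta$ propagates by construction.

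The main obstacle is closing the bootstrap against the 2D logarithmic $L^{\infty}$-growth together with the $\mathcal{O}(1)$-size of $\|\nabla v\|_{L^{2}}$ that the large initial vorticity forces upon us: a naive application of Gronwall produces factors of the type $\delta^{-C\mathcal{C}T_{0}}$ or $\mathcal{C}^{2}\delta^{1/2}$ that would wreck the target $O(\delta)$ bound on $\|v\|_{L^{2}}^{2}$. Resolving this requires substituting Ladyzhenskaya-type $L^{4}$-estimates for $L^{\infty}$-estimates wherever the bilinear terms are quadratic in $v$, absorbing the quadratic contributions into the modulated dissipation rather than letting them feed back into the energy, and selecting $\mu_{*}$ small enough that the $\sqrt{\epsilon/\delta}$ pollution of both the curl and divergence estimates remains subordinate to the leading energies. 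Since the three bounds in \eqref{eq:bounds_critcoro_2d} are mutually entangled through these small parameters, they must be established jointly within one bootstrap cycle, mirroring the strategy of Theorems~\ref{thm:critical_3d}--\ref{thm:critical_3d_inhomo}.
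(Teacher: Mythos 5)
Your overall strategy (bootstrap with a Hodge split of $u^{\epsilon,\delta}$, modulated energy for the $L^2$ error, damped-wave structure for the vorticity, and a parallel estimate for the divergence) is the correct one and matches the paper's in broad outline. The modulated energy you write down, after the rescaling $V=\sqrt\delta(U^{\epsilon,\delta}-U)$, $q=\sqrt\epsilon(p^{\epsilon,\delta}-p)$, is exactly the residual energy $\|W\|_{L^2}^2$ that the paper uses, so that part is fine; the curl and divergence estimates you sketch are also close in spirit to what the paper does (though the paper controls $\dive u^{\epsilon,\delta}$ via the time-derivative energy $G^{\epsilon,\delta}$ together with $\epsilon\partial_t p^{\epsilon,\delta}=-\dive u^{\epsilon,\delta}$, rather than the $(\dive u^{\epsilon,\delta},\sqrt\epsilon\nabla p^{\epsilon,\delta})$ wave system you propose -- both are reasonable).

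There is, however, one genuine gap at the heart of your argument: the $L^\infty$ control. You invoke the Brezis--Gallouet inequality to conclude $\|u^{\epsilon,\delta}\|_{L^\infty}^2\lesssim\log(1/\delta)$ from the $\mathcal O(1)$ $H^1$-bound and the $\mathcal O(\delta^{-1})$ $H^2$-bound, and then acknowledge that a Gronwall step with this growing constant produces $\delta^{-C T_0}$ factors that destroy the target rates. The ``resolution'' you propose -- replacing $L^\infty$ by $L^4$ wherever the bilinear term is quadratic in $v$ -- does not close the gap, because there remain several places where $\|u^{\epsilon,\delta}\|_{L^\infty}$ itself (not $\|v\|_{L^\infty}$) is needed with a \emph{uniform-in-$\delta$} constant: for instance the terms $(\dive u^{\epsilon,\delta})\,u^{\epsilon,\delta}\times\nabla\omega^{\epsilon,\delta}$ and $u^{\epsilon,\delta}\times\nabla\dive u^{\epsilon,\delta}$ in the vorticity production, and the factor $\|u^{\epsilon,\delta}\|_{L^\infty}^2$ multiplying $G^{\epsilon,\delta}$ in the divergence/time-derivative estimate. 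If that factor grows like $\log(1/\delta)$, the Gronwall constant $\exp(CT_0\log(1/\delta))=\delta^{-CT_0}$ overwhelms the desired bounds no matter how one shuffles the remaining terms.

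The point you are missing is that Brezis--Gallouet is the wrong interpolation here. The scaling built into the hypotheses is precisely such that a uniform $L^\infty$ bound can be bootstrapped: one uses the two-dimensional interpolation $\|v\|_{L^\infty}^2\lesssim\|v\|_{L^2}\,\|v\|_{H^2}$ together with the pair $\|v\|_{L^2}^2=\mathcal O(\delta)$ and $\|v\|_{H^2}^2=\mathcal O(\delta^{-1})$ (the latter coming from $\|\nabla\omega^{\epsilon,\delta}\|_{L^2}^2\le\bar F^{\epsilon,\delta}/\delta$ plus the small $\dive u^{\epsilon,\delta}$ contribution). The $\delta$ from the $L^2$-error exactly cancels the $\delta^{-1}$ from the $H^2$-norm, giving $\|v\|_{L^\infty}^2=\mathcal O(1)$ with a constant depending only on $C_0$, $K$, $T_0$. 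Brezis--Gallouet interpolates instead between $H^1$ and $H^2$, discards the decisive smallness of the $L^2$-error, and thus loses a log factor that nothing in the rest of the argument can repair. With the $L^2$--$H^2$ interpolation in place, the bootstrap assumption $\|u^{\epsilon,\delta}\|_{L^\infty}^2\le M$ is recovered with a strict improvement (to $M/2$), closing the argument without any logarithmic penalty.
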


\subsection{Preliminaries}
In this subsection we collect the tools and lemmas that will be used in the proofs.
First, denote by
\begin{equation} \label{eq:res_def}
        v = u^{\epsilon,\delta}-u, \quad q=\sqrt\epsilon(p^{\epsilon,\delta}-p), \quad V=\sqrt\delta(U^{\epsilon,\delta}-U)
\end{equation}
rescaled error functions on $[0,T_{\epsilon,\delta})\times \mathbb T^d$. The system for $W=(q,v,V)$ reads as
\begin{equation} \label{eq:res}
    \left \{ \
    \begin{aligned}
    &\partial_t q + \frac{1}{\sqrt\epsilon} \nabla \cdot v = -\sqrt \epsilon \partial_t p, \\
    &\partial_t v + \frac{1}{\sqrt\delta} \nabla \cdot V + \frac{1}{\sqrt\epsilon} \nabla q = 0, \\
    &\partial_t V + \frac{1}{\sqrt\delta}\nabla v = \frac{2u v + v\otimes v}{\sqrt\delta} - \frac{V}{\delta} - \sqrt \delta \partial_t U,
    \end{aligned}
    \right.
\end{equation}
with $uv := (u\otimes v+v\otimes u)/2$, and it is a symmetric hyperbolic system.

To control the norm of gradients, our proof will intensively invoke the standard Hodge identity for vector fields $v\in H^1(\mathbb T^d;\mathbb R^d)$ ($d=2,3$) \cite{temam2001ns}:
\begin{equation} \label{eq:hodge}
    \|\nabla v\|_{L^2(\mathbb T^d)}^2 = \|\dive v\|_{L^2(\mathbb T^d)}^2 + \|\curl v\|_{L^2(\mathbb T^d)}^2.
\end{equation}
In particular, the identity applies to any vector field obtained by taking spatial derivatives of $v$.

For the vorticity $\omega=\curl u$ of (\ref{eq:ns}) and $\omega^{\epsilon,\delta}=\curl u^{\epsilon,\delta}$ of (\ref{eq:relax}), we shall frequently make use of the Sobolev embedding $H^1(\mathbb T^3)\hookrightarrow L^6(\mathbb T^3)$, which gives 
\begin{equation} \label{eq:sob_emb}
    \|\phi\|_{L^6(\mathbb T^3)}\le C\|\nabla\phi\|_{L^2(\mathbb T^3)} \quad \text{for  } \phi=\omega^{\epsilon,\delta}-\omega, \text{ or } \ \omega,
\end{equation} 
since $\int_{\mathbb T^3} \phi\,dx=0$ in both cases. Here, the constant $C>0$ depends only on the domain $\mathbb T^3$.

Finally, we need a statement on the blow-up of solutions of an  inhomogeneous system of transport equations. 

\begin{lemma} [Lemma 2.8 of \cite{HRYZ25}] \label{lem:blowup}
For $m \in \mathbb N$ and $T>0$, let $W: [0,T)\times {\mathbb T^d}  \to \mathbb R^m$ with $W\in C^0([0,T);H^2(\mathbb T^d))\cap C^1([0,T);H^1(\mathbb T^d))$ be a classical solution of the initial value problem for 
\[ \partial_t U + \sum^d_{i=1} A_i \partial_{x_i}U = Q(U) + F \text{ in $[0,T)\times \mathbb T^d$.}  \]
Here $F\in L^1_{loc}(\mathbb R_+;H^2(\mathbb T^d))\cap C^0(\mathbb R_+;H^1(\mathbb T^d))$, $A_i=A^\top_i\in \mathbb R^{m\times m}$, and $Q=Q(U)$ being a nontrivial quadratic function of $U$ with $Q(0)=0$ are given. In particular, the cases $Q\equiv 0$ or linear are excluded.
If the maximal existing time $T$ is finite, then we have
    \[ \int^T_0 \|W(t,\cdot)\|_{L^\infty(\mathbb T^d)}\, dt=\infty. \]
\end{lemma}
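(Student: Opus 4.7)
The plan is to argue by contrapositive. Suppose $T<\infty$ is the maximal existing time but, contrary to the conclusion, $\int_0^T \|W(t,\cdot)\|_{L^\infty(\mathbb T^d)}\,dt<\infty$. The goal is to show that this assumption forces the $H^2$-norm of $W$ to remain bounded on $[0,T)$, so that Kato's local existence theorem \cite{kato1975} for symmetric hyperbolic systems, applied at the endpoint data $W(T,\cdot)\in H^2(\mathbb T^d)$, allows $W$ to be continued past $T$, contradicting maximality.

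The first key step is the $H^2$-energy estimate. Since the matrices $A_i$ are constant and symmetric, applying a derivative $\partial^\alpha$ with $|\alpha|\le 2$ to the system and testing against $\partial^\alpha W$ annihilates the principal part after integration by parts, yielding
\begin{equation*}
  \frac{1}{2}\frac{d}{dt}\|\partial^\alpha W\|_{L^2}^2 = \int_{\mathbb T^d}\partial^\alpha W\cdot\partial^\alpha\bigl(Q(W)+F\bigr)\,dx.
\end{equation*}
Because $Q$ is quadratic with $Q(0)=0$, the derivative $\partial^\alpha(Q(W))$ expands schematically into products $(\partial^\beta W)(\partial^{\alpha-\beta}W)$. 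The borderline contribution $(\nabla W)^2$ at $|\alpha|=2$ is controlled in $L^2$ by the interpolation inequality (\ref{eq:interp_d}) as $\|\nabla W\|_{L^4}^2\le C\|W\|_{L^\infty}\|\nabla^2 W\|_{L^2}$, while the remaining terms satisfy the standard Moser-tame bound $\|\partial^\alpha(Q(W))\|_{L^2}\le C\|W\|_{L^\infty}\|W\|_{H^2}$. Summing over $|\alpha|\le 2$ and applying Cauchy--Schwarz to the source term then produces the differential inequality
\begin{equation*}
  \frac{d}{dt}\|W\|_{H^2}^2 \le C\bigl(\|W\|_{L^\infty}+1\bigr)\|W\|_{H^2}^2 + \|F\|_{H^2}^2.
\end{equation*}

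Gronwall's inequality therefore yields
\begin{equation*}
  \|W(t)\|_{H^2}^2 \le \Bigl(\|W(0)\|_{H^2}^2+\int_0^t\|F(s)\|_{H^2}^2\,ds\Bigr)\exp\Bigl(C\int_0^t(\|W(s)\|_{L^\infty}+1)\,ds\Bigr),
\end{equation*}
and under the standing assumption, together with $F\in L^1_{loc}(\mathbb R_+;H^2(\mathbb T^d))$ and $T<\infty$, the right-hand side is finite uniformly on $[0,T)$. The equation itself then bounds $\partial_t W$ in $H^1$, so $W$ admits a continuous extension to $t=T$ in $H^2$, and Kato's local existence theorem applied to these endpoint data extends $W$ to some $[0,T+\tau)$, contradicting the maximality of $T$. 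The main obstacle is ensuring that the nonlinear estimate genuinely closes at the $H^2$ level: this relies on the dimensional restriction $d\le 3$ (through the interpolation (\ref{eq:interp_d}) and the embedding $H^2\hookrightarrow L^\infty$) and on the fact that $Q$ is at most quadratic; for higher-degree nonlinearities one would need to propagate a higher Sobolev norm, and for linear or null $Q$ the same argument degenerates into the standard linear well-posedness, which already rules out finite-time blow-up and makes the criterion vacuous.
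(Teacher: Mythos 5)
Your approach is the standard continuation argument for symmetric hyperbolic systems and is essentially the expected proof; the paper itself does not reproduce the argument (it cites Lemma 2.8 of \cite{HRYZ25} verbatim, remarking only that the two-dimensional proof carries over to $d=3$), so the comparison is against the canonical route, which you follow: derive an $H^2$ energy inequality whose coefficient is controlled by $\|W\|_{L^\infty}$, invoke the interpolation $\|\nabla W\|_{L^4}^2\le C\|W\|_{L^\infty}\|\nabla^2 W\|_{L^2}$ to treat the borderline $(\nabla W)^2$ commutator term, close with Gronwall, and continue the solution past $T$ via Kato's theorem. This is correct in substance, and your observation that $d\le 3$ is exactly what makes \eqref{eq:interp_d} applicable explains the paper's remark.

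One small technical slip: after Young's inequality you arrive at
\[
\frac{d}{dt}\|W\|_{H^2}^2 \le C\bigl(\|W\|_{L^\infty}+1\bigr)\|W\|_{H^2}^2 + \|F\|_{H^2}^2,
\]
whose Gronwall closure requires $\int_0^T\|F\|_{H^2}^2\,dt<\infty$, i.e.\ $F\in L^2_{loc}$ in time, whereas the hypothesis only provides $F\in L^1_{loc}(\mathbb R_+;H^2)$. You should instead keep the source term linear in $\|F\|_{H^2}$: setting $y(t)=\|W(t)\|_{H^2}$, Cauchy--Schwarz directly gives
\[
\frac{dy}{dt} \le C\bigl(\|W\|_{L^\infty}+1\bigr)y + \|F\|_{H^2},
\]
and Gronwall then needs only $\int_0^T\|F\|_{H^2}\,dt<\infty$, which matches the stated assumption. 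With this adjustment the argument closes exactly as you describe, and the final continuation step (bounding $\partial_t W$ in $H^1$ from the equation, extracting $W(T)\in H^2$ as the limit, and reapplying Kato) is fine modulo the routine uniqueness remark that glues the two solutions.
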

The original lemma in \cite{HRYZ25} is stated for $d=2$, but the proof therein works equally well for $d=3$.

\section{Vanishing amplitude of initial velocity perturbations on $\mathbb T^3$} \label{sec:small} 
This section is devoted to the proofs of Theorem~\ref{thm:uconverg_H1}, Corollary~\ref{cor:convergence}, and Theorem~\ref{thm:pressure_3d} in the regime (\ref{eq:small_Linf}). All involved norms are for $\mathbb T^3$. The linear symmetric hyperbolic structure of (\ref{eq:res}) is crucially exploited with the classical energy estimates. Throughout this section, the generic positive constant $C$ depends only on $C_0$ and the smooth solution $(p,u,U)$, and may take different values from line to line.

\subsection{Proof of Theorem~\ref{thm:uconverg_H1}} \label{subsec:thm:uconverg_H1}

Consider the system (\ref{eq:res}) for the residual $W=(q,v,V)$ in (\ref{eq:res_def}). Define an energy function as
\[
    E(t) := {\| W(t,\cdot)\|}_{H^1}^2 + (\epsilon+\delta)^a {\| \nabla^2 W(t,\cdot)\|}_{L^2}^2.
\]
It can be shown from the theorem assumption (\ref{eq:uH1_assump}) that the initial energy is bounded as
\[ E(0) \le C(\epsilon+\delta) \]
for $\epsilon+\delta\ll 1$. 

We then exploit the linear structure of the first-order operator in \eqref{eq:res} and the growth condition for the relaxation terms to find the estimate
\begin{equation} \label{eq:energy_est_1}
    \frac{d}{dt}E(t) \le C\left( \sqrt{(\epsilon+\delta)E(t)} + E(t) + (\epsilon+\delta)^{-\frac{a}{2}}E(t)^2 \right),
\end{equation}
when the solutions to both (\ref{eq:ns}) and (\ref{eq:relax}) exist. We give details to derive (\ref{eq:energy_est_1}) for the reader's reference. Taking for $\ell\in \{0,1,2\}$, the $\ell$-th spatial derivative of the equations in (\ref{eq:res}), multiplying each equation by $\nabla^\ell q$, $\nabla^\ell v$, and $\nabla^\ell V$, respectively, and integrating over $\mathbb T^3$, we obtain
\begin{equation} \label{eq:perform_energy_est}
\begin{aligned}
        \frac{1}{2}\frac{d}{dt}{\| \nabla^\ell W\|}_{L^2}^2 &= \int_{\mathbb T^3} \bigg[
        -\sqrt\epsilon (\nabla^\ell q) (\partial_t\nabla^\ell p) - \sqrt\delta (\nabla^\ell V):(\partial_t\nabla^\ell U) \\
       & \qquad \qquad \qquad + \frac{\nabla^\ell V}{\sqrt\delta}:\left( \nabla^\ell (2uv+v\otimes v) - \frac{\nabla^\ell V}{\sqrt\delta} \right) 
        \bigg] \, dx \\
        &\le C\left( \sqrt\epsilon {\| \nabla^\ell q\|}_{L^2} + \sqrt\delta {\|\nabla^\ell V\|}_{L^2} \right) + {\|\nabla^\ell (2uv+v\otimes v)\|}_{L^2}^2.
\end{aligned}
\end{equation}
Here, the estimate \eqref{eq:perform_energy_est} results from H{\"o}lder's inequality and the elementary fact 
\[
  (\nabla^\ell V/\sqrt\delta):[\nabla^\ell(2uv+v\otimes v)]\le |\nabla^\ell V|^2/\delta + |\nabla^\ell(2uv+v\otimes v)|^2.
\]
It hence follows that
\[
\begin{aligned}
    \frac{d}{dt}E(t) \le \, & C \sqrt\epsilon\left( \|q\|_{H^1} + (\epsilon+\delta)^a {\| \nabla^2 q\|}_{L^2} \right) + C\sqrt\delta\left( {\|V\|}_{H^1} + (\epsilon+\delta)^a \|\nabla^2 V\|_{L^2} \right) \\ 
    &+ C\left( \|v\|_{H^1}^2 + (\epsilon+\delta)^a \|v\|_{H^2}^2 \right) + A \\
    \le \, & C \left( \sqrt{(\epsilon+\delta)E(t)} + E(t) \right) + A
\end{aligned}
\]
for $\epsilon+\delta\ll 1$. Here $A=A(t):=C\sum_{\ell=0}^2 A_\ell(t)$, with
\begin{equation} \label{eq:As}
    A_0(t) = \|v\|_{L^4}^4, \quad A_1(t)= \|v\|_{L^\infty}^2 \|\nabla v\|_{L^2}^2, \quad
    A_2(t) = (\epsilon+\delta)^a \|v\|_{L^\infty}^2 \|\nabla^2 v\|_{L^2}^2
\end{equation}
serving as upper bounds for ${\|v\otimes v\|}_{L^2}^2$, ${\| \nabla (v\otimes v)\|}_{L^2}^2$, and $(\epsilon+\delta)^a {\| \nabla^2 (v\otimes v)\|}_{L^2}^2$, respectively, thanks to Lemma~\ref{lem:interp_ineq}. One can further deduce, term by term, that
\[ A_\ell(t) \le C (\epsilon+\delta)^{-a/2} E(t)^2, \quad \ell \in \{0,1,2\} \]
holds for $\epsilon+\delta\ll 1$. This can be verified by using Lemma~\ref{lem:interp_ineq} and the elementary facts that $E(t)\ge \|v\|_{H^1}^2$ and $(\epsilon+\delta)^{-a/2}E(t)\ge 2\|v\|_{H^1} {\| \nabla^2 v\|}_{L^2}$. The estimate (\ref{eq:energy_est_1}) is thus established.

We hence deduce from (\ref{eq:energy_est_1}) that, for either $t<T_{\epsilon,\delta}\le T_0$ or $t\le T_0<T_{\epsilon,\delta}$, 
\begin{equation} \label{eq:timebound}
    t \ge C \int_{C(\epsilon+\delta)}^{E(t)} \frac{dy}{\sqrt{(\epsilon+\delta)y}+y+(\epsilon+\delta)^{-\frac{a}{2}}y^2}
    \ge C \int_{C(\epsilon+\delta)}^{\min\{E(t),C(\epsilon+\delta)^{\frac{a}{2}}\}} \frac{dy}{y},
\end{equation}
since $\sqrt{(\epsilon+\delta)y}\le C^{-1/2} y$ and $(\epsilon+\delta)^{-a/2}y^2 \le Cy$ for $y\in[C(\epsilon+\delta), C(\epsilon+\delta)^{a/2}]$. As $t\to T_{\epsilon,\delta}$, it holds that $E(t)\to\infty$ (due to the blow-up criterion from Lemma \ref{lem:blowup}) and hence
\[
    T_{\epsilon,\delta}\ge C'(2-a)|\log(\epsilon+\delta)|,
\]
with $C'$ depending only on $C_0$ and the smooth solution $(p,u,U)$. This verifies the lower bound of $T_{\epsilon,\delta}$ in (\ref{eq:Texist}). 
    
On the other hand, for any $T>0$ such that both $u^{\epsilon,\delta}$ and $u$ exist on $[0,T]$, if $\epsilon+\delta$ is so small that
\[
    C'(2-a)|\log (\epsilon+\delta)| > T,
\]
then it is only possible from (\ref{eq:timebound}) that $E(t)\le C(\epsilon+\delta) e^{t/C}$ for $t\le T$. This amounts to
\begin{equation} \label{eq:E_bound}
    \sup_{t\in[0,T]} E(t) \le C_T (\epsilon+\delta)
\end{equation}
with $C_T$ depending only on $C_0$, $T$ and $(p,u,U)$, which immediately leads to the bound (\ref{eq:u_converg}) for $\|u^{\epsilon,\delta}-u\|_{H^1}$. 
    
Finally, the uniform boundedness (\ref{eq:u_Linf_bound}) of ${\| u^{\epsilon,\delta}\|}_{L^\infty}$ follows from (\ref{eq:E_bound}) and (\ref{eq:interp_inf})
\[
    \|v(t,\cdot)\|_{L^\infty}^2 \le C\|v\|_{H^1} \|v\|_{H^2} \le C \|v\|_{H^1}^2 + C\|v\|_{H^1}\|\nabla^2 v\|_{L^2} \le CE+C(\epsilon+\delta)^{-\frac{a}{2}}E \le C_{T}
\]
for all $t\le T$ and $(\epsilon,\delta)$, due to $a<2$. This completes the proof.

\subsection{Proof of Corollary~\ref{cor:convergence}} \label{subsec:cor:convergence}

Define the same residual $W=(q,v,V)$ as in (\ref{eq:res_def}), which is governed by (\ref{eq:res}). Take an energy as
\[
    E(t):=\|W(t,\cdot)\|_{L^2}^2 + (\epsilon+\delta)^a \|\nabla^2 W(t,\cdot)\|_{L^2}^2,
\]
and hence $E(0)\le C(\epsilon+\delta)$. The energy estimate gives, for $t\in[0,T_0]$,
\begin{equation} \label{eq:energy_est_2}
    \frac{d}{dt}E(t) \le C\left( \sqrt{(\epsilon+\delta)E(t)} + E(t) + (\epsilon+\delta)^{-\frac{3a}{4}}E(t)^2 \right).
\end{equation}
Similar to the proof of Theorem~\ref{thm:uconverg_H1}, the key to derive (\ref{eq:energy_est_2}) is to show that
\[ A_\ell(t) \le C(\epsilon+\delta)^{-3a/4}E^2, \quad \ell \in \{0,2\} \]
(noticing that the first-order derivatives are not included in $E$), where the $A_\ell$'s are defined in (\ref{eq:As}). This can be done by using Lemma~\ref{lem:interp_ineq} and $E\ge \frac{1}{4}\|v\|_{L^2}^2+\frac{3}{4}(\epsilon+\delta)^a\|\nabla^2v\|_{L^2}^2\ge (\epsilon+\delta)^{3a/4}\|v\|_{L^2}^{1/2}\|\nabla^2v\|_{L^2}^{3/2}$.

With (\ref{eq:energy_est_2}), we set $a'=3a/2\in[0,2)$, and the remaining steps are exactly the same as in the proof of Theorem~\ref{thm:uconverg_H1}.

\subsection{Proof of Theorem~\ref{thm:pressure_3d}} \label{subsec:thm:pressure_3d}

Consider $T>0$ such that both $(p^{\epsilon,\delta},u^{\epsilon,\delta},U^{\epsilon,\delta})$ and $(p,u,U)$ exist on $[0,T]$. Following the procedure in the proof of Theorem 2.2 in \cite{HRYZ25}, we consider an intermediate affine  system of transport equations for the unknowns $(p',u',U')$ given by 
\begin{equation} \label{eq:intermediate}
\left \{ \
\begin{aligned}
  &\epsilon \partial_t p' + \nabla \cdot u' =0, \\
  &\partial_t u' + \nabla \cdot U' + \nabla p' = 0, \\
  &\delta \partial_t U' + \nabla u' = u\otimes u - U', \\
  &u'(0,\cdot)=u^{\epsilon,\delta}_0, \ p'(0,\cdot)=p^{\epsilon,\delta}_0, \ U'(0,\cdot)=U^{\epsilon,\delta}_0.
\end{aligned}
\right.
\end{equation}
The key strategy is to estimate ${\|p^{\epsilon,\delta}-p'\|}_{H^1}$ and ${\|p'-p\|}_{H^1}$ separately so as to control ${\|p^{\epsilon,\delta}-p\|}_{H^1}$, and to follow the same approach for ${\|u^{\epsilon,\delta}-u\|}_{H^1}$.
In particular, in \textbf{Step I} below, we will show
\begin{equation} \label{eq:pprime_p_h1}
    \sup_{t\in[0,T]}  \sqrt\epsilon\|(p'-p)(t,\cdot)\|_{H^1}\le C_T(\epsilon+\delta),
\end{equation}
followed by \textbf{Step II} validating 
\begin{equation} \label{eq:uprime_u_h1}
    \sup_{t\in[0,T]} \|(u'-u)(t,\cdot)\|_{H^1}\le C_T(\epsilon+\delta),
\end{equation}
and finally in \textbf{Step III}
\begin{equation} \label{eq:relax_interm}
    \sup_{t\in[0,T]} \|(u^{\epsilon,\delta}-u')(t,\cdot)\|_{H^1} +  \sqrt\epsilon\|(p^{\epsilon,\delta}-p')(t,\cdot)\|_{H^1}\le C_T(\epsilon+\delta)
\end{equation}
for $\epsilon+\delta\ll 1$, where the constant $C_T$ depends only on $C_0$, $a$, $T$, and the smooth solution $(p,u,U)$.

Before we proceed with \textbf{Steps I},{\bf II}, {\bf III}, notice that (\ref{eq:intermediate}), as an inhomogeneous system with smooth initial data and right-hand-side with linear growth, admits a solution on $[0,T]$. Analogous to (\ref{eq:perform_energy_est}), it is straightforward to get
\[
    \frac{1}{2}\frac{d}{dt} \left\|\nabla^2 \left( \sqrt\epsilon p', u', \sqrt\delta U' \right) \right\|_{L^2}^2
    = \int_{\mathbb T^3} \nabla^2U':\nabla^2(u\otimes u-U') \, dx
    \le \frac{1}{4} \|u\otimes u\|_{H^2}^2 \le C
\]
for $t\in [0,T]$. By the initial condition (\ref{eq:3dp_assump_3}), we have
\begin{equation} \label{eq:uprime_h2}
    \left\|\Delta \left( \sqrt\epsilon p', u', \sqrt\delta U' \right)(t,\cdot) \right\|_{L^2}^2 \le C(\epsilon+\delta)^{-2a} + CT_0 \le C(\epsilon+\delta)^{-2a}
\end{equation}
for $t\in [0,T]$ and $\epsilon+\delta\ll 1$. This estimate will be used later.

The remainder of the proof consists of the three steps.

\textbf{Step I}. 
By rearranging (\ref{eq:intermediate}) we see that $p'$ is governed by
\begin{equation} \label{eq:plinear}
    \epsilon\delta\partial_t^3 p' - (\epsilon+\delta)\partial_t \Delta p' + \epsilon \partial_t^2 p' - \Delta (p'-p) = 0,
\end{equation}
which is derived in the same way as in  Proposition 4.2 of \cite{HRYZ25}. Introducing
\begin{subequations}
\begin{align}
    f &= \delta \partial_t p' + p'-p, \label{eq:fdef} \\
    g &=\sqrt{\epsilon\delta}\partial_t\nabla p',
\end{align}
\end{subequations}
we obtain
\begin{subequations} \label{eq:fg}
    \begin{align}
        (\epsilon\partial_t^2-\Delta)f - \sqrt{\frac{\epsilon}{\delta}} \nabla\cdot g &= \epsilon \partial_t^2 p, \label{eq:fga} \\
        \partial_t g - \sqrt{\frac{\epsilon}{\delta}} \nabla \partial_t f &= -\frac{g}{\delta} + \sqrt{\frac{\epsilon}{\delta}} \partial_t \nabla p, \label{eq:fgb}
    \end{align}
\end{subequations}
subject to
\[
\begin{aligned}
    f(0,\cdot) &= -\frac{\delta}{\epsilon}\dive u'(0,\cdot) + p'(0,\cdot)-p(0,\cdot), \\
    \partial_t f(0,\cdot) &= \frac{\delta}{\epsilon}\left( \dive^2 U'(0,\cdot) + \Delta p'(0,\cdot) \right) - \frac{1}{\epsilon}\dive u'(0,\cdot) - \partial_tp(0,\cdot), \\
    g(0,\cdot)&= -\sqrt{\frac{\delta}{\epsilon}} \nabla\dive u'(0,\cdot).
\end{aligned}
\]
Here, (\ref{eq:fga}) is the reformulation of (\ref{eq:plinear}), and (\ref{eq:fgb}) is the restriction given by the definition of $f$ and $g$. The system (\ref{eq:fg}) is endowed with an energy
\[
    E(t)=\int_{\mathbb T^3} \left( \epsilon |\partial_t f|^2 + |\nabla f|^2 + |g|^2 \right)\, dx
\]
with
\[ \epsilon E(0) \le C(\epsilon+\delta)^2 \]
(see (4.16) in \cite{HRYZ25} for a derivation of the bound for $E(0)$ in 2D, which works equally well for 3D). Then, the energy estimate reads as
\[
\begin{aligned}
    \frac{d}{dt}E(t) &\le 2\epsilon\int_{\mathbb T^3} (\partial_t f) (\partial_t^2p) \, dx + 2\int_{\mathbb T^3} \left( \frac{g}{\sqrt\delta} \sqrt\epsilon \partial_t\nabla p - 2\frac{|g|^2}{\delta} \right) \, dx\\
    &\le C \epsilon \|\partial_t f\|_{L^2} + C\epsilon \\
    &\le C\left( \sqrt{\epsilon E(t)} + \epsilon \right),
\end{aligned}
\]
implying that
\[
    \epsilon E(t)\le \epsilon E(0) + C \epsilon^2(1+t^2)
\]
for $t\in[0,T]$ and hence
\begin{equation} \label{eq:fg_energy_est}
    \sup_{t\in[0,T]} \epsilon E(t) \le C_T(\epsilon+\delta)^2 
\end{equation}
by using the bound for $E(0)$.

Now we can get a refined estimate of $\nabla(p'-p)$. We view (\ref{eq:fdef}) as a first-order ODE
\[
    \partial_t \mathcal{A} p' + \frac{1}{\delta} \mathcal{A}p' = \frac{1}{\delta} \mathcal{A}f + \frac{1}{\delta} \mathcal{A}p
\]
for $\mathcal{A} \in \{\nabla,\, \partial_t\}$, and the solution gives
\begin{equation} \label{eq:Apprime_p}
\begin{split}
    \mathcal{A}(p'-p)(t) =\, & e^{-\frac{t}{\delta}} \left[	\mathcal{A}(p'-p)(0) + \mathcal{A}(p(0)-p(t)) \right] \\
    &+ \int^t_0 \frac{1}{\delta} e^{\frac{t'-t}{\delta}} \mathcal{A}f(t')\,dt'
    + \int^t_0 \frac{1}{\delta} e^{\frac{t'-t}{\delta}} \mathcal{A}(p(t')-p(t))\,dt'.
\end{split}
\end{equation}

For $\mathcal{A}=\nabla$, we have 
\[
\begin{aligned}
    \sup_{t\in[0,T]} \sqrt\epsilon{ \| \nabla(p'-p)(t,\cdot)\|}_{L^2} &\le \sqrt\epsilon{\| \nabla (p'-p)(0,\cdot)\|}_{L^2} + \sup_{t\in[0,T]} \sqrt{\epsilon E(t)} + C_T\sqrt\epsilon\delta \\
    &\le C_T(\epsilon+\delta).
\end{aligned}
\]
Here, the first line is derived in the same way as (4.14) in \cite{HRYZ25}; note that the term $\sup_{t\in[0,T]} e^{-\frac{t}{\delta}}\|\nabla p(0) - \nabla p(t) \|_{L^2}$ is bounded by $C\sup_{t\in[0,T_0]} te^{-\frac{t}{\delta}} = C\delta$. The second line follows directly from (\ref{eq:3dp_assump_1}) and (\ref{eq:fg_energy_est}). This suffices to ensure (\ref{eq:pprime_p_h1}) by the facts that $\int_{\mathbb T^3}p\,dx=0$, $\frac{d}{dt}\int_{\mathbb T^3} p'\,dx=0$, and the Poincar{\'e} inequality:
\[
\begin{aligned}
    \sqrt\epsilon\|(p'-p)(t,\cdot)\|_{L^2} &\le C\sqrt\epsilon\left| \int_{\mathbb T^3} (p'-p)(0,x)\,dx \right| + C\sqrt\epsilon{\|\nabla(p'-p)(t,\cdot)\|}_{L^2} \\
    &\le C\sqrt\epsilon{\|(p'-p)(0,\cdot)\|}_{L^2} + C\sqrt\epsilon{\|\nabla(p'-p)(t,\cdot)\|}_{L^2} \\
    &\le C(\epsilon+\delta).
\end{aligned}
\]

\textbf{Step II}. Now we estimate ${\|u'-u\|}_{H^1} \le {\|u'-u\|}_{L^2}+{\|\nabla(u'-u)\|}_{L^2}$. Noting that
\[
    \partial_t\int_{\mathbb T^3} u'(t,x)\,dx=\partial_t\int_{\mathbb T^3} u(t,x)\,dx=0,
\]
and using the Poincar\'e inequality, we obtain
\[
\begin{aligned}
    {\|u'-u\|}_{L^2} &\le \left | \int_{\mathbb T^3} (u'-u)(0,x)\,dx \right| + {\|\nabla(u'-u)\|}_{L^2} \\
    &\le {\|(u'-u)(0,\cdot)\|}_{L^2} + {\|\nabla(u'-u)\|}_{L^2}.
\end{aligned}
\]
Thanks to (\ref{eq:3dp_assump_1}), it suffices to prove
\begin{equation}
    \sup_{t\in[0,T]}{\| \nabla(u'-u)(t,\cdot)\|}_{L^2} \le C_T(\epsilon+\delta).
\end{equation}
for a validation of (\ref{eq:uprime_u_h1}). For this purpose, we use (\ref{eq:hodge}) as $\|\nabla(u'-u)\|_{L^2}^2 = \|\dive u'\|_{L^2}^2 + \|\curl(u'-u)\|_{L^2}^2$ (noting that $\dive u=0$) and prove 
\begin{equation} \label{eq:divuprime}
    \sup_{t\in[0,T]} {\| \dive u'(t,\cdot)\|}_{L^2} \le C_T(\epsilon+\delta)
\end{equation}
and
\begin{equation} \label{eq:curluprime_u_L2}
    \sup_{t\in[0,T]}{ \| \curl(u'-u)(t,\cdot)\|}_{L^2} \le C_T(\epsilon+\delta),
\end{equation}
respectively.

First, (\ref{eq:divuprime}) can be derived by taking $\mathcal{A} = \partial_t$ in (\ref{eq:Apprime_p}) and using $\partial_t p' = -\epsilon^{-1} \dive u'$.

For (\ref{eq:curluprime_u_L2}), denote $\omega' = \curl u' \in\mathbb R^3$ and $\Omega'=\curl U'\in \mathbb R^{3\times 3}$. Here the $j$th column of $\Omega'$ is defined as $\Omega'_{\cdot,j}=\curl U'_{\cdot,j}$ for $j=1,2,3$.
The governing equations for $\omega'$ and $\Omega'$ read as
\[
\left\{ 
\begin{aligned}
    &\partial_t \omega' + \nabla\cdot\Omega'=0, \\
    &\delta \partial_t \Omega' + \nabla \omega' = \curl(u\otimes u)-\Omega'.
\end{aligned}
\right.
\]
Correspondingly, denote by $\omega = \curl u$ and $\Omega=\curl U$ the vorticity of (\ref{eq:ns}), and these variables are governed by
\[
\left\{ 
\begin{aligned}
    &\partial_t \omega + \nabla\cdot\Omega=0, \\
    &\Omega= - \nabla \omega + \curl(u\otimes u).
\end{aligned}
\right.
\]
Denoting
\[
    \xi=\frac{\omega'-\omega}{\sqrt\delta}, \quad
    X = \Omega'-\Omega - e^{-\frac{t}{\delta}}\left( \Omega'(0,\cdot)-\Omega(0,\cdot) \right),
\]
we have
\begin{equation}
\left\{ 
\begin{aligned}
    & \partial_t\xi + \frac{1}{\sqrt\delta} \nabla\cdot X = -\frac{1}{\sqrt\delta} e^{-\frac{t}{\delta}} \nabla\cdot \left( \Omega'(0,\cdot)-\Omega(0,\cdot) \right), \\
    & \partial_t X + \frac{1}{\sqrt\delta}\nabla \xi = -\frac{X}{\delta} - \partial_t\Omega
\end{aligned}
\right.
\end{equation}
subject to $X(0,\cdot)=0$ and $\sqrt\delta\|\xi(0,\cdot)\|_{L^2} = {\| (\omega'-\omega)(0,\cdot)\|}_{L^2} \le C(\epsilon+\delta)$ by theorem assumption (\ref{eq:3dp_assump_1}).

The energy estimate is
\[
\begin{aligned}
    \frac{d}{dt}{\| (\xi,X)\|}_{L^2}^2 &\le \frac{C}{\sqrt\delta} e^{-\frac{t}{\delta}} {\| \nabla\cdot \left( \Omega'(0,\cdot)-\Omega(0,\cdot) \right)\|}_{L^2}\|\xi\|_{L^2} + C\delta \\
    &\le \frac{C(\epsilon+\delta)}{\delta^{\frac{3}{2}}} e^{-\frac{t}{\delta}} \|\xi\|_{L^2} + C\delta,
\end{aligned}
\]
where we used $\| \dive(\Omega'-\Omega)(0,\cdot)\|_{L^2}\le \|\nabla \dive (U'-U)(0,\cdot)\|_{L^2}\le C(\epsilon+\delta)$. To proceed, one may further define
\[
    h=\left( \|(\xi,X)\|_{L^2}^2 + \delta \right)^{\frac{1}{2}},
\]
and the above energy estimate implies that
\[
\begin{aligned}
    \frac{dh}{dt} &\le Ch + \frac{C(\epsilon+\delta)}{\delta^{\frac{3}{2}}}e^{-\frac{t}{\delta}}, \\
    h(0) &\le \frac{C(\epsilon+\delta)}{\sqrt\delta},
\end{aligned}
\]
which can be solved as
\[
    h(t) \le \frac{C(\epsilon+\delta)}{\sqrt\delta}e^{Ct}\left(1+\frac{1}{C\delta+1}\left( 1-e^{-\left(C+\frac{1}{\delta}\right)t} \right)\right) \le \frac{C_{T_0}(\epsilon+\delta)}{\sqrt\delta}
\]
for $t\in[0,T]$. Therefore, (\ref{eq:curluprime_u_L2}), and hence (\ref{eq:uprime_u_h1}), follow immediately.

Moreover, using (\ref{eq:interp_inf}), we see that
\begin{equation} \label{eq:uprime_u_Linf}
    \|u'-u\|_{L^\infty}^2\le C\|u'-u\|_{H^1}\|u'-u\|_{H^2}
    \le C(\epsilon+\delta) + C(\epsilon+\delta)^{1-a} \le C,
\end{equation}
where the second inequality results from (\ref{eq:uprime_u_h1}) and (\ref{eq:uprime_h2}). This immediately leads to
\begin{equation} \label{eq:uprime_Linf}
    \sup_{t\in[0,T]} \|u'(t,\cdot)\|_{L^\infty} \le C_T.
\end{equation}
These estimates will be needed in the sequel.

\textbf{Step III}. Finally, we estimate the difference between (\ref{eq:intermediate}) and (\ref{eq:relax}). For this purpose, we denote
\[
    v = u^{\epsilon,\delta}-u', \quad
    q = \sqrt{\epsilon}\left(p^{\epsilon,\delta}-p'\right), \quad
    V = \sqrt{\delta}\left(U^{\epsilon,\delta}-U'\right).
\]
Then, the system for $W=(q,v,V)$ reads as
\[
\left\{
\begin{aligned}
    &\partial_t q + \frac{1}{\sqrt{\epsilon}}\nabla\cdot v=0, \\
    &\partial_t v + \frac{1}{\sqrt{\epsilon}}\nabla q + \frac{1}{\sqrt{\delta}}\nabla\cdot V = 0, \\
    &\partial_t V + \frac{1}{\sqrt{\delta}}\nabla v = \frac{v\otimes v+2u'v}{\sqrt\delta} + \frac{u'-u}{\sqrt\delta}\otimes u' + u\otimes \frac{u'-u}{\sqrt\delta}-\frac{V}{\delta},
\end{aligned}
\right.
\]
with $u'v:=(u'\otimes v+v\otimes u')/2$, subject to initial datum $W(0,\cdot)=0$.

Define the energy as
\[
    E(t) = \|W(t,\cdot)\|_{H^1}^2 + (\epsilon+\delta)^{2a+2} \|\nabla^2 W(t,\cdot)\|_{L^2}^2,
\]
and the initial conditions (\ref{eq:3dp_assump_1})-(\ref{eq:3dp_assump_3}) imply that $E(0)\le C(\epsilon+\delta)^2$. An energy estimate gives
\begin{equation} \label{eq:energy_est_3}
    \frac{d}{dt}E(t) \le C\left(\|v\|_{L^\infty}^2+1\right)E(t) + C\left(\epsilon+\delta\right)^2(\|v\|_{L^\infty}^2+1).
\end{equation}
Indeed, analogous to (\ref{eq:perform_energy_est}), we have
\[ 
\begin{aligned}
    \frac{1}{2}\frac{d}{dt}\|\nabla^\ell W\|_{L^2}^2 &\le  C\left(\|\nabla^\ell (v\otimes v)\|_{L^2}^2 + \|\nabla^\ell (u'v)\|_{L^2}^2 + \|\nabla^\ell \left((u'-u)\otimes u'+u\otimes (u'-u)\right)\|_{L^2}^2 \right) \\
    &=:C\left( I^\ell_1 + I^\ell_2 + I^\ell_3 \right) 
\end{aligned}
\]
for $\ell=0,1,2$ and that
\[
\begin{aligned}
    I^0_1 &\le C\|v\|_{L^\infty}^2\|v\|_{L^2}^2 \le C\|v\|_{L^\infty}^2E(t), \quad
    I^0_2 \le C\|v\|_{L^2}^2 \le CE(t), \\
    I^0_3 &\le  \|u'-u\|_{L^2}^2 \left( \|u'\|_{L^\infty}^2+\|u\|_{L^\infty}^2 \right) \le C \|u'-u\|_{L^2}^2 \le C(\epsilon+\delta)^2, \\
    I^1_1 &\le C \|v\|_{L^\infty}^2 \|\nabla v\|_{L^2}^2 \le C\|v\|_{L^\infty}^2 E(t), \\
    I^1_2 & \le \| \nabla \left((u'-u) v \right)\|_{L^2}^2 + \|\nabla(uv)\|_{L^2}^2 \\ 
    &\le C \left( \|v\|_{L^\infty}^2 \|\nabla(u'-u)\|_{L^2}^2 + \|u'-u\|_{L^\infty}^2 \|\nabla v\|_{L^2}^2 + \|v\|_{H^1}^2 \right) \\
    &\le C\|v\|_{L^\infty}^2 (\epsilon+\delta)^2 + CE, \\
    I^1_3 &\le C\left( {\| \nabla\left( (u'-u)\otimes (u'-u) \right)\|}_{L^2}^2 + {\| \nabla\left( (u'-u)\otimes u \right)\|}_{L^2}^2 \right) \\
    & \le C\left(\|u'-u\|_{L^\infty}^2\|u'-u\|_{H^1}^2 + \|u'-u\|_{H^1}^2\right) \le C(\epsilon+\delta)^2, \\
    (\epsilon+\delta)^{2a+2}I^2_1 &\le  C(\epsilon+\delta)^{2a+2} \|v\|_{L^\infty}^2\|\nabla^2 v\|_{L^2}^2 \le C\|v\|_{L^\infty}^2 E(t), \\
    (\epsilon+\delta)^{2a+2}I^2_2 &\le C(\epsilon+\delta)^{2a+2} \left( \|u'\|_{L^\infty}^2 \|\nabla^2v\|_{L^2}^2 + \|\nabla v\|_{L^2}^2 + \|v\|_{L^2}^2 + \|v\|_{L^\infty}^2 {\| \nabla^2(u'-u)\|}_{L^2}^2 \right) \\
    &\le CE(t) + C\|v\|_{L^\infty}^2 (\epsilon+\delta)^{2a+2}\left( \|\nabla^2 u'\|_{L^2}^2 + \|\nabla^2 u\|_{L^2}^2 \right) \\
    &\le CE(t) + C\|v\|_{L^\infty}^2 (\epsilon+\delta)^2,
\end{aligned}
\]
and
\[
\begin{aligned}
    (\epsilon+\delta)^{2a+2}I^2_3 &\le C (\epsilon+\delta)^{2a+2} \left( {\| \nabla^2\left( (u'-u)\otimes (u'-u) \right)\|}_{L^2}^2 + {\| \nabla^2\left( (u'-u)\otimes u \right)\|}_{L^2}^2 \right) \\
    &\le C (\epsilon+\delta)^{2a+2} \left({\|u'-u\|_{L^\infty}^2 \| \nabla^2(u'-u)\|}_{L^2}^2 + \|u'-u\|_{H^2}^2 \right) \\
    & \le C(\epsilon+\delta)^2
\end{aligned}
\]
for $\epsilon+\delta\ll 1$, where (\ref{eq:uprime_u_h1}), (\ref{eq:uprime_u_Linf}) and (\ref{eq:uprime_Linf}) were used throughout; moreover, in the estimate of $I^2_2$ and $I^2_3$, we used (\ref{eq:uprime_h2}). The estimate (\ref{eq:energy_est_3}) thus follows.

Further noticing from (\ref{eq:interp_inf}) that
\[ {\|v\|}_{L^\infty}^2 \le C {\|v\|}_{H^1} {\|v\|}_{H^2} \le C(\epsilon+\delta)^{-a-1}E(t), \]
we derive from (\ref{eq:energy_est_3}) that
\[
    \frac{d}{dt}E(t) \le C\left( (\epsilon+\delta)^2 + E(t) + (\epsilon+\delta)^{-a-1}E(t)^2 \right),
\]
and hence
\[
    t \ge C\int_{C(\epsilon+\delta)^2}^{E(t)} \frac{dy}{(\epsilon+\delta)^2+y+(\epsilon+\delta)^{-a-1}y^2} 
    \ge C\int_{C(\epsilon+\delta)^2}^{\min\{E(t),C(\epsilon+\delta)^{a+1}\}} \frac{dy}{y}.
\]
Then the same argument as in the proof of Theorem~\ref{thm:uconverg_H1} yields that the solution $(u^{\epsilon,\delta},p^{\epsilon,\delta},U^{\epsilon,\delta})$ of (\ref{eq:relax}) exists on $[0,T]$. Moreover, for sufficiently small $\epsilon+\delta\ll 1$, it is only possible that
\[
    \sup_{t\in[0,T]} E(t) \le C_T(\epsilon+\delta)^2,
\]
which immediately gives (\ref{eq:relax_interm}). This completes the proof of Theorem~\ref{thm:pressure_3d} under $\delta=\mathcal O(\sqrt\epsilon)$. 

\section{Order-one initial velocity perturbations} \label{sec:large}

This section is devoted to the proofs of Theorems~\ref{thm:critical_3d}, \ref{thm:critical_3d_inhomo} and \ref{thm:critical_2d} in the more challenging regime (\ref{eq:small_Linf}). Now, a direct energy method applied to the rescaled symmetric hyperbolic system (\ref{eq:res}) is no longer sufficient to yield convergence in a ``long'' time interval. For this reason, we adopt the modulated energy method developed in \cite{Brenier2003}, which allows one compare the relaxed dynamics with the target incompressible flow at the correct level. Moreover, due to the presence of the artificial compressibility term, the velocity field is no longer divergence-free, and an additional smallness of $\dive u^{\epsilon,\delta}$ is required. Using a careful bootstrap argument, we are able to achieve global convergence results under the regime $\epsilon=o(\delta)$.

We shall start with some technical preparations related to the vorticity $\omega = \curl u$ for (\ref{eq:ns}) and $\omega^{\epsilon,\delta} = \curl u^{\epsilon,\delta}$ for (\ref{eq:relax}). Here we work on $\mathbb T^3$, so both $\omega$ and $\omega^{\epsilon,\delta}$ are vector-valued functions. Nevertheless, all the results here also holds on $\mathbb T^2$, which will be processed in Section~\ref{subsec:thm:critical_2d}.

With the $\curl$-operator acting on (\ref{eq:nsb}) and denoting 
\begin{equation} \label{eq:A0_def}
    A:=\curl\dive(u\otimes u) = u\cdot \nabla \omega - \omega \cdot \nabla u,
\end{equation}
we see that the components of $\omega$ are governed by the inhomogeneous heat equations 
\begin{equation} \label{eq:omega_ns}
    \partial_t\omega - \Delta \omega = -A.
\end{equation}
Now, applying the $\curl$-operator to (\ref{eq:relax_b}), we get by $\curl \dive U^{\epsilon,\delta} = -\partial_t \omega^{\epsilon,\delta}$ because of $\curl\nabla p^{\epsilon,\delta}=0$ the damped wave equations 
\begin{equation} \label{eq:omega_ed}
    \partial_t \left( \omega^{\epsilon,\delta} + \delta\partial_t\omega^{\epsilon,\delta} \right) - \Delta \omega^{\epsilon,\delta} =  A^{\epsilon,\delta}, \qquad A^{\epsilon,\delta}:= \curl\dive(u^{\epsilon,\delta}\otimes u^{\epsilon,\delta}).
\end{equation}
In (\ref{eq:omega_ed}), we used $\curl\Delta u^{\epsilon,\delta}=\Delta\curl u^{\epsilon,\delta}=\Delta \omega^{\epsilon,\delta}$.  It follows from a straightforward calculation that
\begin{equation} \label{eq:A_def}
    A^{\epsilon,\delta} = \curl(u^{\epsilon,\delta}\dive u^{\epsilon,\delta}) + u^{\epsilon,\delta}\cdot \nabla \omega^{\epsilon,\delta} - \omega^{\epsilon,\delta}\cdot \nabla u^{\epsilon,\delta} + \omega^{\epsilon,\delta}\dive u^{\epsilon,\delta}.
\end{equation}

For the system (\ref{eq:omega_ed}), we define the energy as
\begin{equation} \label{eq:Fbar}
    \bar F^{\epsilon,\delta} = \bar F^{\epsilon,\delta}(t) = \int_{\mathbb T^3} \left( \left|\omega^{\epsilon,\delta} + \delta\partial_t \omega^{\epsilon,\delta}\right|^2 + \left|\delta\partial_t \omega^{\epsilon,\delta}\right|^2 + 2\delta \left|\nabla \omega^{\epsilon,\delta} \right|^2 \right)\, dx,
\end{equation}
and the modulated energy, introduced in \cite{Brenier2003}, as 
\begin{equation} \label{eq:modulated_energy}
    F^{\epsilon,\delta} =    F^{\epsilon,\delta}(t) = \int_{\mathbb T^3} \left( \left|\omega^{\epsilon,\delta}-\omega + \delta\partial_t\omega^{\epsilon,\delta} \right|^2 +\left|\delta\partial_t\omega^{\epsilon,\delta}\right|^2 + 2\delta \left| \nabla \omega^{\epsilon,\delta} \right|^2 \right) \,dx.
\end{equation}
Let us prove the basic identities for $\bar F^{\epsilon,\delta}$ and $F^{\epsilon,\delta}$.

\begin{proposition}
The energy $\bar F^{\epsilon,\delta}$ and $F^{\epsilon,\delta}$ satisfy
\begin{equation} \label{eq:F0_evolve}
    \frac{d}{dt}\frac{1}{2}\bar F^{\epsilon,\delta}(t) = -\int_{\mathbb T^3} \left( \delta \left|\partial_t \omega^{\epsilon,\delta} \right|^2 + \left| \nabla\omega^{\epsilon,\delta} \right|^2 \right) \, dx -\int_{\mathbb T^3} \left( \omega^{\epsilon,\delta}+2\delta\partial_t \omega^{\epsilon,\delta} \right)\cdot A^{\epsilon,\delta} \, dx
\end{equation}
and
\begin{equation} \label{eq:modu_energy_evolve}
\begin{split}
    \frac{d}{dt}\frac{1}{2}F^{\epsilon,\delta}(t) &= T^{\epsilon,\delta}_\text{I}(t) + T^{\epsilon,\delta}_\text{II}(t) - \int_{\mathbb T^3}\left( \left| \nabla(\omega^{\epsilon,\delta}-\omega) \right|^2 + \delta \left| \partial_t\omega^{\epsilon,\delta} \right|^2 \right)\,dx, \\
    T^{\epsilon,\delta}_\text{I}(t) &:= \int_{\mathbb T^3} \delta\partial_t\omega^{\epsilon,\delta} \cdot\left(A-\Delta\omega-2A^{\epsilon,\delta} \right) \, dx, \\
    T^{\epsilon,\delta}_\text{II}(t) &:= - \int_{\mathbb T^3} (\omega^{\epsilon,\delta}-\omega)\cdot(A^{\epsilon,\delta}-A)\,dx,
\end{split}
\end{equation}
with  $A$ from \eqref{eq:A0_def} and $A^{\epsilon,\delta}$ from \eqref{eq:omega_ed}.
\end{proposition}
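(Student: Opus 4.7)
Both identities are straightforward (if sign-sensitive) energy computations obtained by differentiating the integrands term by term, invoking the damped wave equation \eqref{eq:omega_ed} for $\omega^{\epsilon,\delta}$, the Navier--Stokes vorticity equation \eqref{eq:omega_ns} for $\omega$, and performing one integration by parts to transfer the derivative inside $|\nabla\omega^{\epsilon,\delta}|^2$. The plan is to handle $\bar F^{\epsilon,\delta}$ first and then adapt the computation to the modulated version.

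For \eqref{eq:F0_evolve}, I would take $\frac{d}{dt}\tfrac12 \bar F^{\epsilon,\delta}$ piece by piece, producing $(\omega^{\epsilon,\delta}+\delta\partial_t\omega^{\epsilon,\delta})\cdot\partial_t(\omega^{\epsilon,\delta}+\delta\partial_t\omega^{\epsilon,\delta})$, $\delta\partial_t\omega^{\epsilon,\delta}\cdot \delta\partial_t^2\omega^{\epsilon,\delta}$, and $2\delta\,\nabla\omega^{\epsilon,\delta}:\partial_t\nabla\omega^{\epsilon,\delta}$. Equation \eqref{eq:omega_ed} disposes of the first factor, and writing $\delta\partial_t^2\omega^{\epsilon,\delta}=\partial_t(\omega^{\epsilon,\delta}+\delta\partial_t\omega^{\epsilon,\delta})-\partial_t\omega^{\epsilon,\delta}$ lets \eqref{eq:omega_ed} dispose of the second as well, at the cost of one extra $-\delta|\partial_t\omega^{\epsilon,\delta}|^2$. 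Integration by parts turns $2\delta\int \nabla\omega^{\epsilon,\delta}:\partial_t\nabla\omega^{\epsilon,\delta}\,dx$ into $-2\delta\int \Delta\omega^{\epsilon,\delta}\cdot\partial_t\omega^{\epsilon,\delta}\,dx$, and $\int\omega^{\epsilon,\delta}\cdot\Delta\omega^{\epsilon,\delta}\,dx=-\int|\nabla\omega^{\epsilon,\delta}|^2\,dx$. The crucial observation is that all mixed terms of the form $c\,\delta\partial_t\omega^{\epsilon,\delta}\cdot\Delta\omega^{\epsilon,\delta}$ appear with coefficients summing to $\delta+\delta-2\delta=0$ and cancel, leaving exactly the dissipation $-\int(\delta|\partial_t\omega^{\epsilon,\delta}|^2+|\nabla\omega^{\epsilon,\delta}|^2)\,dx$ plus a forcing term proportional to $(\omega^{\epsilon,\delta}+2\delta\partial_t\omega^{\epsilon,\delta})\cdot A^{\epsilon,\delta}$.

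For the modulated identity \eqref{eq:modu_energy_evolve}, the same strategy applies, except that the first factor is now $\partial_t(\omega^{\epsilon,\delta}-\omega+\delta\partial_t\omega^{\epsilon,\delta})$, which combines the $A^{\epsilon,\delta}$-driven source from \eqref{eq:omega_ed} with $-\partial_t\omega=-\Delta\omega+A$ from \eqref{eq:omega_ns}. Upon expanding $(\omega^{\epsilon,\delta}-\omega+\delta\partial_t\omega^{\epsilon,\delta})\cdot[\Delta(\omega^{\epsilon,\delta}-\omega)+\text{forcing}]$, the piece multiplied by $\omega^{\epsilon,\delta}-\omega$ produces, after integrating by parts against $\Delta(\omega^{\epsilon,\delta}-\omega)$, the modulated dissipation $-\int|\nabla(\omega^{\epsilon,\delta}-\omega)|^2\,dx$ together with the $T^{\epsilon,\delta}_\mathrm{II}$ coupling $(\omega^{\epsilon,\delta}-\omega)\cdot(A^{\epsilon,\delta}-A)$; the piece multiplied by $\delta\partial_t\omega^{\epsilon,\delta}$, combined with the contributions of $|\delta\partial_t\omega^{\epsilon,\delta}|^2$ and $2\delta|\nabla\omega^{\epsilon,\delta}|^2$ exactly as before, once again experiences the $\delta\partial_t\omega^{\epsilon,\delta}\cdot\Delta\omega^{\epsilon,\delta}$ cancellation and leaves the residual forcing $T^{\epsilon,\delta}_\mathrm{I}=\int\delta\partial_t\omega^{\epsilon,\delta}\cdot(A-\Delta\omega-2A^{\epsilon,\delta})\,dx$.

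The only real obstacle is bookkeeping of signs and coefficients: several intermediate terms of the form $\delta\partial_t\omega^{\epsilon,\delta}\cdot\Delta\omega^{\epsilon,\delta}$ and $\delta\partial_t\omega^{\epsilon,\delta}\cdot A^{\epsilon,\delta}$ appear, and they must be tracked carefully in order to yield the clean dissipative right-hand sides of \eqref{eq:F0_evolve} and \eqref{eq:modu_energy_evolve}. The weight $2\delta$ in front of $|\nabla\omega^{\epsilon,\delta}|^2$ in the definitions \eqref{eq:Fbar}, \eqref{eq:modulated_energy} is chosen precisely to enforce this cancellation; this is the structural feature that makes the modulated-energy method effective here, and it will underpin the subsequent bootstrap arguments used in the proofs of Theorems~\ref{thm:critical_3d}, \ref{thm:critical_3d_inhomo} and \ref{thm:critical_2d}.
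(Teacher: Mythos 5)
Your proof is correct and uses essentially the same technique as the paper: energy computations combining the damped wave equation \eqref{eq:omega_ed}, the heat equation \eqref{eq:omega_ns}, and one integration by parts, with the weight $2\delta$ on $|\nabla\omega^{\epsilon,\delta}|^2$ chosen precisely so the mixed terms $\delta\partial_t\omega^{\epsilon,\delta}\cdot\Delta\omega^{\epsilon,\delta}$ cancel. The only organizational difference is in how the bookkeeping is arranged: for \eqref{eq:F0_evolve} the paper multiplies \eqref{eq:omega_ed} by the carefully chosen test function $\omega^{\epsilon,\delta}+2\delta\partial_t\omega^{\epsilon,\delta}$ (rather than differentiating $\bar F^{\epsilon,\delta}$ term by term as you do), and for \eqref{eq:modu_energy_evolve} the paper expands $F^{\epsilon,\delta}=\bar F^{\epsilon,\delta}+\int|\omega|^2-2\int\omega\cdot(\omega^{\epsilon,\delta}+\delta\partial_t\omega^{\epsilon,\delta})$ and then reuses \eqref{eq:F0_evolve}, so the cancellation only has to be observed once, whereas you redo the direct differentiation for $F^{\epsilon,\delta}$; both routes land on the same algebra. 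One word of caution on the sign bookkeeping you flag as "the only real obstacle": taking $\curl\dive$ of \eqref{eq:relax_c} and substituting $\curl\dive U^{\epsilon,\delta}=-\partial_t\omega^{\epsilon,\delta}$ actually gives $\partial_t(\omega^{\epsilon,\delta}+\delta\partial_t\omega^{\epsilon,\delta})-\Delta\omega^{\epsilon,\delta}=-A^{\epsilon,\delta}$, i.e.\ the right-hand side of \eqref{eq:omega_ed} carries a minus sign as printed in the proposition's forcing term; if you plug in the sign as literally displayed in \eqref{eq:omega_ed} you will obtain $+\int(\omega^{\epsilon,\delta}+2\delta\partial_t\omega^{\epsilon,\delta})\cdot A^{\epsilon,\delta}$, inconsistent with \eqref{eq:F0_evolve}.
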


\begin{proof}
First, (\ref{eq:F0_evolve}) can be derived by multiplying (\ref{eq:omega_ed}) with $\omega^{\epsilon,\delta} + 2\delta \partial_t \omega^{\epsilon,\delta}$ and integrating (by parts) over $\mathbb T^3$.

Then, we note that
\[
    F^{\epsilon,\delta}(t) = \bar F^{\epsilon,\delta}(t) + \int_{\mathbb T^3} |\omega|^2\,dx - 2\int_{\mathbb T^3} \omega \cdot \left( \omega^{\epsilon,\delta} +\delta\partial_t \omega^{\epsilon,\delta} \right) \,dx.
\]
By using \eqref{eq:omega_ns} we have
\[
    \frac{d}{dt}\frac{1}{2}\int_{\mathbb T^3}|\omega|^2\,dx = -\int_{\mathbb T^3}|\nabla \omega|^2\,dx - \int_{\mathbb T^3}\omega\cdot A\, dx
\]
and hence
\[
\begin{aligned}
    \frac{d}{dt}\frac{1}{2}F^{\epsilon,\delta}(t) =& \frac{d}{dt}\frac{1}{2}\bar F^{\epsilon,\delta}(t) - \int_{\mathbb T^3}|\nabla \omega|^2\,dx - \int_{\mathbb T^3}\omega\cdot A\, dx \\
    &+ \int_{\mathbb T^3} \left( (A-\Delta\omega) \cdot \left(\omega^{\epsilon,\delta} + \delta \partial_t \omega^{\epsilon,\delta}\right) + \omega \cdot (A^{\epsilon,\delta}-\Delta \omega^{\epsilon,\delta}) \right) \,dx,
\end{aligned}
\]
where (\ref{eq:omega_ed}) and (\ref{eq:omega_ns}) were used. Then, (\ref{eq:modu_energy_evolve}) is derived by substituting (\ref{eq:F0_evolve}) into the above equation, integrating by parts, and rearranging the terms.
\end{proof}

\subsection{Proof of Theorem~\ref{thm:critical_3d} on $\mathbb T^3$} \label{subsec:thm:critical_3d}
The proof is built on a bootstrap argument. To treat the constants more carefully, we denote
\[
    K := 1+\|(p,u,U)\|_{C^0H^3}^2.
\]
In this proof we will use $C$ as a generic positive constant that depends only on $C_0$ in (\ref{ass1}) and (\ref{ass2}) (and the domain $\mathbb T^3$). This point will become clear later.

We introduce the bootstrap assumption that there exist constants $M, \mu>0$, both independent of $\delta$, such that, for sufficiently small $\delta\ll 1$, the following bounds hold for all $t\in[0,T_0]$:
\begin{subequations} \label{eq:critical_assum_1}
\begin{align}
    &{\|u^{\epsilon,\delta}(t,\cdot)\|}_{L^\infty}^2 + {\| \nabla u^{\epsilon,\delta}(t,\cdot)\|}_{L^3}^2 \le M, \label{assump_a} \\
    &\delta^{-\frac{1}{2}}\|\dive u^{\epsilon,\delta}(t,\cdot)\|_{L^2}^2 + \delta^{\frac{1}{2}} {\| \nabla\dive u^{\epsilon,\delta}(t,\cdot)\|}_{L^2}^2 \le \mu, \label{assump_b}\\
    &\|\left(u^{\epsilon,\delta}-u\right)(t,\cdot)\|_{L^2}^2 \le \mu \delta^{\frac{1}{2}}. \label{assump_c}
\end{align}
\end{subequations}
Moreover, we require that there exists a sufficiently large constant $C_1$, independent of $\delta$, such that
\begin{subequations} \label{eq:impose}
\begin{align}
    &M \ge C_1(\delta^{-\frac{1}{2}}F(0)+1) \quad \text{for all} \quad \delta\ll 1, \label{impose_a}\\
    &\mu \le \left[ (K+M)(1+T_0) \right]^{-1}. \label{impose_b}
\end{align}
\end{subequations}

We first show that the assumptions (\ref{eq:critical_assum_1}) and (\ref{eq:impose}) hold at $t=0$. By the initial assumptions (\ref{ass1}), (\ref{ass2}), it is clear that both $\|u^{\epsilon,\delta}_0-u_0\|_{L^\infty}^2$ and $\|\nabla(u^{\epsilon,\delta}_0-u_0)\|_{L^3}^2$, and hence $\|u^{\epsilon,\delta}_0\|_{L^\infty}^2$ and $\|\nabla u^{\epsilon,\delta}_0\|_{L^3}^2$, are of order ${\mathcal O}(1)$. Moreover, (\ref{eq:ass1_c}) and (\ref{eq:ass2_b}) imply for the 
modulated energy (\ref{eq:modulated_energy}) that 
\[ F^{\epsilon,\delta}(0)\le C\delta^{\frac{1}{2}}, \]
because $\delta^2 \|\partial_t \omega^{\epsilon,\delta}(0,\cdot)\|_{L^2}^2 = \delta^2 \|\curl\dive U^{\epsilon,\delta}(0,\cdot)\|_{L^2}^2 \le C\delta^2 \|\dive U^{\epsilon,\delta}(0,\cdot)\|_{H^1}^2 \le C\delta^{\frac{1}{2}}$. Therefore, one can select a $K$-dependent number $M\ge C_1(C+1)$ such that (\ref{assump_a}) and (\ref{impose_a}) hold. On the other hand, the theorem assumptions imply that $\mu={\mathcal O}(\delta)$ for (\ref{assump_b}) and (\ref{assump_c}), which admits sufficiently small $\mu$ to satisfy (\ref{impose_b}) at $t=0$. 

Using the bootstrap assumptions, we perform a priori estimates to recover those assumptions with stricter bounds, which will guarantee the existence of approximate solutions $(u^{\epsilon,\delta},p^{\epsilon,\delta},U^{\epsilon,\delta})$ on $[0,T_0]$, while leading to the desired bounds (\ref{eq:bounds_critical}) at the same time.

For this purpose, we first estimate the time derivative of the modulated energy $F^{\epsilon,\delta}(t)$ by controlling (\ref{eq:modu_energy_evolve}) term by term. We show  at the end of the proof that
\begin{align} 
    |T^{\epsilon,\delta}_\text{I}| &\le \int_{\mathbb T^3} \delta \left|\partial_t\omega^{\epsilon,\delta}\right|^2\, dx + \frac{2}{10} \|\nabla (\omega^{\epsilon,\delta}-\omega)\|_{L^2}^2 + C(K+M)\mu\delta^{\frac{1}{2}}, \label{eq:dFdt_I} \\
    |T^{\epsilon,\delta}_\text{II}| &\le \frac{7}{10}\|\nabla(\omega^{\epsilon,\delta}-\omega)\|_{L^2}^2 + C(K+M)\mu\delta^{\frac{1}{2}} \label{eq:dFdt_II}
\end{align}
hold for sufficiently small $\delta$. Then, (\ref{eq:modu_energy_evolve}) immediately implies
\[
    \frac{d}{dt}F^{\epsilon,\delta}(t) \le C(K+M)\mu\delta^{\frac{1}{2}}
\]
for $\delta\ll 1$. Further by (\ref{eq:impose}), we have
\[
    \sup_{t\in[0,T_0]} F^{\epsilon,\delta}(t) \le \left(\frac{M}{C_1}+C\right)\delta^{\frac{1}{2}}.
\]
By the definition of $F(t)$ in (\ref{eq:modulated_energy}), this would immediately yield the bound (\ref{eq:bounds_critical_curl}) once the bootstrap argument is closed. For now, we conclude 
\begin{equation} \label{eq:curlu_est}
    \left\|\curl(u^{\epsilon,\delta} - u)\right\|_{L^2}^2 + \delta\left\|\curl(u^{\epsilon,\delta} - u)\right\|_{H^1}^2 \le \left(\frac{M}{C_1}+C\right)\delta^{\frac{1}{2}}.
\end{equation}

With $\curl (u^{\epsilon,\delta}-u)$ at hand, we proceed to recover $\dive u^{\epsilon,\delta}$ in (\ref{eq:bounds_critical_div}) and hence $u^{\epsilon,\delta}-u$ in (\ref{eq:bounds_critical_u}). Recall the rescaled residual $W=(q,v,V)$ in (\ref{eq:res_def}). However, here we need to use the initial layer corrections by defining
\begin{subequations} \label{eq:init_layer_correct}
\begin{align}
    V_1(t,\cdot) &:= V(t,\cdot) - e^{-\frac{t}{\delta}}V(0,\cdot), \\
    U_1^{\epsilon,\delta}(t,\cdot) &:= \partial_t U^{\epsilon,\delta}(t,\cdot) - e^{-\frac{t}{\delta}} \partial_t U^{\epsilon,\delta}(0,\cdot). \label{eq:init_correct_U1}
\end{align}
\end{subequations}
Obviously, we have $V_1(0,\cdot)=U_1^{\epsilon,\delta}(0,\cdot)=0$. The variables $W_1:=(v,q,V_1)$ and $(\partial_t u^{\epsilon,\delta},  \partial_t p^{\epsilon,\delta}, U^{\epsilon,\delta}_1)$ are governed by
\begin{equation} \label{eq:init_layer_1}
\left\{
\begin{aligned}
    & \partial_t q + \frac{1}{\sqrt\epsilon} \dive v = -\sqrt\epsilon \partial_t p, \\
    & \partial_t v+ \frac{1}{\sqrt\epsilon}\nabla q + \frac{1}{\sqrt\delta} \dive V_1 = -e^{-\frac{t}{\delta}}\dive ((U^{\epsilon,\delta}-U)(0,\cdot)), \\
    & \partial_t V_1 + \frac{1}{\sqrt\delta} \nabla v = \frac{2uv+v\otimes v}{\sqrt\delta} - \sqrt\delta \partial_t U -\frac{V_1}{\delta},
\end{aligned}
\right.
\end{equation}
with $uv := (u\otimes v+v\otimes u)/2$, and
\begin{equation} \label{eq:init_layer_2}
\left\{
\begin{aligned}
    &\epsilon \partial_t \left(\partial_t p^{\epsilon,\delta}\right) + \dive \left(\partial_t u^{\epsilon,\delta}\right) = 0, \\
    &\partial_t \left(\partial_t u^{\epsilon,\delta}\right) + \nabla \partial_t p^{\epsilon,\delta} + \dive U_1^{\epsilon,\delta} = -e^{-\frac{t}{\delta}}\dive \partial_t U^{\epsilon,\delta}(0,\cdot), \\
    &\delta \partial_t U_1^{\epsilon,\delta} + \nabla \partial_t u^{\epsilon,\delta} = \partial_t \left(u^{\epsilon,\delta}\otimes u^{\epsilon,\delta}\right) - U_1^{\epsilon,\delta},
\end{aligned}
\right.
\end{equation}
respectively. 
Then, an energy is defined as
\begin{equation} \label{eq:energy_crit_e}
    E^{\epsilon,\delta}(t) = \|W_1(t,\cdot)\|_{L^2}^2 + \delta^2 \left\| \left( \partial_t u^{\epsilon,\delta}, \sqrt\epsilon \partial_t p^{\epsilon,\delta}, \sqrt\delta U_1^{\epsilon,\delta} \right) \right\|_{L^2}^2 + \delta^3 \left\| \nabla\partial_t \left( u^{\epsilon,\delta}, \sqrt\epsilon p^{\epsilon,\delta}, \sqrt\delta U^{\epsilon,\delta} \right) \right\|_{L^2}^2.
\end{equation}
The assumptions on the initial conditions (\ref{ass1}) and (\ref{ass2}) imply that
\begin{equation} \label{eq:energy_int_e}
    E^{\epsilon,\delta}(0) = {\mathcal O}(\delta^{\frac{3}{2}}),
\end{equation}
which can be verified by examining (\ref{eq:energy_crit_e}) at $t=0$ term by term. The most important step in the course is to check
\begin{equation}  \label{eq:nabla_dt_U0}
    \|\nabla \partial_t U_0^{\epsilon,\delta}\|_{L^2} = \mathcal O(\delta^{-\frac{5}{4}}),
\end{equation}
since we see from (\ref{eq:relax_c}) that
\[
\begin{aligned}
    \delta \|\nabla \partial_t U_0^{\epsilon,\delta}\|_{L^2} 
    &\le C\left( \| u_0^{\epsilon,\delta}\nabla u_0^{\epsilon,\delta} \|_{L^2} + \|\nabla U_0^{\epsilon,\delta}\|_{L^2} + \|\nabla^2 u_0^{\epsilon,\delta}\|_{L^2} \right) \\
    &\le C\delta^{-\frac{1}{4}},
\end{aligned}
\]
since the initial conditions assure that $\|u_0^{\epsilon,\delta}\|_{L^\infty}$ and $\|\nabla u_0^{\epsilon,\delta}\|_{L^2}$ are both of the order $\mathcal O(1)$.

An energy estimate that will be proven at the end of the proof  yields
\begin{equation} \label{eq:energy_ineq_btstp}
\begin{split}
    \frac{d}{dt}E^{\epsilon,\delta}(t) 
    &\le C(\epsilon+\delta^2)K + C \delta^{-\frac{1}{4}}e^{-\frac{t}{\delta}} \sqrt{E^{\epsilon,\delta}(t)} + C(M+K)E^{\epsilon,\delta}(t) \\
    &\le C\delta^2K + C \delta^{-\frac{1}{4}}e^{-\frac{t}{\delta}} \sqrt{E^{\epsilon,\delta}(t)} + C(M+K)E^{\epsilon,\delta}(t).
\end{split}
\end{equation}
Integrating (\ref{eq:energy_ineq_btstp}), we obtain
\[
   \sup_{0\le s\le t}E^{\epsilon,\delta}(s) \le E^{\epsilon,\delta}(0) + CKt\delta^2 + C\delta^{\frac{3}{4}} \sup_{0\le s\le t} \sqrt{E^{\epsilon,\delta}(s)} + C(M+K) \int_0^t E^{\epsilon,\delta}(s) ds.
\]
Then, by Young's inequality (that $C\delta^{\frac{3}{4}} \sup \sqrt{E^{\epsilon,\delta}(s)}\le \frac{1}{2}  \sup E^{\epsilon,\delta}(s) + 2C^2\delta^{\frac{3}{2}}$), (\ref{eq:energy_int_e}), and Gronwall's inequality, we conclude
\begin{equation} \label{eq:energy_est_btstp}
    E^{\epsilon,\delta}(t) \le C\exp(C(M+K)T_0)\delta^{\frac{3}{2}} =:\mathcal C(C_0, M,K,T_0) \delta^{\frac{3}{2}}
\end{equation}
for all $t\in [0,T_0]$,

As an immediate consequence of (\ref{eq:energy_est_btstp}), the error estimate (\ref{assump_c}) is recovered as
\[
    \| (u^{\epsilon,\delta}-u)(t,\cdot)\|_{L^2}^2 \le \mu \delta^{\frac{1}{2}+r}
\]
with $r\in(0,1)$ and
\[
    \mu = \mathcal C(C_0,M,K,T_0)\delta^{1-r}.
\]
Thus, (\ref{impose_b}) can be satisfied with sufficiently small $\delta\ll 1$. With such $\mu$, (\ref{eq:energy_est_btstp}) further implies that
\[
    \delta^{-\frac{1}{2}} {\|\dive u^{\epsilon,\delta}(t,\cdot)\|}_{L^2}^2 + \delta^{\frac{1}{2}}{ \|\nabla \dive u^{\epsilon,\delta}(t,\cdot)\|}_{L^2}^2 = \mu \delta^r,
\]
which recovers (\ref{assump_b}). The bounds (\ref{eq:bounds_critical_u}) and (\ref{eq:bounds_critical_div}) also follow directly from (\ref{eq:energy_est_btstp}) once the bootstrap argument is closed.

To recover the bootstrap assumption (\ref{assump_a}), recall $v=u^{\epsilon,\delta}-u$ and use (\ref{eq:interp_L3}) to deduce
\[
\begin{aligned}
    \|u^{\epsilon,\delta}(t,\cdot)\|_{L^\infty}^2 
    &\le K + \|v\|_{L^\infty}^2, \\
    \|\nabla u^{\epsilon,\delta}(t,\cdot)\|_{L^3}^2 &\le K + \|\nabla v\|_{L^3}^2 
    \le K + C \|\nabla v\|_{L^2} \|\nabla v\|_{H^1}.
\end{aligned}
\]
Consider the decomposition  $v=\bar v + \tilde v$ with $\bar v=\frac{1}{\text{vol}(\mathbb T^3)}\int_{\mathbb T^3} v(t,x) \,dx  = \frac{1}{\text{vol}(\mathbb T^3)}\int_{\mathbb T^3} v(0,x) \,dx$ because $\frac{d}{dt}\int_{\mathbb T^3} v \,dx=0$ by (\ref{eq:ns}) and (\ref{eq:relax}).
Then the Poincar{\'e} inequality implies that $\|\tilde v\|_{L^2}\le C\|\nabla v\|_{L^2} \le  C\|\nabla^2 v\|_{L^2}$. It follows that
\[
\begin{aligned}
    \|v\|_{L^\infty}^2 \le |\bar v|^2 + \|\tilde v\|_{L^\infty}^2
    &\le  C \left( \|v(0,\cdot)\|_{L^2}^2 + \|\tilde v\|_{H^1} \|\tilde v\|_{H^2} \right) \\
    &\le C \left( \|v(0,\cdot)\|_{L^2}^2 + \|\nabla v\|_{L^2} \|\nabla^2 v\|_{L^2} \right)
\end{aligned}
\]
and hence
\[
\begin{aligned}
    \|u^{\epsilon,\delta}(t,\cdot)\|_{L^\infty}^2 + \|\nabla u^{\epsilon,\delta}(t,\cdot)\|_{L^3}^2 
    &\le 2K + C \left( \|v(0,\cdot)\|_{L^2}^2 + \|\nabla v\|_{L^2} \|\nabla^2 v\|_{L^2} \right) \\
    &\le 2K+ C\delta^{\frac{3}{2}} + C\|\nabla v\|_{L^2} \|\nabla^2 v\|_{L^2}.
\end{aligned}
\]
Thanks to (\ref{eq:hodge}), it holds that $\|\nabla^\ell v\|_{L^2}\le C\left(\|\nabla^{\ell-1}\curl v\|_{L^2} + \|\nabla^{\ell-1}\dive v\|_{L^2}\right)$ for $\ell=1,2$. One sees from (\ref{assump_b}) and (\ref{eq:curlu_est}) that $\|\nabla v\|_{L^2} = {\mathcal O}(\delta^{1/4})$ and $\|\nabla^2 v\|_{L^2}={\mathcal O}(\delta^{-1/4})$, with the constants essentially determined by $\|\curl v\|_{L^2}$ and $\|\nabla\curl v\|_{L^2}$, respectively (both being $\sqrt{M/C_1+C}$). Note that the  ``$\dive u$" counterpart $\mu$ is of order ${\mathcal O}(\delta)$. As a consequence, the product $\|\nabla v\|_{L^2} \|\nabla^2 v\|_{L^2}$ is of ${\mathcal O}(1)$:
\[
    \|u^{\epsilon,\delta}(t,\cdot)\|_{L^\infty}^2 + \|\nabla u^{\epsilon,\delta}(t,\cdot)\|_{L^3}^2 \le 2K + C \left( \frac{M}{C_1}+C \right).
\]
Note that in this expression, $C$ is a generic, sufficiently large constant, dependent only on $C_0$ and $\mathbb T^3$, and can be fixed \textit{a priori}. Thus, if we take
\[
    C_1>2C, \quad
    M>(C^2+2K) \left(\frac{1}{2}-\frac{1}{C_1/C}\right)^{-1}
\]
in the bootstrap assumption (\ref{assump_a}) and (\ref{impose_a}), then we actually get
\[
    \|u^{\epsilon,\delta}(t,\cdot)\|_{L^\infty}^2 + \|\nabla u^{\epsilon,\delta}(t,\cdot)\|_{L^3}^2 \le \frac{M}{2}.
\]
In this way, the bootstrap argument is fully closed, which shows that the solutions $(u^{\epsilon,\delta},p^{\epsilon,\delta},U^{\epsilon,\delta})$ do exist on $[0,T_0]$, and the desired bounds (\ref{eq:bounds_critical}) follow directly from (\ref{eq:curlu_est}) and (\ref{eq:energy_est_btstp}).

To complete the proof, it remains to verify (\ref{eq:dFdt_I}), (\ref{eq:dFdt_II}), and (\ref{eq:energy_ineq_btstp}). 

\textbf{Verification of (\ref{eq:dFdt_I}).} It holds that
\[
      |T^{\epsilon,\delta}_\text{I}| = \left|\int_{\mathbb T^3} \delta \partial_t\omega^{\epsilon,\delta} \cdot \left( A-\Delta \omega - 2A^{\epsilon,\delta} \right)\,dx\right|
    \le \int_{\mathbb T^3} \delta \left|\partial_t\omega^{\epsilon,\delta}\right|^2\, dx + \delta {\| A-\Delta \omega\|}_{L^2}^2 + 2\delta\left\| A^{\epsilon,\delta} \right\|_{L^2}^2,
\]
and
\[
\begin{aligned}
    |A^{\epsilon,\delta}|^2  &\le C \left(|\nabla\dive u^{\epsilon,\delta}|^2 + |\nabla(\omega^{\epsilon,\delta}-\omega)|^2 + |\nabla\omega|^2 \right) |u^{\epsilon,\delta}|^2 \\[1.2ex]
    &\qquad \qquad + C|\omega^{\epsilon,\delta}-\omega|^2 |\nabla u^{\epsilon,\delta}|^2 + C|\omega|^2 |\nabla u^{\epsilon,\delta}|^2,
\end{aligned}
\]
due to (\ref{eq:A_def}). Then we have a control of $\|A^{\epsilon,\delta}\|_{L^2}$ as
\[
\begin{aligned}
    \delta \|A^{\epsilon,\delta}\|_{L^2}^2 &\le CM\mu\delta^{\frac{1}{2}} + CM\delta\|\nabla (\omega^{\epsilon,\delta}-\omega)\|_{L^2}^2 + CM\delta \|\omega\|_{H^1}^2 \\
    &\le C\delta^{\frac{1}{2}}\left( M\mu + MK\delta^{\frac{1}{2}} \right) + \frac{1}{10} \|\nabla (\omega^{\epsilon,\delta}-\omega)\|_{L^2}^2
\end{aligned}
\]
for $\delta\ll 1$. Here, the second inequality results from the facts that $\|\omega\|_{H^1}^2< K$ and $CM\delta\le 1/10$ for sufficiently small $\delta$. For the first inequality, the control of the term 
\[
    \int_{\mathbb T^3} \left(|\nabla\dive u^{\epsilon,\delta}|^2 + |\nabla(\omega^{\epsilon,\delta}-\omega)|^2 + |\nabla\omega|^2 \right) |u^{\epsilon,\delta}|^2\,dx
\]
is obvious, and the control of the rest part $\int_{\mathbb T^3} \left( |\omega^{\epsilon,\delta}-\omega|^2 |\nabla u^{\epsilon,\delta}|^2 + |\omega|^2 |\nabla u^{\epsilon,\delta}|^2 \right) \,dx$ uses (\ref{eq:sob_emb}) and H{\"o}lder's inequality, yielding
\[
\begin{aligned}
    \delta\int_{\mathbb T^3} |\omega^{\epsilon,\delta}-\omega|^2 |\nabla u^{\epsilon,\delta}|^2 \,dx
    &\le C\delta \left\| |\omega^{\epsilon,\delta}-\omega|^2 \right\|_{L^3} \left\| |\nabla u^{\epsilon,\delta}|^2 \right\|_{L^{\frac{3}{2}}} \\
    &=C\delta \|\omega^{\epsilon,\delta}-\omega\|_{L^6}^2 \|\nabla u^{\epsilon,\delta}\|_{L^3}^2
    \le CM\delta \|\nabla(\omega^{\epsilon,\delta}-\omega)\|_{L^2}^2
\end{aligned}
\]
and similarly,
\[
\begin{aligned}
    \delta \int_{\mathbb T^3} |\omega|^2 |\nabla u^{\epsilon,\delta}|^2 \,dx
    &\le C\delta \|\omega\|_{L^6}^2 \|\nabla u^{\epsilon,\delta}\|_{L^3}^2 \le CM\delta \|\omega\|_{H^1}^2.
\end{aligned}
\]
As a consequence, (\ref{eq:dFdt_I}) follows.

\textbf{Verification of (\ref{eq:dFdt_II}).}
To handle $T^{\epsilon,\delta}_\text{II}(t)=-\int_{\mathbb T^3} (\omega^{\epsilon,\delta}-\omega)\cdot(A^{\epsilon,\delta}-A)\,dx$, we resort to the decomposition
\begin{equation} \label{eq:decomp}
\begin{split}
    A^{\epsilon,\delta} - A =& \curl \left( u^{\epsilon,\delta}\dive u^{\epsilon,\delta} \right) + \omega^{\epsilon,\delta} \dive u^{\epsilon,\delta} + u^{\epsilon,\delta} \cdot \nabla (\omega^{\epsilon,\delta} - \omega) \\
    &+ (u^{\epsilon,\delta}-u)\cdot\nabla \omega - \left( \omega^{\epsilon,\delta}\cdot\nabla u^{\epsilon,\delta} - \omega\cdot\nabla u \right),
\end{split}
\end{equation}
which is a direct consequence of (\ref{eq:A0_def}) and (\ref{eq:A_def}). It follows that $|T^{\epsilon,\delta}_\text{II}|\le \sum_{i=1}^5\Pi_i$ with
\[
\begin{aligned}
   \Pi_1 &:=\left | \int_{\mathbb T^3} (\omega^{\epsilon,\delta}-\omega)\cdot \curl\left( u^{\epsilon,\delta}\dive u^{\epsilon,\delta} \right) \, dx\right|
    = \left | \int_{\mathbb T^3} \curl (\omega^{\epsilon,\delta}-\omega)\cdot \left( u^{\epsilon,\delta}\dive u^{\epsilon,\delta} \right) \, dx\right| \\
    &\le \|\curl (\omega^{\epsilon,\delta}-\omega)\|_{L^2} {\| u^{\epsilon,\delta}\dive u^{\epsilon,\delta}\|}_{L^2}
    \le \|\nabla(\omega^{\epsilon,\delta}-\omega)\|_{L^2} \left( \|u^{\epsilon,\delta}\|_{L^\infty} {\| \dive u^{\epsilon,\delta}\|}_{L^2} \right) \\
    &\le \frac{1}{10} \|\nabla(\omega^{\epsilon,\delta}-\omega)\|_{L^2}^2 + \frac{5}{2} \|u^{\epsilon,\delta}\|_{L^\infty}^2{} {\| \dive u^{\epsilon,\delta}\|}_{L^2}^2 \\
    &\le \frac{1}{10} \|\nabla(\omega^{\epsilon,\delta}-\omega)\|_{L^2}^2 + C M\mu\delta^{\frac{1}{2}},
\end{aligned}
\]
where the second equality results from integration by parts. H{\"o}lder's inequality and Young's inequality are used in the second and third line, respectively. Likewise,
\[
\begin{aligned}
   \Pi_2 &:=\left| \int_{\mathbb T^3} (\omega^{\epsilon,\delta}-\omega)\cdot \omega^{\epsilon,\delta}\dive u^{\epsilon,\delta} \,dx \right|
    \le \int_{\mathbb T^3} \left| \omega^{\epsilon,\delta}-\omega \right|^2 |\dive u^{\epsilon,\delta}| \,dx + \int_{\mathbb T^3} \left| \omega^{\epsilon,\delta}-\omega \right| \cdot \left|\omega \dive u^{\epsilon,\delta}\right| \,dx \\
    &\le \|\omega^{\epsilon,\delta}-\omega\|_{L^6} \left(\|\omega^{\epsilon,\delta}-\omega\|_{L^3} + \|\omega\|_{L^3} \right) {\|\dive u^{\epsilon,\delta}\|}_{L^2} \\
    &\le C \|\nabla(\omega^{\epsilon,\delta}-\omega)\|_{L^2} \left(\|\omega^{\epsilon,\delta}-\omega\|_{L^3} + \|\omega\|_{L^3} \right) {\|\dive u^{\epsilon,\delta}\|}_{L^2} \\
    &\le \frac{1}{10}\|\nabla(\omega^{\epsilon,\delta}-\omega)\|_{L^2}^2 + C(M+K)\mu\delta^{\frac{1}{2}}, 
\end{aligned}
\]
where (\ref{eq:sob_emb}) was used in the third line, and the $L^3$-norms are bounded because of 
\[
    \|\omega^{\epsilon,\delta}-\omega\|_{L^3}^2 + \|\omega\|_{L^3}^2 \le \|\omega^{\epsilon,\delta}\|_{L^3}^2 + 2\|\omega\|_{L^3}^2 \le C\left( \|\nabla u^{\epsilon,\delta}\|_{L^3}^2 + \|\omega\|_{H^1}^2 \right) \le C(M+K),
\]
due to (\ref{assump_a}) and the Sobolev embedding $H^1(\mathbb T^3) \hookrightarrow L^3(\mathbb T^3)$. 

The third term is treated as
\[
\begin{aligned}
   \Pi_3 &:= \left| \int_{\mathbb T^3} (\omega^{\epsilon,\delta}-\omega)\cdot \left[ u^{\epsilon,\delta} \cdot \nabla (\omega^{\epsilon,\delta} - \omega) \right] \,dx \right|
    = \left| \int_{\mathbb T^3} u^{\epsilon,\delta}\cdot \nabla \frac{|\omega^{\epsilon,\delta}-\omega|^2}{2} \,dx \right| \\
    &= \left| \int_{\mathbb T^3} \frac{|\omega^{\epsilon,\delta}-\omega|^2}{2} \dive u^{\epsilon,\delta} \,dx\right|,
\end{aligned}
\]
so it can be controlled in the same way as $\text{II}_2$. For the fourth term, we have
\[
\begin{aligned}
   \Pi_4 &:= \left| \int_{\mathbb T^3} (\omega^{\epsilon,\delta}-\omega)\cdot \left[ (u^{\epsilon,\delta}-u)\cdot\nabla \omega \right] \,dx \right|
    \le {\|\omega^{\epsilon,\delta} -\omega\|}_{L^6} \|u^{\epsilon,\delta}-u\|_{L^2} \|\nabla \omega\|_{L^3} \\
    &\le C \|\nabla(\omega^{\epsilon,\delta}-\omega)\|_{L^2} \|u^{\epsilon,\delta}-u\|_{L^2} \|\nabla \omega\|_{L^3} \\
    &\le \frac{1}{10} \|\nabla(\omega^{\epsilon,\delta}-\omega)\|_{L^2}^2 + C \|u^{\epsilon,\delta}-u\|_{L^2}^2 \|\nabla \omega\|_{L^3}^2  \\
    &\le \frac{1}{10} \|\nabla(\omega^{\epsilon,\delta}-\omega)\|_{L^2}^2 + CK\mu\delta^{\frac{1}{2}},
\end{aligned}
\]
by using (\ref{eq:sob_emb}), (\ref{assump_c}) and $\|\nabla \omega\|_{L^3}\le C\|\nabla \omega\|_{H^1}\le C\|u\|_{H^3}\le CK$ in the last line.

The fifth term is split as follows
\[
\begin{aligned}
   \Pi_5&:= \left| \int_{\mathbb T^3} (\omega^{\epsilon,\delta}-\omega)\cdot \left( \omega^{\epsilon,\delta}\cdot\nabla u^{\epsilon,\delta} - \omega\cdot\nabla u \right) \,dx \right| \\
    &\le \left| \int_{\mathbb T^3} (\omega^{\epsilon,\delta}-\omega)\cdot \left[ (\omega^{\epsilon,\delta}-\omega)\cdot \nabla u \right] \,dx \right| + \left| \int_{\mathbb T^3} (\omega^{\epsilon,\delta}-\omega)\cdot \left[ \omega^{\epsilon,\delta} \cdot \nabla (u^{\epsilon,\delta}-u) \right] \,dx \right| \\
    &=:\Pi_{5}^{(1)} +\Pi_{5}^{(2)}.
\end{aligned}
\]
Then, recalling $\omega=\curl u$, $\omega^{\epsilon,\delta}=\curl u^{\epsilon,\delta}$ and integrating by parts, we have 
\[
\begin{aligned}
   \Pi_{5}^{(1)} &= \left| \int_{\mathbb T^3} (u^{\epsilon,\delta}-u)\cdot \curl \left[ (\omega^{\epsilon,\delta}-\omega)\cdot \nabla u \right] \,dx \right| \\
    &\le {\| u^{\epsilon,\delta} - u\|}_{L^2} {\| \curl \left[ (\omega^{\epsilon,\delta}-\omega)\cdot \nabla u \right]\|}_{L^2} \\
    &\le \frac{1}{10} {\| \nabla (\omega^{\epsilon,\delta}-\omega)\|}_{L^2}^2 + \frac{5}{2}C{ \|\nabla u\|}_{L^\infty}^2 {\| u^{\epsilon,\delta} - u\|}_{L^2}^2  \\
    &\le \frac{1}{10} {\| \nabla (\omega^{\epsilon,\delta}-\omega)\|}_{L^2}^2 + CK\mu\delta^{\frac{1}{2}}
\end{aligned}
\]
and (by using $\dive \omega^{\epsilon,\delta}=0$)
\[ 
\begin{aligned}
   \Pi_{5}^{(2)} &= \left| \int_{\mathbb T^3} (u^{\epsilon,\delta}-u) \cdot \left[ \omega^{\epsilon,\delta} \cdot \nabla(\omega^{\epsilon,\delta}-\omega) \right] \,dx \right| \\
    &\le \|u^{\epsilon,\delta}-u\|_{L^3} \|\omega^{\epsilon,\delta}-\omega\|_{L^6} \|\nabla (\omega^{\epsilon,\delta}-\omega)\|_{L^2} + \|u^{\epsilon,\delta}-u\|_{L^2} \|\omega\|_{L^\infty} \|\nabla(\omega^{\epsilon,\delta}-\omega)\|_{L^2}.
\end{aligned}
\]
By (\ref{eq:interp_L3}), it holds that 
\[
\begin{aligned}
    \|u^{\epsilon,\delta}-u\|_{L^3}^2 &\le C\|u^{\epsilon,\delta}-u\|_{L^2}^2 +  C\|u^{\epsilon,\delta}-u\|_{L^2} \left( \|\nabla u^{\epsilon,\delta}\|_{L^2} + \|\nabla u\|_{L^2} \right) \\
    &\le C\mu\delta^{\frac{1}{2}} + C\mu^{\frac{1}{2}}\delta^{\frac{1}{4}}(M^{\frac{1}{2}}+K^{\frac{1}{2}}) \\
    &\le C\mu^{\frac{1}{2}} (M^{\frac{1}{2}}+K^{\frac{1}{2}}) \delta^{\frac{1}{4}} 
\end{aligned}
\]
for $\delta\ll 1$ because of $\|\nabla u^{\epsilon,\delta}\|_{L^2(\mathbb T^3)} \le C(\text{vol}(\mathbb T^3)) \|\nabla u^{\epsilon,\delta}\|_{L^3(\mathbb T^3)}$ in the second line. Hence, we have
\[
\begin{aligned}
   \Pi_{5}^{(2)} &\le C \mu^{\frac{1}{4}} (M+K)^{\frac{1}{4}}\delta^{\frac{1}{8}} \|\nabla (\omega^{\epsilon,\delta}-\omega)\|_{L^2}^2 + \frac{1}{10} \|\nabla(\omega^{\epsilon,\delta}-\omega)\|_{L^2}^2 + CK\mu\delta^{\frac{1}{2}} \\
    &\le \frac{2}{10} \|\nabla(\omega^{\epsilon,\delta}-\omega)\|_{L^2}^2 + CK\mu\delta^{\frac{1}{2}}
\end{aligned}
\]
for $\delta \ll 1$.

Collecting the above estimates for $\Pi_i$ immediately gives (\ref{eq:dFdt_II}).

\textbf{Verification of (\ref{eq:energy_ineq_btstp}).} First, we work with the system (\ref{eq:init_layer_1}) for the initial-layer-corrected variable $W_1$ and use the bootstrap assumption (\ref{assump_a}) to get
\[
\begin{aligned}
    \frac{1}{2}\frac{d}{dt}\|W_1\|_{L^2}^2 
    &= \int_{\mathbb T^3} \left[-(\sqrt\epsilon \partial_tp)q - v\cdot e^{-\frac{t}{\delta}}\dive ((U^{\epsilon,\delta}-U)(0,\cdot)) \right] \,dx \\
    &\quad \ +\int_{\mathbb T^3} \frac{V_1}{\sqrt\delta}:\left( 2uv + v\otimes v - \delta \partial_t U \right) \, dx - \int_{\mathbb T^3} \frac{|V_1|^2}{\delta}\,dx \\
    &\le C \left( \sqrt{\epsilon K} \|q\|_{L^2} + (\|u\|_{L^\infty}^2+\|v\|_{L^\infty}^2) \|v\|_{L^2}^2 + K\delta^2 \right) + e^{-\frac{t}{\delta}}\|v\|_{L^2} \|\dive((U^{\epsilon,\delta}-U)(0,\cdot))\|_{L^2} \\
    &\le C(\epsilon+\delta^2)K + C(M+K)E^{\epsilon,\delta}(t) + C \delta^{-\frac{1}{4}}e^{-\frac{t}{\delta}} \sqrt{E^{\epsilon,\delta}(t)}.
\end{aligned}
\]

Then, the system (\ref{eq:init_layer_2}) for the variable $(\partial_t u^{\epsilon,\delta},  \partial_t p^{\epsilon,\delta}, U^{\epsilon,\delta}_1)$ can be processed as
\[
\begin{aligned}
    \delta^2 \frac{d}{dt} \frac{1}{2} &\| ( \partial_tu^{\epsilon,\delta}, \sqrt\epsilon \partial_tp^{\epsilon,\delta}, \sqrt\delta U_1^{\epsilon,\delta})\|_{L^2}^2 \\
    &= - \delta^2 e^{-\frac{t}{\delta}} \int_{\mathbb T^3} \partial_tu^{\epsilon,\delta} \cdot \dive \partial_t U^{\epsilon,\delta}_0 \,dx  + \delta^2\int_{\mathbb T^3} \partial_t U^{\epsilon,\delta}: \left(  \partial_t (u^{\epsilon,\delta}\otimes u^{\epsilon,\delta}) - \partial_t U^{\epsilon,\delta} \right)  \, dx \\
    &\le e^{-\frac{t}{\delta}} \left(\delta\|\partial_t u^{\epsilon,\delta}\|_{L^2}\right) \left(\delta \| \dive \partial_t U^{\epsilon,\delta}_0 \|_{L^2}\right) +  C\delta^2 \| u^{\epsilon,\delta}\otimes \partial_tu^{\epsilon,\delta}\|_{L^2}^2 \\
    &\le C \delta^{-\frac{1}{4}}e^{-\frac{t}{\delta}} \sqrt{E^{\epsilon,\delta}(t)} + CME^{\epsilon,\delta}(t),
\end{aligned}
\]
where we used (\ref{eq:nabla_dt_U0}) and $\delta^2 {\| u^{\epsilon,\delta}\otimes \partial_t u^{\epsilon,\delta}\|}_{L^2}^2 \le {\| u^{\epsilon,\delta}\|}_{L^\infty}^2 \cdot \delta^2{\| \partial_t u^{\epsilon,\delta}\|}_{L^2}^2 \le ME^{\epsilon,\delta}(t)$ in the last line.

Similarly, letting $\nabla \partial_t$ act on (\ref{eq:relax}), we have
\[
\begin{aligned}
    \delta^3\frac{d}{dt}\frac{1}{2}{\| \nabla\partial_t ( u^{\epsilon,\delta}, \sqrt\epsilon p^{\epsilon,\delta}, \sqrt\delta U^{\epsilon,\delta})\|}_{L^2}^2
    &\le \delta^3{\| \nabla\partial_t (u^{\epsilon,\delta}\otimes u^{\epsilon,\delta})\|}_{L^2}^2 \le C \delta^3 {\| \nabla (u^{\epsilon,\delta}\otimes \partial_tu^{\epsilon,\delta})\|}_{L^2}^2 \\
    &\le C\delta^3 \left({\|\nabla u^{\epsilon,\delta} \otimes \partial_t u^{\epsilon,\delta}\|}_{L^2}^2 +{ \| u^{\epsilon,\delta} \otimes \nabla \partial_t u^{\epsilon,\delta}\|}_{L^2}^2 \right) \\
    &\le C\delta^3 \left( {\|\nabla u^{\epsilon,\delta}\|}_{L^3}^2 {\| \partial_t u^{\epsilon,\delta}\|}_{L^6}^2 +  {\| u^{\epsilon,\delta}\|}_{L^\infty}^2{\| \nabla\partial_t u^{\epsilon,\delta}\|}_{L^2}^2 \right) \\
    &\le CME^{\epsilon,\delta}(t),
\end{aligned}
\]
by using the bootstrap assumption (\ref{assump_a}) and the Sobolev embedding $H^1(\mathbb T^3)\hookrightarrow L^6(\mathbb T^3)$.

With the above three estimates, (\ref{eq:energy_ineq_btstp}) follows immediately. This also completes the whole proof.

\begin{remark}
We point out two important technical features. First, in our current approach, the scaling $\epsilon=\mathcal O(\delta^2)$ plays an essential role in deriving (\ref{eq:energy_int_e}) and (\ref{eq:energy_ineq_btstp}). Second, the implementation of the initial layer correction (\ref{eq:init_layer_correct}) for $U^{\epsilon,\delta}$ allows us to remove the stringent constraint that $U^{\epsilon,\delta}_0$ must be $o(1)$ close to $u_0 \otimes u_0 - \nabla u_0$ and hence stablish convergence for a broader class of initial data.
\end{remark}

\subsection{Proof of Theorem~\ref{thm:critical_3d_inhomo} on $\mathbb T^3$} \label{subsec:thm:critical_3d_inhomo}

The proof uses a similar bootstrap argument to that of Theorem~\ref{thm:critical_3d}. We mainly remark the differences. Denote 
\[
    K=1+\|u\|_{C^0H^3}^2,
\]
and $C$ still denotes a generic positive constant, dependent only on $C_0$ and the domain $\mathbb T^3$, that can be fixed a priori.

Besides (\ref{eq:relation_3d_inhomo}), we introduce additional bootstrap assumptions as
\begin{subequations} \label{eq:assump_ext}
\begin{align}
    &\|u^{\epsilon,\delta}(t,\cdot)\|_{L^\infty}^2 + {\| \nabla u^{\epsilon,\delta}(t,\cdot)\|}_{L^3}^2 \le M=M(C_0, T_0,K), \label{assump_ext_a} \\
    &\delta^{-\frac{1}{2}}\|\dive u^{\epsilon,\delta}(t,\cdot)\|_{L^2}^2 + \delta^{\frac{1}{2}} {\| \nabla\dive u^{\epsilon,\delta}(t,\cdot)\|}_{L^2}^2 \le \frac{1}{K+M}. \label{assump_ext_b}
\end{align}
\end{subequations}
The assumption (i.e., the existence of a sufficiently-large $M$) holds at $t=0$. With (\ref{eq:assump_ext}) and (\ref{eq:relation_3d_inhomo}), we have the following estimate
\begin{equation} \label{eq:v_L6}
\begin{split}
    \| u^{\epsilon,\delta}-u \|_{L^6}^2 &\le C \left| \int_{\mathbb T^3} (u^{\epsilon,\delta}-u) \,dx \right|^2 + C \left( \| \omega^{\epsilon,\delta}-\omega \|_{L^2}^2 + \| \dive u^{\epsilon,\delta} \|_{L^2}^2 \right) \\
    &\le C\delta^{\frac{1}{2}} + CF^{\epsilon,\delta}(t),
\end{split}
\end{equation}
which will be used in the sequel. Indeed, for the first line, we again decompose $v:=u^{\epsilon,\delta}-u = \bar v+\tilde v$ with $\int_{\mathbb T^3} \tilde v \,dx=0$, and apply the Sobolev embedding $H^1(\mathbb T^3)\hookrightarrow L^6(\mathbb T^3)$ and Poincar{\'e} inequality. The second line is a consequence of the fact that $\frac{d}{dt}\int_{\mathbb T^3} v \,dx=0$, the theorem assumption that $\|v(0,\cdot)\|_{L^2}^2\le \delta^{1/2}$ and (\ref{assump_ext_b}). 

For the a priori estimate, we first handle $\curl u^{\epsilon,\delta}$ using the modulated energy $F^{\epsilon,\delta}$ in (\ref{eq:modulated_energy}). It can be shown that (\ref{eq:dFdt_I}) still holds, while (\ref{eq:dFdt_II}) is subject to changes:
\begin{equation} \label{eq:dFdt_II_new}
    |T^{\epsilon,\delta}_\text{II}| \le \frac{5}{10} \|\nabla(\omega^{\epsilon,\delta}-\omega)\|_{L^2}^2 + CK\delta^{\frac{1}{2}} + CKF^{\epsilon,\delta}(t) + C\left(F^{\epsilon,\delta}(t)\right)^3.
\end{equation}
Indeed, to show (\ref{eq:dFdt_II_new}), we still resort to $|T^{\epsilon,\delta}_\text{II}|\le \sum_{i=1}^5\Pi_i$ from the decomposition (\ref{eq:decomp}), with $\Pi_1$, $\Pi_2$ and $\Pi_3$ treated in the same way as in the proof of Theorem~\ref{thm:critical_3d}. The other terms are now estimated as
\[
\begin{aligned}
\Pi_4 &:= \left| \int_{\mathbb T^3} (\omega^{\epsilon,\delta}-\omega)\cdot \left[ (u^{\epsilon,\delta}-u)\cdot\nabla \omega \right] \,dx \right|
    \le {\|\omega^{\epsilon,\delta} -\omega\|}_{L^2} {\|u^{\epsilon,\delta}-u\|}_{L^6} \|\nabla \omega\|_{L^3} \\
    &\le {\|\omega^{\epsilon,\delta} -\omega\|}_{L^2}^2 + {\|u^{\epsilon,\delta}-u\|}_{L^6}^2 \|\nabla \omega\|_{L^3}^2
    \le CKF^{\epsilon,\delta}(t) + CK\delta^{\frac{1}{2}}, \\
    \Pi_5^{(1)} &:= \left| \int_{\mathbb T^3} (\omega^{\epsilon,\delta}-\omega)\cdot \left[ (\omega^{\epsilon,\delta}-\omega)\cdot \nabla u \right] \,dx \right| \le K^{\frac{1}{2}} F^{\epsilon,\delta}(t),
\end{aligned}
\]
and
\[ 
\begin{aligned}
   \Pi_{5}^{(2)} &= \left| \int_{\mathbb T^3} (u^{\epsilon,\delta}-u) \cdot \left[ \omega^{\epsilon,\delta} \cdot \nabla(\omega^{\epsilon,\delta}-\omega) \right] \,dx \right| \\
    &\le \|u^{\epsilon,\delta}-u\|_{L^6} \|\omega^{\epsilon,\delta}-\omega\|_{L^3} \|\nabla (\omega^{\epsilon,\delta}-\omega)\|_{L^2} + \|u^{\epsilon,\delta}-u\|_{L^6} \|\omega\|_{L^3} \|\nabla(\omega^{\epsilon,\delta}-\omega)\|_{L^2}.
\end{aligned}
\]
By $\|\omega^{\epsilon,\delta}-\omega\|_{L^3}^2 \le C \|\omega^{\epsilon,\delta}-\omega\|_{L^2} \|\nabla(\omega^{\epsilon,\delta}-\omega)\|_{L^2}$, it holds that
\[
\begin{aligned}
    \|u^{\epsilon,\delta}-u\|_{L^6} \|\omega^{\epsilon,\delta}-\omega\|_{L^3} &\|\nabla (\omega^{\epsilon,\delta}-\omega)\|_{L^2} \le C\|u^{\epsilon,\delta}-u\|_{L^6} \|\omega^{\epsilon,\delta}-\omega\|_{L^2}^{\frac{1}{2}} \|\nabla(\omega^{\epsilon,\delta}-\omega)\|_{L^2}^{\frac{3}{2}} \\
    &\le \frac{1}{10} \left( \|\nabla(\omega^{\epsilon,\delta}-\omega)\|_{L^2}^{\frac{3}{2}} \right)^{\frac{4}{3}} + C \left( \|u^{\epsilon,\delta}-u\|_{L^6} \|\omega^{\epsilon,\delta}-\omega\|_{L^2}^{\frac{1}{2}}\right)^4 \\
    &\le \frac{1}{10} \|\nabla(\omega^{\epsilon,\delta}-\omega)\|_{L^2}^2 + C\left[(F^{\epsilon,\delta}(t))^2+\delta \right] F^{\epsilon,\delta}(t),
\end{aligned}
\]
and hence
\[
\begin{aligned}
    \Pi_{5}^{(2)} &\le \left[ \frac{1}{10} \|\nabla(\omega^{\epsilon,\delta}-\omega)\|_{L^2}^2 + C(F(t)^2+\delta)F(t) \right] + \frac{1}{10} \|\nabla(\omega^{\epsilon,\delta}-\omega)\|_{L^2}^2 + CK(F(t)+\delta^{\frac{1}{2}}) \\
    &\le \frac{2}{10} \|\nabla(\omega^{\epsilon,\delta}-\omega)\|_{L^2}^2 + C\left(F^{\epsilon,\delta}(t)\right)^3 + CKF^{\epsilon,\delta}(t) + CK\delta^{\frac{1}{2}}.
\end{aligned}
\]
These estimates give (\ref{eq:dFdt_II_new}) immediately. Then we have
\[
    \frac{d}{dt}F^{\epsilon,\delta}(t) \le CK\delta^{\frac{1}{2}} + CKF^{\epsilon,\delta}(t) + C\left(F^{\epsilon,\delta}(t)\right)^3.
\]
Since $F^{\epsilon,\delta}(0)\le C\delta^{1/2}\ll 1$, we conclude from a standard argument that
\begin{equation} \label{eq:}
    F^{\epsilon,\delta}(t)\le \mathcal C(C_0, T_0,K)\delta^{\frac{1}{2}}, \quad t\in[0,T_0]
\end{equation}
for sufficiently small $\delta \ll 1$. Here $\mathcal C(C_0,T_0,K)$ is a constant dependent only on $C_0$, $T_0$ and $K$. By the definition of $F^{\epsilon,\delta}(t)$, this implies (\ref{eq:bounds_critcoro_u}) and (\ref{eq:bounds_critcoro_curl}) once the whole bootstrap argument is closed. Indeed, (\ref{eq:bounds_critcoro_u}) is a consequence of (\ref{eq:bounds_critcoro_curl}) and the bootstrap assumption (\ref{assump_ext_b}).

To prove (\ref{eq:bounds_critcoro_div}) and recover the bootstrap assumptions, define
\begin{equation} \label{eq:energy_crit_inhomo}
    E^{\epsilon,\delta}(t) = \left\| \left(\partial_t u^{\epsilon,\delta}, \sqrt\epsilon \partial_t p^{\epsilon,\delta}, \sqrt\delta  U_1^{\epsilon,\delta}\right) \right\|_{L^2}^2 + \delta \left\| \nabla\partial_t \left(u^{\epsilon,\delta}, \sqrt\epsilon p^{\epsilon,\delta}, \sqrt\delta U^{\epsilon,\delta}\right) \right\|_{L^2}^2
\end{equation}
with $U_1^{\epsilon,\delta}$ given in (\ref{eq:init_correct_U1}) for an initial layer correction. Analogous to the derivation of  (\ref{eq:energy_int_e}) and (\ref{eq:energy_ineq_btstp}), here we have 
\begin{equation} \label{eq:energy_ineq_inhomo}
    \frac{d}{dt}E^{\epsilon,\delta}(t) \le CME^{\epsilon,\delta}(t) + C\delta^{-\frac{5}{4}}e^{-\frac{t}{\delta}} \sqrt{E^{\epsilon,\delta}(t)}
\end{equation}
with
\begin{equation} \label{eq:energy_int_inhomo}
    E^{\epsilon,\delta}(0)=\mathcal O(\delta^{-\frac{1}{2}}).
\end{equation}
We further integrate (\ref{eq:energy_ineq_inhomo}) to obtain
\[
   \sup_{0\le s\le t}E^{\epsilon,\delta}(s) \le E^{\epsilon,\delta}(0) + CM \int_0^t E^{\epsilon,\delta}(s) ds + C\delta^{-\frac{1}{4}} \sup_{0\le s\le t} \sqrt{E^{\epsilon,\delta}(s)}.
\]
Then, by Young's inequality (that $C\delta^{-\frac{1}{4}} \sup \sqrt{E^{\epsilon,\delta}(s)}\le \frac{1}{2} \sup E^{\epsilon,\delta}(s) + 2C^2\delta^{-\frac{1}{2}}$), (\ref{eq:energy_int_inhomo}), and Gronwall's inequality, we have
\begin{equation} \label{eq:energy_bound_inhomo}
    E^{\epsilon,\delta}(t) \le C\exp(CMT_0)\delta^{-\frac{1}{2}}
\end{equation}
for all $t\in [0,T_0]$, which immediately implies (\ref{eq:bounds_critcoro_div}) (due to $\epsilon \partial_t p^{\epsilon,\delta}=-\dive u^{\epsilon,\delta}$). The existence of $M=M(C_0, T_0,K)$ in (\ref{assump_ext_a}) can be guaranteed with a similar argument as in the proof of Theorem~\ref{thm:critical_3d}. Given $C$ and $M$, by choosing sufficiently small
\[
    \mu_*=\mu_*(T_0,K) <\frac{1}{2C(K+M)\exp(CMT_0)},
\]
the assumption (\ref{assump_ext_b}) can be recovered from (\ref{eq:bounds_critcoro_div}). This completes the proof.

\begin{remark}
    Compared with (\ref{eq:critical_assum_1}) in the proof of Theorem~\ref{thm:critical_3d}, here the bootstrap assumption (\ref{eq:assump_ext}) drops the smallness of $\| u^{\epsilon,\delta} - u\|_{L^2}$. Instead, we rely on (\ref{eq:v_L6}) as a compensation.
\end{remark}

\begin{remark} \label{rem:orders}
We close this subsection with a brief discussion on the convergence orders specified in Theorems~\ref{thm:critical_3d} and \ref{thm:critical_3d_inhomo} under the $\mathcal O(1)$ perturbation regime (\ref{eq:O1_Linf}). Denote $v_0=u^{\epsilon,\delta}_0-u_0$. Clearly, to satisfy (\ref{eq:O1_Linf}), we need $\|v_0\|_{H^1}=\mathcal O(\delta^\alpha)$ and $\|v_0\|_{H^2}=\mathcal O(\delta^{-\alpha})$, leaving one free paramter $\alpha\in\mathbb R$ to tune. Since the key tool is the modulated energy $F^{\epsilon,\delta}$ in (\ref{eq:modulated_energy}), it is natural to require all terms in the integrand, in particular $\omega^{\epsilon,\delta}$ and $\sqrt\delta \nabla\omega^{\epsilon,\delta}$, have the same orders in $\delta$; that is, $\alpha = 1/2-\alpha$. Having $\|v_0\|_{H^1}+\sqrt\delta\|v_0\|_{H^2}=\mathcal O(\delta^{1/4})$, we can either set $\|v_0\|_{L^2}=\mathcal O(\delta^{3/4})$ (as in Theorem~\ref{thm:critical_3d}, respecting that the gradient operator $\nabla$ leads to $\mathcal O(\delta^{-1/2})$), or $\|v_0\|_{L^2}=\mathcal O(\delta^{1/4})$ (as in Theorem~\ref{thm:critical_3d_inhomo}, the lowest possible order) as two limiting cases.

It is worth mentioning that, unlike in (\ref{eq:energy_crit_e}), here the energy $E^{\epsilon,\delta}(t)$ in (\ref{eq:energy_crit_inhomo}) does not include the residual function part, because $\|v\|_{L^2}=\mathcal O(\delta^{1/2})$ can already be ensured by the bound of $F^{\epsilon,\delta}$. 
\end{remark}

\subsection{Proof of Theorem~\ref{thm:critical_2d} on $\mathbb T^2$} \label{subsec:thm:critical_2d}

First note that in 2D, the vorticity $\omega=\curl u=\partial_1u_2-\partial_2u_1$ is a scalar. The quantity $A^{\epsilon,\delta}$ in (\ref{eq:omega_ed}) also becomes a scalar:
\begin{equation} \label{eq:A_2d}
    A^{\epsilon,\delta}:=\curl \dive (u^{\epsilon,\delta}\otimes u^{\epsilon,\delta}) = u^{\epsilon,\delta}\cdot \nabla \omega^{\epsilon,\delta}-u^{\epsilon,\delta}\times \nabla \dive u^{\epsilon,\delta} + 2(\dive u^{\epsilon,\delta}) \omega^{\epsilon,\delta},
\end{equation}
which can be verified by a direct calculation. Here it suffices to use the energy $\bar F^{\epsilon,\delta}$ in (\ref{eq:Fbar}), and (\ref{eq:F0_evolve}) still holds for its time evolution.

The proof proceeds with the same bootstrap strategy as in previous sections. Obviously, all involved norms are for $\mathbb T^2$. We will use $C$ as a generic positive constant that depends only on the domain $\mathbb T^2$. 
Denote
\[
    K = 1+\|u\|_{C^0H^2\cap C^1H^1 ([0,T_0]\times \mathbb T^2)}^2.
\]

Besides (\ref{eq:relation_2d}), we now introduce additional bootstrap assumptions
\begin{subequations} \label{eq:assumpt_2d}
\begin{align}
    &\|u^{\epsilon,\delta}(t,\cdot)\|_{L^\infty}^2 \le M=M(C_0,T_0,K), \label{eq:assump_2d_a}\\
    &\|\dive u^{\epsilon,\delta}(t,\cdot)\|_{L^2}^2 + \delta \|\dive u^{\epsilon,\delta}(t,\cdot)\|_{H^1}^2 \le \frac{1}{C^2 M(1+T_0)}. \label{eq:assump_2d_b}
\end{align}
\end{subequations}

Using (\ref{eq:A_2d}) and integration by parts, we obtain
\[
    I:=\int_{\mathbb T^2} (\omega^{\epsilon,\delta} + 2\delta\partial_t\omega^{\epsilon,\delta})A^{\epsilon,\delta}\,dx
    = \int_{\mathbb T^2} \left[ 2\delta\partial_t\omega^{\epsilon,\delta}A^{\epsilon,\delta} + \frac{1}{2}(\dive u^{\epsilon,\delta})(\omega^{\epsilon,\delta})^2 + (\dive u^{\epsilon,\delta}) u^{\epsilon,\delta}\times \nabla \omega^{\epsilon,\delta} \right] \,dx.
\]
Since $\int_{\mathbb T^2} \omega^{\epsilon,\delta}\,dx=0$, we have $\|\omega^{\epsilon,\delta}\|_{L^2(\mathbb T^2)}\le C\|\nabla\omega^{\epsilon,\delta}\|_{L^2(\mathbb T^2)}$ and hence that
\[
\begin{aligned}
    \left| \int_{\mathbb T^2} \frac{1}{2}(\dive u^{\epsilon,\delta})(\omega^{\epsilon,\delta})^2 \,dx \right| &\le C \| \dive u^{\epsilon,\delta} \|_{L^2} \|\omega^{\epsilon,\delta}\|_{L^4}^2 \\
    &\le C \| \dive u^{\epsilon,\delta} \|_{L^2} \|\omega^{\epsilon,\delta}\|_{L^2} \|\omega^{\epsilon,\delta}\|_{H^1}\\
    &\le C \| \dive u^{\epsilon,\delta} \|_{L^2} \|\nabla\omega^{\epsilon,\delta}\|_{L^2}^2,\\
    \left| \int_{\mathbb T^2} (\dive u^{\epsilon,\delta})u^{\epsilon,\delta} \times \nabla \omega^{\epsilon,\delta} \,dx \right| &\le \frac{1}{4}\|\nabla \omega^{\epsilon,\delta}\|_{L^2}^2 + \|u^{\epsilon,\delta}\|_{L^\infty}^2 \|\dive u^{\epsilon,\delta}\|_{L^2}^2,
\end{aligned}
\]
and
\[
\begin{aligned}
    \delta \| A^{\epsilon,\delta} \|_{L^2}^2 &\le 3\delta \| u^{\epsilon,\delta}\|_{L^\infty}^2 \|\nabla\omega^{\epsilon,\delta}\|_{L^2}^2 + 3\delta \|\nabla\dive u^{\epsilon,\delta}\|_{L^2}^2 \|u^{\epsilon,\delta}\|_{L^\infty}^2 + C\delta \|\dive u^{\epsilon,\delta}\|_{L^4}^2 \|\nabla\omega^{\epsilon,\delta}\|_{L^2}^2
\end{aligned}
\]
with
\[
\begin{aligned}
    C\delta\|\dive u^{\epsilon,\delta}\|_{L^4}^2 &\le C^2 \delta^{\frac{1}{2}} \left( \|\dive u^{\epsilon,\delta}\|_{L^2} \cdot\delta^{\frac{1}{2}} \|\dive u^{\epsilon,\delta}\|_{H^1} \right) \\
    &\le \delta^{\frac{1}{2}}\frac{1}{M(1+T_0)}
\end{aligned}
\]
due to (\ref{eq:assump_2d_b}). Then we get
\[
    \left| I \right| \le \int_{\mathbb T^2} |\delta \partial_t\omega^{\epsilon,\delta}|^2\,dx + \left(3\delta M+\delta^{\frac{1}{2}}\frac{1}{M} + \frac{1}{\sqrt M} + \frac{1}{4}\right) \|\nabla\omega^{\epsilon,\delta}\|_{L^2}^2 + \frac{4}{C^2 (1+T_0)}
\]
and hence, by (\ref{eq:F0_evolve}), 
\[
\begin{aligned}
    \frac{d}{dt}\frac{1}{2} \bar F^{\epsilon,\delta}(t) &\le -\int_{\mathbb T^2} \left( \delta \left|\partial_t \omega^{\epsilon,\delta} \right|^2 + \left| \nabla\omega^{\epsilon,\delta} \right|^2 \right) \, dx +|I| \\
    &\le \frac{1}{1+T_0}
\end{aligned}
\]
for sufficiently small $\delta$ and sufficiently large $M$. Moreover, noting again $\partial_t \omega^{\epsilon,\delta}=-\curl \dive U^{\epsilon,\delta}$, by the initial conditions we have
\[
    \bar F^{\epsilon,\delta}(0)\le CC_0.
\]
Therefore, we conclude that
\begin{equation} \label{eq:F0_2d}
    \bar F^{\epsilon,\delta}(t) \le CC_0, \quad t\in[0,T_0],
\end{equation}
which implies (\ref{eq:bounds_crit2d_curl}) once the bootstrap assumption is closed.

Next, we prove the convergence of $u^{\epsilon,\delta}$, using the symmetric hyperbolic structure of (\ref{eq:res}) for $W=(q,v,V)$ defined in (\ref{eq:res_def}). Consider an energy
\[
    E(t) = \|W(t,\cdot)\|_{L^2}^2 \quad \text{with} \quad E(0)\le C\delta
\]
due to the initial conditions. An energy estimate yields (which can be derived similar to (\ref{eq:energy_ineq_btstp}))
\[
    \frac{d}{dt}E(t) \le CE(t) + CK(\epsilon+\delta^2) + \|v\|_{L^4}^4,
\]
where
\[
    \|v\|_{L^4}^4 \le C \|v\|_{L^2}^2 \left( \|v\|_{L^2}^2 + \|\curl v\|_{L^2}^2 + \|\dive v\|_{L^2}^2 \right)
\]
with
\[
    \| \curl v\|_{L^2}^2 \le 2 \left( \|\omega^{\epsilon,\delta}\|_{L^2}^2 + \|\omega\|_{L^2}^2 \right)
    \le 2 \left( 2\bar F^{\epsilon,\delta}(t) + \|\omega\|_{L^2}^2 \right) \le C(C_0+K)
\]
and $\dive v = \dive u$. Using (\ref{eq:assump_2d_b}), we obtain
\[
    \frac{d}{dt}E(t) \le CE(t)^2 + C(C_0+K)E(t) + CK\delta,
\]
which, after a standard argument, leads to
\[
    E(t) \le C e^{CT_0(C_0+K)}(C_0+K)\delta, \quad t\in[0,T_0]
\]
for $\delta< \exp(-CT_0(C_0 + K))$. This immediately gives (\ref{eq:bounds_crit2d_u}).

The last step is to prove the bound for $\dive u^{\epsilon,\delta}$ (\ref{eq:bounds_crit2d_div}) and recover the bootstrap assumption (\ref{eq:assumpt_2d}). The technique is to exploit the symmetric hyperbolic structure of (\ref{eq:relax}). Define
\[
    G^{\epsilon,\delta}(t) = \left \| \partial_t \left( u^{\epsilon,\delta}, \sqrt\epsilon p^{\epsilon,\delta}, \sqrt\delta U^{\epsilon,\delta} \right) \right\|_{L^2}^2 + \delta \left \| \nabla \partial_t \left( u^{\epsilon,\delta}, \sqrt\epsilon p^{\epsilon,\delta}, \sqrt\delta U^{\epsilon,\delta} \right) \right\|_{L^2}^2,
\]
which satisfies
\[
    G^{\epsilon,\delta}(0) \le CK(C_0+K)\delta^{-1},
\]
thanks to the initial conditions. An energy estimate implies that
\[
\begin{aligned}
    \frac{d}{dt}G^{\epsilon,\delta}(t) &\le \| \partial_t (u^{\epsilon,\delta}\otimes u^{\epsilon,\delta}) \|_{L^2}^2 + \delta\| \nabla\partial_t (u^{\epsilon,\delta}\otimes u^{\epsilon,\delta}) \|_{L^2}^2 \\
    &\le C \| u^{\epsilon,\delta} \|_{L^\infty}^2 G^{\epsilon,\delta}(t) + C\delta \|\nabla u^{\epsilon,\delta}\|_{L^4}^2 \|\partial_t u^{\epsilon,\delta}\|_{L^4}^2.
\end{aligned}
\]
The $L^4(\mathbb T^2)$-norms are further estimated as
\[
    \|\partial_t u^{\epsilon,\delta}\|_{L^4}^2 \le C \|\partial_t u^{\epsilon,\delta}\|_{L^2} \|\partial_t u^{\epsilon,\delta}\|_{H^1} \le C\delta^{-\frac{1}{2}} G^{\epsilon,\delta}(t)
\]
and
\[
    \|\nabla u^{\epsilon,\delta}\|_{L^4}^2 \le C \left( \|\dive u^{\epsilon,\delta}\|_{L^2} + \|\omega^{\epsilon,\delta}\|_{L^2} \right) \left( \|\dive u^{\epsilon,\delta}\|_{H^1} + \|\omega^{\epsilon,\delta}\|_{H^1} \right),
\]
where
\[
    \|\omega^{\epsilon,\delta}\|_{L^2}^2 + \delta \|\omega^{\epsilon,\delta}\|_{H^1}^2 \le C \bar F^{\epsilon,\delta}(t) \le CC_0,
\]
thanks to the definition of $\bar F^{\epsilon,\delta}(t)$ (\ref{eq:Fbar}) and (\ref{eq:F0_2d}). Further using bootstrap assumption (\ref{eq:assumpt_2d}), we have
\[
    \frac{d}{dt}G^{\epsilon,\delta}(t) \le C(C_0+M)G^{\epsilon,\delta}(t)
\]
and thus conclude
\[
    G^{\epsilon,\delta}(t) \le CK(C_0+K) e^{C(C_0+M)T_0}\delta^{-1}, \quad t\in[0,T_0].
\]
Noting that
\begin{equation} \label{eq:2d_G_use}
    \|\dive u^{\epsilon,\delta}\|_{L^2}^2 + \delta \|\nabla \dive u^{\epsilon,\delta}\|_{L^2}^2 \le \epsilon G^{\epsilon,\delta}(t),
\end{equation}
we then obtain the desired bound for $\dive u^{\epsilon,\delta}$ in (\ref{eq:bounds_crit2d_div}). To recover (\ref{eq:assump_2d_a}), it holds that (recalling $v=u^{\epsilon,\delta}-u$)
\[
\begin{aligned}
    \|u^{\epsilon,\delta}\|_{L^\infty}^2 &\le \|u\|_{L^\infty}^2 + C\|v\|_{L^2}\|v\|_{H^2} \\
    &\le \|u\|_{L^\infty}^2 + C\|v\|_{L^2} \left( \|v\|_{L^2} + \|\curl v\|_{H^1} + \|\dive u^{\epsilon,\delta}\|_{H^1} \right).
\end{aligned}
\]
Here, we bound $\|v\|_{L^2} \le \sqrt{E(t)}\le \sqrt{M(C_0+K)\delta/C}$ by imposing $M\ge C^2 \exp{(CT_0(C_0+K))}$. With $\|\curl v\|_{L^2}=\mathcal O(1)$, the term $\|\curl v\|_{H^1}$ is dominated by $\|\nabla \curl v\|_{L^2}$ and further by $\|\nabla \omega^{\epsilon,\delta}\|_{L^2} \le \sqrt{\bar F^{\epsilon,\delta}(t)\delta^{-1}} \le \sqrt{CC_0\delta^{-1}}$. On the other hand, (\ref{eq:assump_2d_b}) implies that $\|\dive u^{\epsilon,\delta}\|_{H^1} \le (C\sqrt M)^{-1}\delta^{-1/2}$, with the coefficients $(C\sqrt M)^{-1}$ much smaller than $\sqrt{CC_0}$. Essentially, these estimates lead to
\[
    \|u^{\epsilon,\delta}\|_{L^\infty}^2 \le K+ C\sqrt{\frac{M(C_0+K)}{C}} \sqrt{CC_0} \le 2C \sqrt{C_0M(C_0+K)}
\]
if $M>K^2/(C^2C_0(C_0+K))$. Hence, if we take 
\[
    M=M(C_0,T_0, K) = C^2\max \left\{ 16C_0(C_0+K), \, \exp{(CT_0(C_0+K))}, \, \frac{K^2}{C^4C_0(C_0+K)} \right \}, 
\]
then this ensures $\|u^{\epsilon,\delta}\|_{L^\infty}^2 \le M/2$, a stricter form of (\ref{eq:assump_2d_a}). Finally, with such choice of $M$, we see from (\ref{eq:2d_G_use}) that if a sufficiently small 
\[
    \mu_*(C_0, T_0,K) = \left[ 2C^3MK(C_0+K)(1+T_0)\right]^{-1}e^{-C(C_0+M)T_0} 
\]
is picked, a stricter form of (\ref{eq:assump_2d_b}) is recovered. This closes the bootstrap assumption and hence completes the proof. 

\begin{remark} \label{rem:orders_2d}
In the same spirit as Remark~\ref{rem:orders}, we discuss the convergence orders in Theorem~\ref{thm:critical_2d}. Note that on $\mathbb T^2$, it holds that (see, e.g., Eq. (2.1c) in \cite{HRYZ25})
\[
    \|v_0\|_{L^\infty} \le C \|v_0\|_{L^2} + C \|v_0\|_{L^2}^{\gamma/2} \|\nabla v_0\|_{L^2}^{1-\gamma} \|\nabla^2v_0\|_{L^2}^{\gamma/2}, \quad \forall \ \gamma\in(0,1].
\]
Then, if we assume $\|v_0\|_{L^2}=\mathcal O(\delta^\alpha)$, $\|\nabla v_0\|_{L^2}=\mathcal O(\delta^{1/2+\beta})$ and $\|\nabla^2v_0\|_{L^2}=\mathcal O(\delta^\beta)$ to respect the (modulated) energy structure in (\ref{eq:Fbar}) and (\ref{eq:modulated_energy}), an (overdetermined) problem follows:
\[
\left\{
\begin{aligned}
    &\min\{\alpha, \ \frac{\gamma}{2}\alpha + (1-\gamma)(\frac{1}{2}+\beta)+\frac{\gamma}{2}\beta \}=0, \quad \forall \ \gamma\in(0,1], \\
    & \alpha \ge \frac{1}{2}+\beta.
\end{aligned}
\right.
\]
It is not difficult to show that this problem admits the only solution $\alpha=1/2$ and $\beta=-1/2$.
\end{remark}

\section{Conclusions} \label{sec:concl}

In this paper, we have provided a comprehensive convergence analysis for a two-parameter hyperbolic relaxation system (\ref{eq:relax}) approximating the incompressible Navier-Stokes equations (\ref{eq:ns}) on $\mathbb{T}^d$ ($d=2,3$). For the three-dimensional case $\mathbb T^3$, we proved the simultaneous convergence of both velocity and pressure under $o(1)$ initial velocity perturbations. This was achieved by introducing an intermediate affine system, which successfully decoupled the errors between the approximation and the limit systems, especially for the pressure variable. Furthermore, for $\mathcal O(1)$ initial velocity perturbations, we demonstrated that the velocity field tracks the Navier-Stokes trajectory over its entire lifespan. The convergence is realized within a specific regime where $\epsilon = o(\delta)$ and the initial velocity divergence is small compared to the curl, ensured by a bootstrap argument that preserves this structural hierarchy. Notably, our analysis utilizes a modulated energy functional to overcome the limitations of standard symmetric hyperbolic estimates. We also implement an initial layer correction to handle the stress-like tensor $U^{\epsilon,\delta}$ in the energy estimates to relax the constraints on initial data.

Future research may address the challenge of pressure convergence under $\mathcal O(1)$ perturbations and explore the behavior of the system in more general parameter regimes. Numerical simulations based on (\ref{eq:relax}) also remain a promising direction.

\section*{Acknowledgments}
Funding by the Deutsche Forschungsgemeinschaft (DFG, German Research Foundation) - SPP 2410 \textit{Hyperbolic Balance Laws in Fluid Mechanics: Complexity, Scales, Randomness (CoScaRa)} is gratefully acknowledged.

\bibliographystyle{amsplain}
\bibliography{references}

\end{document}